\newcommand\PSL{\mathrm{PSL}}
\newcommand\SL{\mathrm{SL}}
\newcommand\SO{\mathrm{SO}}
\newcommand\PSU{\mathrm{PSU}}
\newcommand\dd{\mathrm{d}}
\newcommand\pl{\mathrm{pl}}
\newcommand{\p}[2]{\frac{\partial #1}{\partial #2}}
\newcommand\vol{\mathop{\mathrm{vol}}}
\newcommand\Ei{\mathop{\mathrm{Ei}}}
\newcommand\hol{\mathop{\mathrm{hol}}}
\newcommand\supp{\mathop{\mathrm{supp}}}
\newcommand\sump{\sideset{}{^P}\sum}
\newcommand\sgn{\text{sgn}}
\theoremstyle{plain}
\newtheorem{theorem}{Theorem}[section]
\newtheorem{corr}[theorem]{Corollary}
\newtheorem{lemma}[theorem]{Lemma}
\newtheorem{prop}[theorem]{Proposition}
\theoremstyle{definition}
\newtheorem{remark}[theorem]{Remark}
\definecolor{couleur_cite}{rgb}{0.05,.4,0.05}
\definecolor{couleur_link}{rgb}{0.05,0.05,0.4}
\definecolor{couleur_url}{rgb}{0.5,0,0}
\title[Ambient prime geodesic theorems on hyperbolic 3-manifolds]{Ambient prime geodesic theorems\\ on hyperbolic 3-manifolds}
\author{Lindsay Dever}
\address{Bryn Mawr College, Department of Mathematics, 101 North Merion Avenue, Bryn Mawr, PA 19010, USA}
\email{lmdever@brynmawr.edu}
\author{Djordje Mili\'cevi\'c}
\address{Max-Planck-Institut f\"ur Mathematik, Vivatsgasse 7, D-53111 Bonn, Germany}
\email{dmilicev@mpim-bonn.mpg.de}
\address{Bryn Mawr College, Department of Mathematics, 101 North Merion Avenue, Bryn Mawr, PA 19010, USA}
\email{dmilicevic@brynmawr.edu}
\thanks{D.M. was supported by National Science Foundation Grant DMS-1903301.}
\date{}						
\begin{document}

\begin{abstract}
	We prove prime geodesic theorems counting primitive closed geodesics on a compact hyperbolic 3-manifold with length and holonomy in prescribed intervals, which are allowed to shrink.
	Our results imply effective equidistribution of holonomy and have both the rate of shrinking and the strength of the error term fully symmetric in length and holonomy.
	
\end{abstract}

\subjclass[2010]{Primary 11F72; Secondary 53C22, 53C29, 58J50}
\keywords{Prime geodesic theorem, holonomy, hyperbolic 3-manifold, spectral geometry, trace formula}

\maketitle

\section{Introduction}

Closed geodesics on a smooth and connected Riemannian manifold $M$ act as important geometric and dynamical invariants. Closed geodesics support periodic orbits of the geodesic flow and in turn its invariant measures, 
while the length of the shortest closed geodesic of $M$ (its systole) acts as the first threshold of global geometry and dynamics. On locally symmetric spaces, the trace formula connects closed geodesics to the spectrum~of the Laplacian (which quantizes the geodesic flow), just as
elliptic elements of Fuchsian groups enter 
dimensions of spaces of cusp forms.
 In arithmetic cases, lengths and multiplicities of geodesics can often be explicitly related to invariants such as class numbers and regulators of indefinite binary quadratic forms. Hence for many reasons one seeks to understand the set of closed geodesics on $M$, and in particular, as in this paper, its size and structure.

\subsection{Prime geodesic theorems and holonomy}

We will be concerned with compact hyperbolic 3-manifolds $M$ arising as the quotient $M\simeq\Gamma\backslash\mathbb{H}^3$ of the hyperbolic upper half-3-space by a uniform, torsion-free lattice $\Gamma\subseteq G=\mathrm{PSL}_2\mathbb{C}$. Closed geodesics $C_{\gamma}$ on $M$ arise from non-identity conjugacy classes $[\gamma]$ in $\Gamma$, with primitive classes corresponding to infinitely many \emph{prime} geodesics of increasing lengths $\ell(\gamma)\to\infty$. The celebrated Prime Geodesic Theorem, in the form with an explicit error term, is due to Sarnak~\cite[Theorem 5.1]{Sarnak1983} and may be stated as
\begin{equation}
\label{PGT}
\pi_{\Gamma}(x):=\big|\big\{[\gamma]\text{ primitive in }\Gamma:\ell(\gamma)\leqslant x\big\}\big|=\Ei\nolimits_{\Gamma}(x)+\mathrm{O}_{\Gamma,\epsilon}\big(e^{(\frac53+\epsilon)x}\big),
\end{equation}
where the main term $\Ei_{\Gamma}(x)$ may be defined in terms of an absolutely continuous density measure $\dd\varpi_{\Gamma}$ as
\begin{equation}
\label{ei-varpi}
\Ei\nolimits_{\Gamma}(x)=\int_2^x \dd\varpi_{\Gamma}(t),\quad \dd\varpi_{\Gamma}(t)=\bigg(\frac{e^{2t}}t+\smash[t]{\sum_{j=1}^k}\frac{e^{(1+\nu_j)t}}t\bigg)\,\dd t,
\end{equation}
and $\{0<\nu_k\leqslant\dots\leqslant \nu_1<1\}$ correspond to the eigenvalues $1-\nu_j^2$ of the hyperbolic Laplacian $\Delta$ on $M$ in $(0,1)$ as described in \S\ref{background_sec}. The set of $\{\nu_j\}$ depends on $\Gamma$ only and is predicted by Selberg's conjecture to be empty for arithmetic $\Gamma$. In any case, $\Ei_{\Gamma}(x)\sim_{\Gamma}\Ei(2x)=\int_2^{2x} e^{t} /t \, \dd t\sim e^{2x}/2x$, recovering the general asymptotics of \cite[Theorems 3, 4]{Margulis1969} and \cite[Proposition 5.4]{GangolliWarner1980}. Moreover, \eqref{PGT} gives a power-saving asymptotic for $\pi_{\Gamma}(x,x+h):=\pi_{\Gamma}(x+h)-\pi_{\Gamma}(x)$ as long as $h\gg e^{-(1/3-\delta)x}$.

Attached to each closed geodesic $C_{\gamma}$ is the geometric action of the associated class $[\gamma]$. For
$\gamma\!\sim\!\left(\begin{smallmatrix} e^{(\ell+i\theta)/2}&0\\0&e^{-(\ell+i\theta)/2}\end{smallmatrix}\right)$ with $\ell>0$ (recall that $\Gamma$ is uniform), this is given by a shift by $\ell=\ell(\gamma)$ along $C_{\gamma}$ and a rotation around $C_{\gamma}$ by the angle $\theta$ (which corresponds to parallel transport along $C_{\gamma}$). Thus each $C_{\gamma}$ carries two geometric invariants: the length $\ell(\gamma)$ and the \emph{holonomy} $\hol(\gamma):= \theta \in\mathbb{R}/2\pi\mathbb{Z}$; in this setting of 3-manifolds one often also talks about the \emph{complex length} $\mathbb{C}\ell(\gamma):=\ell(\gamma)+i\hol(\gamma)$.
 It becomes a natural counting question to refine the count \eqref{PGT} according to holonomy. The equidistribution theorem of Sarnak--Wakayama~ \cite[Theorem~1, Corollary 1]{SarWa} in the present case of a compact 3-manifold $M\simeq\Gamma\backslash\mathbb{H}^3$ states that, for every interval $J\subseteq\mathbb{R}/2\pi\mathbb{Z}$,
\begin{equation}
\label{SarnakWakayamaResult}
\pi_{\Gamma}(x;J)=\big|\big\{[\gamma]\text{ primitive in }\Gamma:\ell(\gamma)\in [0,x],\,\hol(\gamma)\in J\big\}\big|\sim_{\Gamma}\frac{|J|}{2\pi}\pi_{\Gamma}(x)\quad (x\to\infty).
\end{equation}

We will be interested in asymptotics with precise control on $\hol(\gamma)$, the ``compact part of the complex length'', in the same way as the sharp-cutoff in $\ell(\gamma)$ in the Prime Geodesic Theorem \eqref{PGT}. In particular, our Theorem \ref{sharpsharp} proves the following effective version of the equidistribution result \eqref{SarnakWakayamaResult}:
\begin{equation}
\label{SW-effective}
\pi_{\Gamma}(x;J)=\frac{|J|}{2\pi}\pi_{\Gamma}(x)+\mathrm{O}_{\Gamma}\big(e^{5x/3}\big),
\end{equation}
with the implied constant \emph{independent of $J$}. We also remark that the error term in \eqref{SW-effective} is a pure exponential. Using dynamical methods, Margulis, Mohammadi, and Oh~\cite[Theorem 1.3]{MargulisMohammadiOh2014,MargulisMohammadiOh2014a} proved an asymptotic for $\pi_{\Gamma}(x;J)$ with a small, unspecified power savings in \eqref{SarnakWakayamaResult}, for a broad class of geometrically finite, Zariski dense $\Gamma<\PSL_2(\mathbb{C})$, including all lattices.

The asymptotic \eqref{SW-effective} provides power savings for the refined count with holonomy in an interval of length $|J|\gg e^{(-\frac13+\delta)x}$ and is reminiscent of a prime number theorem in arithmetic progressions with explicit level dependence. Indeed, we deduce it from the following uniform estimate on ``holonomy character sums'' (see Proposition~\ref{holonomy-sums-prop}):
\begin{equation}
\label{hol-char-sum-intro}
K_n(x):=\sum_{[\gamma]\text{ primitive in }\Gamma:{\ell(\gamma)\leqslant x}}e^{in\hol(\gamma)}\ll_{\Gamma} x^{-1} e^{5x/3} + n^2 e^x\quad(n\neq 0),
\end{equation}
where the implied constant depends only on $\Gamma$. The decomposition into pure harmonics of  holonomy in $\mathbb{R}/2\pi\mathbb{Z}$ implicit in the passage between \eqref{SW-effective} and \eqref{hol-char-sum-intro} (which was also the key implement in \cite{SarWa}) gives analytic access to automorphic constituents of $L^2(\Gamma\backslash G)$ via the trace formula; see \S\ref{spec-geom-sec}. Substantial cancellation in holonomy character sums in \eqref{hol-char-sum-intro}, which quickly leads to \eqref{SW-effective}, is in this sense analogous to classical bounds on sums of Dirichlet characters with explicit conductor dependence.

Theorem \ref{sharpsharp} requires fine control in both $\ell(\gamma)$ and $\hol(\gamma)$. More broadly, we argue that, for many purposes including counting in short ranges, the two geometric parameters $\ell(\gamma)$ and $\hol(\gamma)$ have the same standing, and that it is most natural to talk about the \emph{joint distribution} of the pair $(\ell(\gamma),\hol(\gamma))$. Such a result might be called an \emph{ambient prime geodesic theorem}. In Theorem~\ref{short-range-ambient-theorem}, we obtain our main result, an asymptotic count for primitive closed geodesics on $M$ according to the pair $(\ell(\gamma),\hol(\gamma))$.

\begin{theorem}\label{JointSmallIntervals}
	Let $\Gamma<\PSL_2\mathbb{C}$ be a discrete, co-compact, torsion-free subgroup. Then, for any intervals $I \subseteq [0,x]$ and $J\subseteq\mathbb{R}/2\pi\mathbb{Z}$,
	\begin{align*}
		\pi_{\Gamma}(I,J)
		&:=\big|\big\{[\gamma]\textnormal{ primitive in }\Gamma:(\ell(\gamma),\hol(\gamma))\in I\times J\big\}\big|\\
		&=\iint_{I\times J}\dd\varpi_{\Gamma}(t)\,\frac{\dd\theta}{2\pi}+\mathrm{O}_{\Gamma}\Big( (|I| + |J|)^{2/3} \frac{e^{5x/3}}{x^{2/3}} + \frac{e^{3x/2}}{x^{1/2}}\Big).
	\end{align*}
\end{theorem}

In particular, Theorem~\ref{JointSmallIntervals} proves the uniform asymptotic (see Theorem~\ref{sharpsharp} and its Corollary~\ref{ambient-short-range})
\begin{equation}
\label{Total-PGT}
\pi_{\Gamma}(I,J)
=\iint_{I\times J}\dd\varpi_{\Gamma}(t)\,\frac{\dd\theta}{2\pi}+\mathrm{O}_{\Gamma}\big(e^{5 x/3}\big),
\end{equation}
which provides a power savings as long as $|I\times J| \gg e^{(-\frac13+\delta)x}$ with $\delta>0$, where each interval may be short independently of each other; see Remark~\ref{shrinking-intervals-remark}. Theorem~\ref{JointSmallIntervals} further extends this range when both intervals $I$ and $J$ are short, down to as short as $|I\times J|\gg e^{(-\frac12+\delta)x}$ when $|I|\asymp |J|$; see Remark~\ref{concluding-remark} for details.

Joint distribution results such as our Theorem~\ref{JointSmallIntervals} may be seen as instances of spectral geometry on the group quotient $\Gamma\backslash G$, as we explain in \S\ref{spec-geom-sec}. We emphasize that our results apply regardless of whether the subgroup $\Gamma$ is arithmetic  or not; for asymptotics on arithmetic quotients in the length aspect, we refer to  \cite{SoundararajanYoung2013} as well as to recent advances on arithmetic hyperbolic 3-manifolds \cite{BalkanovaChatzakosCherubiniFrolenkovLaaksonen2019,BalkanovaFrolenkov2020a,BalogBiroCherubiniLaaksonen2020}. While it would certainly be of interest to obtain stronger error terms in ambient prime geodesic theorems on arithmetic hyperbolic 3-manifolds, our goal here 
is to establish universal, baseline results.

\subsection{Spectral geometry of \texorpdfstring{$\Gamma\backslash G$}{Gamma\textbackslash G}, trace formulas, and ambient counting} 
\label{spec-geom-sec}
Closed geodesics on $M$ are often considered along with the spectrum of the Laplace--Beltrami operator $\Delta_M$, which we recall involves averages over infinitesimal balls in $M$ and thus naturally quantizes the dynamics of the geodesic flow on $M$. On a rank one compact locally symmetric space $M=\Gamma\backslash S$, the classical spherical trace formula relates the eigenvalues of $\Delta_M$ (that is, frequencies of $\Delta_S$ appearing in $L^2(\Gamma\backslash S)$) with lengths of geodesics corresponding to non-identity conjugacy classes in $\Gamma$. This may also be  seen as the correspondence principle of quantum mechanics, relating long-term dynamics on $M$ with the semi-classical (high-energy) limit of the quantized system on $L^2(M)$, or as a non-commutative version of Fourier duality.

A key question of spectral geometry is whether isospectral manifolds (having the same Laplacian spectrum) are also isometric. For hyperbolic 3-manifolds, the answer is ``No'': in 1980, Vign\'eras found a pair of hyperbolic 3-manifolds that are isospectral but not isometric \cite{Vigneras}; see also \cite{Sunada, GarReid}. However,  Gangolli showed that for a compact hyperbolic 3-manifold, the Laplacian spectrum determines the set of lengths of closed geodesics \cite{Gangolli}. While for hyperbolic surfaces the Laplacian spectrum also determines the multiplicities of closed geodesics, in higher dimensions this is an open problem \cite[Remark 0.3]{Kelmer2014}. In the converse direction, Kelmer showed that the length spectrum (including multiplicities) determines the Laplacian spectrum for compact hyperbolic manifolds \cite{Kelmer2014}. 
 For arithmetic hyperbolic 3-manifolds, the complex length spectrum (and in fact, the rational length set) determines the commensurability class \cite{Reid, Chinbergetal}.
 
In any case, \emph{all structure} encoded in the trace formula should be reflected in the spectral-to-geometric correspondence. In the present case of $M\simeq\Gamma\backslash\PSL_2\mathbb{C}/\mathrm{PSU}_2$,
regular conjugacy classes $[\gamma]$ are parametrized by
$t_{\gamma}\in T/S_2=\{t=\big(\begin{smallmatrix} z&0\\0&z^{-1}\end{smallmatrix}\big):z\in\mathbb{C}^{\times}\}/(t\sim t^{-1})$,
$L^2_0(\Gamma\backslash\PSL_2\mathbb{C})$
is spanned by principal series representations $\pi=\mathrm{ind}_T^G(\chi_{\nu,p})$ indexed by unitary characters $\chi_{\nu,p}:T\to S^1$, and the trace formulas on
$\Gamma\backslash\PSL_2\mathbb{C}$
(Theorems~\ref{LL}, \ref{EvenTrace}, and \ref{OddTrace}, below) relate roughly,  up to fixed smooth weights,
\begin{equation}
\label{trace-formula-schematic}
\sum\nolimits_{\pi_{\nu,p}\subseteq L^2(\Gamma\backslash G)}\hat{F}(\chi_{\nu,p}^{-1})+
\dots\quad\leftrightsquigarrow\quad\sum\nolimits_{[\gamma]\subset\Gamma} F(t_{\gamma})+\dots,
\end{equation}
for a compactly supported smooth function $F:T/S_2\to\mathbb{C}$ and its Abel transform $\hat{F}:T^{\ast}/S_2\to\mathbb{C}$; see \S\ref{trace-formula-statement-sec} for details. Our guiding principle, then, is to view the spectral geometry of $\Gamma\backslash\mathbb{H}^3$ as duality between the classical spherical Maass forms and counts such as the classical Prime Geodesic Theorem \eqref{PGT}, essentially specializing \eqref{trace-formula-schematic} to $(\PSU_2 \cap T)$-invariant test functions, and the spectral geometry of $\Gamma\backslash G$ as 
the full duality, as encoded by \eqref{trace-formula-schematic}, between the entire spectrum of all $\PSU_2$-types and ambient prime geodesic theorems such as \eqref{SW-effective} and our Theorem~\ref{JointSmallIntervals}.

As an imperfect but convincing analogy, consider a higher rank real symmetric space, say a compact quotient $\Gamma\backslash G/K=\Gamma \backslash \SL_n\mathbb{R}/\SO_n$, with rank $r=n-1$, where $L^2_0(\Gamma\backslash G/K)$ decomposes into a direct sum of principal series representations induced from a character $\nu$ in the dual of the Lie algebra of the maximal torus $\mathfrak{a}^{\ast}\simeq\mathbb{R}^r$. Weyl's law is a classical spectral count, central for quantum mechanics or thermodynamics, of eigenforms in $L^2(\Gamma\backslash G/K)$ with  Laplace eigenvalue up to a given bound, or equivalently with $\|\nu\|_2\leqslant X$. From the point of view of equidistribution in families of automorphic forms ~\cite{Kowalski2013,SarnakShinTemplier2016,BrumleyMilicevic2018} one is more broadly interested in counting representations $\pi\subseteq L^2(\Gamma\backslash G/K)$  in a prescribed region $\Omega\subseteq\widehat{G}$ within the \emph{ambient} space $\widehat{G}\simeq\mathfrak{a}^{\ast}_{\mathbb{C}}$ of all such representations; a count with $r$ parameters rather than just one (see \cite[Proposition 7.2]{BrumleyMilicevic2018}).
The natural dual question to this is to count primitive conjugacy classes $[t_{\gamma}]$ occurring in $\Gamma$ not just according to their length $\|\log|t_{\gamma}|\|_2$
 but simply within prescribed regions 
in the $r$-dimensional space $[G]\simeq T/S_n$ of all regular conjugacy classes of $G$ (see \cite{Deitmar2004}).

 An ambient prime geodesic theorem in $\PSL_2\mathbb{C}$ such as our Theorem~\ref{JointSmallIntervals} should similarly count primitive conjugacy classes in $\Gamma$ according to their full parameter, the complex length $\mathbb{C}\ell(\gamma)\in\mathbb{C}/\{\pm 1\}$.  In a different context, Kelmer~\cite[Corollary 3.1]{Kelmer2012} proved effective equidistribution of holonomy for closed geodesics on products of $n$ hyperbolic planes (corresponding to conjugacy classes in an irreducible cocompact lattice $\Gamma<(\PSL_2\mathbb{R})^n$ that are hyperbolic at one place and elliptic at all others), including the asymptotic joint length-holonomy count.

This analogy brings about the natural question of using the non-spherical trace formula of \S\ref{trace-formula-statement-sec} to count geodesics in more general regions $\Omega\subseteq [G]$, the geometric counterpart to the spectral count of $\pi\subseteq L^2(\Gamma\backslash G)$ in regions of $\widehat{G}$ as in \cite[Proposition~7.2]{BrumleyMilicevic2018}. Passage to the geometric sharp-cutoff count and estimates of boundary terms as in \eqref{boundary-intro} in\-vol\-ve estimating contributions on the dual (spectral) side, extending deep into the tempered spectrum $\widehat{G}^{\mathrm{temp}}$ (in all directions), which is of more moderate growth. This in turn involves the
rate of decay of the Fourier transform $\widehat{\chi_{\Omega}}$ and thus the \emph{shape} of $\Omega$ and its boundary, 
as in classical lattice-point counting. We leave this intriguing lead for future work.

\subsection{Overview} For the sake of the reader, we now present an overview of the proof, omitting details. Throughout, $\Gamma < \PSL_2\mathbb{C}$ is a discrete, co-compact, torsion-free subgroup. We refer to \S\ref{background_sec} for background on the geometry and representation theory of $G=\PSL_2\mathbb{C}$ and its quotients.

One of the key tools we use is the non-spherical trace formula. We start with a version of Selberg's trace formula, explicated by Lin and Lipnowski~\cite[Corollary 2]{LinLip}, which captures both the length and holonomy of geodesics. We specialize this, on the spectral side, to representations of a particular type and, on the geometric side, to a particular frequency of holonomy, and obtain Theorem \ref{EvenTrace}, which states that for every $n\in\mathbb{Z}$ and every smooth, even, compactly-supported $g: \mathbb{R} \rightarrow \mathbb{C}$,
\begin{equation}
\label{tracenoutline}
\begin{aligned}
&\frac12\sum\limits_{\nu} (m_\Gamma(\pi_{\nu,n})+ m_\Gamma(\pi_{\nu,-n})) \int_{-\infty}^\infty g(u) e^{u \nu} \, \dd u
+\delta_0(n)\int_{-\infty}^{\infty}g(u)e^u\,\dd u-\frac{1}{2}\delta_{\pm 1}(n)\hat g(0)\\
&\qquad =  \frac{1}{2 \pi}\vol(\Gamma \backslash G)(n^2 g(0) - g''(0))+ \sum\limits_{[\gamma]} \ell(\gamma_0)  w(\gamma) g(\ell(\gamma)) \cos(n\hol(\gamma)),
\end{aligned}
\end{equation}
where the first sum is over the unitary principal and complementary series representations $\pi_{\nu,\pm n}$ (where $\nu\in i\mathbb{R}$ for principal series) occurring with multiplicities $m_{\Gamma}(\pi_{\nu,\pm n})$ in $L^2(\Gamma\backslash G)$, 
$w(\gamma) \asymp_{\Gamma}e^{-\ell(\gamma)}$, and the 
last sum is over the non-trivial hyperbolic and loxodromic conjugacy classes of $\Gamma$. For purposes of counting the length spectrum, where one is typically interested in \eqref{tracenoutline} with a test function $g$ of varying and extended support, it is natural to combine the complementary series and identity terms into a single integral of $g$ against a measure $\mathrm{d}\varpi_{\Gamma}^{\ast}$ related to $\mathrm{d}\varpi_{\Gamma}$; see \eqref{EvenTrace-alt}--\eqref{density-star-def}.
We also explicate a complementary ``odd'' trace formula, sampling geodesics with a weight $h(\ell(\gamma))\sin(n\hol(\gamma))$ for an odd $h\in C_c^{\infty}(\mathbb{R})$; see Theorem~\ref{OddTrace}.

In particular, using the trace formula~\eqref{tracenoutline} with a specific $g\in C_c^{\infty}(\mathbb{R})$ that emphasizes the spectral terms with $R-1 \leqslant |\nu| \leqslant R+1$ with weight $\hat g(i \nu/2 \pi)\gg 1$ and keeps all spectral terms non-negative,
we prove in Proposition ~\ref{Weyl} a bound on 
multiplicities of representations in an interval of fixed length:
\begin{equation}
\label{Weyl-intro}
\sum\limits_{R-1 \leqslant |\nu| \leqslant R+1} m_\Gamma(\pi_{\nu,n})
\ll \vol(\Gamma\backslash G)\cdot (R^2+n^2)+\mathrm{O}_{\Gamma}(1).
\end{equation}
The spectral bound \eqref{Weyl-intro} is a bound on local spectral densities, whose leading term agrees with the Plancherel measure. Such a bound is a standard tool in the passage from a smooth to sharp count of the spectrum in the proof of Weyl's law (cf.~\cite[Proposition~10.1]{BrumleyMilicevic2018}).

As the first step toward a sharp geodesic count, in Lemma~\ref{trace-estimates-lemma} we use the trace formula \eqref{tracenoutline} to get a handle on the geometric sum
\begin{equation}
\label{Tncos-intro}
T_n^{\cos}[g_{y,\eta}] = \sum_{[\gamma]} \ell(\gamma_0) w(\gamma) g_{y,\eta}(\ell(\gamma)) \cos(n\hol(\gamma))
\end{equation}
with a smooth, even function $g_{y,\eta}$ which approximates $\chi_{[-y,y]}$ and is supported on $[-y-\eta, y + \eta]$. The principal series terms in \eqref{tracenoutline} are then weighted with its Fourier transform $\hat{g}_{y,\eta}(i\nu/2\pi)$, which is essentially supported up to roughly $|\nu|\ll 1/\eta$ (exhibiting Schwartz decay past this range); we estimate these terms using \eqref{Weyl-intro}. In Lemma~\ref{geodesics-upper-bound}, we majorize the contributions to \eqref{Tncos-intro} of classes with $\ell(\gamma)\in 
[y-\eta,y+\eta]$ by a suitable non-negative smooth bump function on $[y-2\eta,y+2\eta]$ (whose Fourier transform again extends to roughly $\asymp 1/\eta$), and show with another application of \eqref{tracenoutline} and \eqref{Weyl-intro} that these boundary terms contribute $\mathrm{O}_{\Gamma}(e^y\eta+1/\eta^2)$ to \eqref{Tncos-intro}. Taking the boundary length $\eta=e^{-y/3}$, we obtain in Proposition ~\ref{trace-estimates-sharp}
\begin{equation}
\label{Tny-intro}
T_n^{\cos}(y) := \sum_{\ell(\gamma)\leqslant y}\ell(\gamma_0)w(\gamma)\cos(n\hol(\gamma))
=\delta_0(n)\int_{-y}^y\dd\varpi_{\Gamma}^{\ast}(u)+\mathrm{O}_{\Gamma}\big(e^{2y/3}+n^2y\big)
\end{equation}
and a corresponding asymptotic for $T_n^{\sin}(y)$ (and thus $T_n(y)$ in \eqref{TnTnP}).

Counts such as \eqref{Tny-intro} arising from the trace formula naturally involve the weights $w(\gamma)$ (from the Weyl discriminant) and all conjugacy classes $[\gamma]$, including imprimitive ones. In Section~\ref{PreliminariesSection} we address these two technical aspects and show they are essentially harmless, namely, using that $w(\gamma)=e^{-\ell(\gamma)}+\mathrm{O}_{\Gamma}(e^{-2\ell(\gamma)})$ and that imprimitive classes contribute comparatively very few (namely $\mathrm{O}_{\Gamma}(e^y)$) terms to \eqref{Tny-intro}, we prove in Lemma~\ref{weights-primitivity} that a simpler sum $S_n^P(y)$ over primitive geodesics satisfies
 \begin{equation}
 \label{SnP-intro}
 S_n^{P}(y):= \sump_{\ell(\gamma) \leqslant y} \ell(\gamma) e^{-\ell(\gamma)+i n \hol \gamma}=T_n(y)+\mathrm{O}_{\Gamma}(y).
 \end{equation}
Combining \eqref{Tny-intro} and \eqref{SnP-intro} and using integration by parts, we obtain in Proposition~\ref{holonomy-sums-prop}, as cited above, the estimate \eqref{hol-char-sum-intro} on ``holonomy character sums'' $K_n(y)$. In particular, Proposition~\ref{holonomy-sums-prop} for $n\neq 0$ quantifies cancellation among the holonomies of primitive geodesics with $\ell(\gamma)\leqslant y$ and shows that these are equidistributed throughout $\mathbb{R}/2\pi\mathbb{Z}$.

While we have so far in \eqref{Tny-intro}, \eqref{SnP-intro}, and \eqref{hol-char-sum-intro} emphasized the traditional sharp cutoff $\ell(\gamma)\leqslant y$, we in fact throughout also prove estimates for analogous smooth-cutoff quantities like $S_n^P[g_{y,\eta}]$ and $K_n[g_{y,\eta}]$, with explicit dependence on $\eta$. Moreover, we observe structural analogies in the length and holonomy aspects, such as in the comparison of our Lemmata~\ref{geodesics-upper-bound} and \ref{passage-to-sharp-holonomy}, which state roughly that
\begin{equation}
\label{boundary-intro}
\sum_{y-\eta\leqslant\ell(\gamma)\leqslant y+\eta}\ell(\gamma_0)w(\gamma)\ll_{\Gamma} \eta e^y+\frac1{\eta^2},\quad
\sum_{\substack{\ell(\gamma)\leqslant y\\ \theta_0-\eta'\leqslant\hol(\gamma)\leqslant\theta_0+\eta'}}\ell(\gamma_0)w(\gamma)\ll_{\Gamma} \eta' e^y+\frac y{\eta'{}^2}.
\end{equation}
This analogy is fundamentally due to the fact that, in each case, the dual (spectral) sum over $\pi_{\nu,n}$ extends up to roughly $1/\eta$ or $1/\eta'$ (in the $\nu$- and $n$-direction, respectively) and that the Plancherel measure shown in \eqref{Weyl-intro} and \eqref{plancherel} is symmetric; see Remark~\ref{symmetry-remark}.

With this in mind, in \S\ref{smooth-count-subsec} and \S\ref{ambient-pgt-sharp-section} we prove $4=2\times 2$ asymptotic formulas for ``ambient'' prime geodesic counts, beginning with the smooth count (Proposition~\ref{SmoothSmooth}) of the form
\begin{equation}
\label{smooth-smooth-overview}
\begin{aligned}
\pi_{\Gamma}(g_{y,\eta},f)
&:=\sump_{[\gamma]} f(\hol(\gamma)) g_{y,\eta}(\ell(\gamma))\\
&= \frac1{2\pi}\int_0^{2\pi}f(\theta)\,\dd\theta\cdot\int_2^{\infty}g_{y,\eta}(u)\,\dd\varpi_{\Gamma}(u)+ \mathrm{O}_{\Gamma, \eta_0}\Big(\frac{e^y}{y \eta^2} \| \hat f\|_1 + e^y \|\hat f\|_{2,1}\Big)
\end{aligned}
\end{equation}
for a smooth $f:\mathbb{R}/2\pi\mathbb{Z}\to\mathbb{C}$, $\| \hat f\|_{2,1} := \|\hat f\|_1 + \|\widehat{f''}\|_1$, and $0 < \eta \leqslant \eta_0$, and then for the related counts $\pi_{\Gamma}(y,f)$, $\pi_{\Gamma}(g_{y,\eta},J)$, and $\pi_{\Gamma}(y,J)$ (for $y>0$ and any interval $J\subseteq\mathbb{R}/2\pi\mathbb{Z}$), which have sharp cutoffs in the length, holonomy, and in both aspects, respectively. In particular, by spectrally expanding $f$ into a Fourier series and estimating $K_n(y)$ using \eqref{hol-char-sum-intro}, in Theorem~\ref{SarnakTheorem} we prove that
\begin{equation*}
\pi_{\Gamma}(y,f):=\sump_{\ell(\gamma) \leqslant y} f(\hol(\gamma))
=\frac1{2\pi}\int_0^{2\pi}f(\theta)\,\dd\theta\cdot\int_2^y\dd\varpi_{\Gamma}(u)+\mathrm{O}_{\Gamma}\Big(\|\hat{f}\|_1\frac{e^{5y/3}}y+\|\widehat{f''}\|_1e^y\Big),
\end{equation*}
which recovers \cite[Theorem~1]{SarWa} for a fixed $f$. However, our explicit dependence on $f$ allows us to choose a smooth $f_{J, \eta'}$ approximating a sharp holonomy cutoff, while maintaining explicit dependence on $\eta'$, and then estimate the terms with holonomy within $\eta'$ of the boundary of $J$ using the second bound in \eqref{boundary-intro}. In exact analogy with the passage to the sharp count in \eqref{Tny-intro}, we choose $\eta'=e^{-y/3}$, which leads in Theorem~\ref{sharpsharp} to the effective count \eqref{SW-effective} for a sharp length and holonomy count, and then to its consequence \eqref{Total-PGT}.

In \S\ref{APGT-shrinking-subsec}, we prove asymptotic formulas that provide counts for the number of geodesics in intervals $I$ and $J$ of length and holonomy, respectively. These counts feature a combination of sharp and smooth cutoffs in the length and holonomy, in complete parallel to \S\ref{ambient-pgt-sharp-section}, but with improvements in the error term when the lengths of $I$ and/or $J$ are shrinking.
After explicating in Lemma~\ref{smooth-smooth-intervals} an asymptotic analogous to \eqref{smooth-smooth-overview} for the count $\pi_{\Gamma}(g_{I,\eta},f_{J,\eta'})$ with suitable smooth length and holonomy cutoffs, we derive as Corollary~\ref{short-range-upper-bound} the upper bound
\begin{equation}
\label{ambient-boundary-intro}
\sum_{\substack{\ell(\gamma)\in I\\\hol(\gamma)\in J}} 1
\ll_{\Gamma, \eta_0} (|I|+\eta)(|J|+\eta')\frac{e^{2y}}{y} + \frac{e^y}{y \eta^2} \log^{\ast}\frac{1}{\eta'} + \frac{e^y}{\eta'^2},
\end{equation}
for every  $I \subseteq [0,y]$, $J \subseteq \mathbb{R}/2 \pi \mathbb{Z}$ and $0<\eta\leqslant\eta_0$, $0 <\eta'\leqslant 2 \pi$.
This is a (normalized; recall that $w(\gamma)\sim_{\Gamma}e^{-\ell(\gamma)}$) ambiental analogue of \eqref{boundary-intro}, where we additionally profit in the first term when lengths and holonomies are sampled from short intervals. Then we estimate the smooth count using Lemma \ref{smooth-smooth-intervals} and bound the ambiguous regions with (\ref{ambient-boundary-intro}), which leads to Proposition \ref{sharp-length-intervals-prop} and our main result, Theorem~\ref{JointSmallIntervals}.

\subsection{Notation}
\label{notation-subsec}
We write $f=\mathrm{O}(g)$ or $f\ll g$ to mean that $|f|\leqslant Cg$ for some constant $C>0$, which may be different from line to line and is absolute unless explicitly indicated with a subscript. We also write $f\asymp g$ to denote that $f\ll g\ll f$, and $f\sim g$ to denote that $\lim f/g=1$, where the direction of the limit is clear from the context, again with dependencies of implied constants and rate of convergence only as indicated. (No confusion should arise with the usage of $\sim$ to also denote conjugate elements in a matrix group.)

We use the convention $\hat f(\xi) = \int_{\mathbb{R}} f(x) e^{-2 \pi i x \xi} \, \dd \xi$ to denote the Fourier transform of a Schwartz function $f$, and we choose the normalization $\hat f(n) = \int_{\mathbb{R}/2 \pi \mathbb{Z}} f(x)e^{-inx} \, \dd x$ for the Fourier coefficient of a periodic function $f$ on $\mathbb{R}/2 \pi \mathbb{Z}$. We also use the shorthand notation $\log^{\ast}(x) := \log(x + 2)$ for $x>0$. Finally, we write $\delta_a(n)$ for the Kronecker delta-function, that is $\delta_a(n)=1$ if $n=a$ and 0 otherwise (including writing $\delta_{\pm 1}(n)$ as shorthand for whether $n=\pm 1$ or not).

\subsection{Acknowledgements} We are very grateful to the anonymous referees for their careful reading and thoughtful comments, which have significantly improved the manuscript, including a crucial observation that led to the improvement in our ambient counts over short ranges. The second author would like to thank the Max Planck Institute for Mathematics for their support and outstanding working conditions and research atmosphere.

\section{Non-Spherical Trace Formulas and Weyl's Law}

\subsection{Background on groups and representations}
\label{background_sec}

Let $G =\PSL_2\mathbb{C}$, and let $\Gamma$ be a discrete, torsion-free, co-compact subgroup of $G$. The group $G = \PSL_2\mathbb{C}$ is in one-to-one-correspondence with the group of orientation-preserving isometries of the 3-dimensional, hyperbolic upper half space $\mathbb{H}^3$, which we describe shortly. We are primarily concerned with the geometry of the fundamental domain $M=\Gamma\backslash\mathbb{H}^3$, which is a compact hyperbolic 3-manifold, 
and its covering $\Gamma\backslash G$. In this section, we collect some background material about the group $G$, its geometric action on $\mathbb{H}^3$, the geometry of $M$, and the representation theory of $L^2(G)$ and $L^2(\Gamma\backslash G)$.

The group $\PSL_2\mathbb{C}$ has the Iwasawa decomposition $G=UAK$, where
\begin{gather*}
 U = \left\{ \begin{pmatrix}
1 & z\\
0 & 1
\end{pmatrix}: z = x+iy \in \mathbb{C},\,x,y\in\mathbb{R}\right\}, \quad
A = \left\{\begin{pmatrix}
e^{u/2} & 0\\
0 & e^{-u/2}
\end{pmatrix} : u \in \mathbb{R}\right\},\\ 
K = 
\PSU_2=\left\{\begin{pmatrix}
\alpha & \beta\\
- \bar \beta & \bar \alpha
\end{pmatrix} : \alpha, \beta \in \mathbb{C},\, |\alpha|^2 + |\beta|^2 = 1\right\}/\{\pm 1\}.
\end{gather*}
Here, $UA$ is a Borel subgroup of $G$ with the unipotent subgroup $U$ and with $A$ a maximal torus in $G$, and $K$ is a maximal compact subgroup of $G$. The Haar measure on each of these subgroups is unique up to a constant multiple. We choose the Euclidean measure $\dd x\, \dd y$ on $U$, $\dd u$ on $A$, and the volume $1$ Haar measure $\dd k$ on $K$. This induces a Haar measure on $G$.

The quotient $G/K$ may be identified with the upper half space $\mathbb{H}^3=\{z+ir:z\in\mathbb{C},\,r>0\}$, a 3-dimensional hyperbolic space with the hyperbolic metric $\dd s=(|\dd z|^2+(\dd r)^2)^{1/2}/r$. The action of $G$ by left multiplication induces an action of $G$ on $\mathbb{H}^3$ by orientation-preserving isometries (preserving the hyperbolic metric), which may also be described in terms of $(z,r)$-coordinates; see \cite[\S1.1]{EGM}. In fact, the group $G$ is in one-to-one-correspondence with the group of orientation-preserving isometries of $\mathbb{H}^3$.

Elements of $G$ can be classified into identity, parabolic, elliptic, hyperbolic, and loxodromic, each with a distinct type of geometric action on $\mathbb{H}^3$. Parabolic elements are conjugate to an element of the unipotent group $U$ described above. All other elements are diagonalizable. Every diagonalizable element $\gamma \in G$ is conjugate to some 
\[t_\gamma\in T:=\left\{\begin{pmatrix}
e^{(u + i \theta)/2} & 0 \\
0 & e^{-(u + i \theta)/2}
\end{pmatrix}:u\in\mathbb{R},\,\theta\in\mathbb{R}/2\pi\mathbb{Z}\right\}.\]
We will also refer to a matrix of this form as $t_{u,\theta}$. If $u = 0$, then $\gamma$ is elliptic; if $\theta = 0$, then $\gamma$ is hyperbolic. All other elements are said to be loxodromic, and this term is sometimes also applied to hyperbolic elements. Note that we can conjugate $t_\gamma$ by 
$\big(\begin{smallmatrix} 0 & -1\\ 1 & 0\end{smallmatrix}\big)$
to swap the diagonal elements; therefore, we can and do choose the length/holonomy pair to have non-negative length.

If $\gamma$ is hyperbolic or loxodromic, it has two fixed points on the boundary $\partial\mathbb{H}^3\cup\{\infty\}=\hat{\mathbb{C}}$ and acts on $\mathbb{H}^3$ by a shift along the geodesic connecting these two fixed points by the \emph{length} $\ell(\gamma) = u$ followed by a rotation around the same axis by the \emph{holonomy} $\hol(\gamma) = \theta$. We also talk about the \emph{complex length} $\mathbb{C} \ell(\gamma) = u + i \theta$. A hyperbolic or loxodromic $\gamma\in\Gamma$ corresponds to a closed geodesic in $\Gamma \backslash \mathbb{H}^3$. Since $\Gamma$ is discrete, $\Gamma \backslash \mathbb{H}^3$ contains a geodesic of minimum length, which we refer to throughout the paper as $\eta_0(\Gamma)$.
 
The group $T$ of diagonal elements of $G$ has a Haar measure $\dd u \, \dd\theta/{2\pi}$. For $\nu\in\mathbb{C}$ and $p\in\mathbb{Z}$, define the character $\chi_{\nu,p}$ on $T$ by
\begin{equation}
\label{characters-T}
\chi_{\nu,p}\left(\begin{pmatrix}
 e^{(u + i \theta)/2} & 0 \\
 0 & e^{-(u + i \theta)/2}
 \end{pmatrix}\right) = e^{u \nu + i p \theta}.
 \end{equation}
This is a unitary character for $\nu\in i\mathbb{R}$.

The classification of irreducible, unitary representations of $G = \PSL_2 \mathbb{C}$ is classical. Let $\pi_{\nu,p}$ denote the representation of $G$ obtained by extending the character $\chi_{\nu,p}$ to $B=UT$ trivially along $U$ and then inducing unitarily to $G$. The unitary irreducible representations of $G$ are then as follows:
\begin{itemize}
\item the trivial representation;
\item the unitary principal series representations $\pi_{\nu,p}$, for $\nu \in i \mathbb{R}$ and $p \in \mathbb{Z}$;
\item the complementary series representations $\pi_{\nu,0}$, for $0 <\nu <1$.
\end{itemize} 
The only equivalences among the above irreducible representations of $G$ are that $\pi_{\nu, p} \cong \pi_{- \nu, -p}$ \cite[Theorem 16.2]{Knapp}. For a complete description of the principal and complementary series representations including the $G$-invariant inner product, see \cite[\S II.4]{Knapp}, \cite[\S 2.4.1]{LinLip}. 

 Let $L^2(\Gamma \backslash G)$ be the space of square-integrable functions on $\Gamma \backslash G$. The group $G$ acts on $L^2(\Gamma \backslash G)$ by the right-regular representation. Then we have the decomposition
 \begin{equation}
 \label{L2decomp}
 L^2(\Gamma \backslash G) = \bigoplus\nolimits_{\pi \in \hat G} m_\Gamma(\pi) \pi,
 \end{equation}
 where $\hat G$ is the set of irreducible, unitary representations of $G$ and the non-negative integer $m_\Gamma(\pi)$ is the multiplicity of $\pi$ in $L^2(\Gamma \backslash G)$. Since $\Gamma$ is co-compact, the non-vanishing terms in \eqref{L2decomp} form a countable sum that may (after the trivial representation) be double-indexed by $\pi_{pj}\simeq\pi_{\nu_{pj},p}$, where, for every $p\in\mathbb{Z}$, $|\nu_{pj}|\to\infty$ ($j\to\infty$). 

Decomposition \eqref{L2decomp} into irreducible representations is fundamentally connected to the theory of automorphic forms. Each representation $\pi$ appearing in $L^2(\Gamma \backslash G)$ corresponds to an irreducible representation space $V_\pi$. The Casimir element of $G$ acts on $V_\pi$ by scalar multiplication, and $V_\pi$ is spanned by $\Gamma$-automorphic functions. For an explicit description, see \cite[Chapter 8]{Lok}.

\subsection{Non-Spherical Trace Formulas}
\label{trace-formula-statement-sec}

For a co-compact discrete subgroup $\Gamma<G$, the Selberg trace formula relates spectral information about the multiplicities of representations in $L^2(\Gamma \backslash G)$ to geometric information about elements of $\Gamma$. This formula results from computing the trace of the resolvent operator in two ways.

When the underlying kernel is bi-$K$-invariant, this recovers the classical Selberg trace formula on the compact hyperbolic manifold $M=\Gamma\backslash G/K$. For example, in the present rank one case $G=\PSL_2\mathbb{C}$, the trace formula~\cite[Theorem~5.1]{EGM} (after removing the Eisenstein, parabolic, and elliptic terms) relates the spectrum of the Laplacian on $L^2(M)$ with the lengths of closed geodesics on $M$; see also \cite[Theorem 10.2]{Iwaniec1995} for the more familiar case $G=\SL_2\mathbb{R}$.

Full control over the holonomy of geodesics on $M$ requires a trace formula on $\Gamma\backslash G$. The following trace formula was explicated by Lin and Lipnowski for compact, hyperbolic 3-manifolds. We refer to \S\ref{background_sec} for notations.

\begin{theorem}[Lin--Lipnowski~{\cite[Corollary 2]{LinLip}}]
\label{LL} Let $\Gamma<\PSL_2\mathbb{C}$ be a discrete, co-compact, torsion-free subgroup. Then, for every smooth, compactly-supported function $F:T\to\mathbb{C}$ such that $F(t) = F(t^{-1})$,	
	\begin{align*}
	&\sum\limits_{\nu,p} m_\Gamma(\pi_{\nu,p}) \hat{F}(\chi_{\nu,p}^{-1}) + \frac{1}{2} \int_T |D(t^{-1})|^{1/2} F(t) \, \dd t\\
	&\qquad= - \frac{1}{2\pi} \vol(\Gamma \backslash G) \Big(\p{^2}{u^2} + \p{^2}{\theta^2}\Big)F\Big|_{t = 1} + \sum\limits_{[\gamma]} \ell(\gamma_0) |D(t_\gamma^{-1})|^{-1/2} F(t_\gamma),
	\end{align*}
where $\hat F(\chi) = \int_T F(t) \chi^{-1}(t) \,\dd t$ is the Abel transform and $D(t_\gamma) = (1- e^{\mathbb{C} \ell(\gamma)})^2(1 - e^{- \mathbb{C} \ell(\gamma)})^2$ is the Weyl discriminant. The first sum is over unitary principal and complementary series representations $\pi_{\nu,p}$, and $m_\Gamma(\pi_{\nu,p})$ refers to the multiplicity of a representation $\pi_{\nu, p}$ in $L^2(\Gamma \backslash G)$. The latter sum is over the non-trivial conjugacy classes $[\gamma]$ of $\Gamma$, and $\ell(\gamma_0)$ refers to the length of the geodesic corresponding to the primitive element $\gamma_0$ that generates $\gamma$.
\end{theorem}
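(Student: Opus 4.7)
The plan is to derive the identity from the Selberg trace formula for the co-compact lattice $\Gamma < G=\PSL_2\mathbb{C}$, specialized to a test function $f\in C_c^\infty(G)$ built from $F$ via its orbital integrals. Given smooth, compactly-supported, Weyl-symmetric $F:T\to\mathbb{C}$, the first step is to construct $f$ so that, for every regular semisimple $t\in T$,
\[
\int_{G_t\backslash G} f(g^{-1}tg)\,\dd g = |D(t)|^{-1/2} F(t).
\]
This is possible because Weyl integration identifies conjugation-invariant smooth functions on $G^{\mathrm{reg}}$ with Weyl-symmetric smooth functions on $T^{\mathrm{reg}}$, modulo a choice of extension across the singular set. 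Because $\Gamma$ is co-compact, the bare trace identity
\[
\sum_{\pi\in\widehat{G}} m_\Gamma(\pi)\,\mathrm{tr}\,\pi(f) = \vol(\Gamma\backslash G)\cdot f(1) + \sum_{[\gamma]\ne 1} \vol(\Gamma_\gamma\backslash G_\gamma)\,O_\gamma(f)
\]
holds with absolutely convergent sums.

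For $\gamma\ne 1$, the prescription of $f$ gives $\vol(\Gamma_\gamma\backslash G_\gamma)\,O_\gamma(f) = \ell(\gamma_0)\,|D(t_\gamma)|^{-1/2}F(t_\gamma)$, using that $\vol(\Gamma_\gamma\backslash G_\gamma)=\ell(\gamma_0)$ with the given Haar normalizations; this yields the hyperbolic/loxodromic term on the right-hand side of the statement. The identity term is $\vol(\Gamma\backslash G)\cdot f(1)$, and $f(1)$ is recovered from Plancherel on $G$: since $\PSL_2\mathbb{C}$ is a complex group with tempered Plancherel density $\mu_{\mathrm{Pl}}(\nu,p)\propto \nu^2+p^2$ supported on the principal series, integrating $\mathrm{tr}\,\pi_{\nu,p}(f)=\hat F(\chi_{\nu,p}^{-1})$ against this density amounts to Fourier inversion on the dual of $T$ and produces $-\tfrac{1}{2\pi}(\partial_u^2+\partial_\theta^2)F|_{t=1}$, matching the Casimir-type term in the statement.

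On the spectral side, Harish-Chandra's character formula for a complex group is particularly clean: on $T^{\mathrm{reg}}$,
\[
\Theta_{\pi_{\nu,p}}(t) = \frac{\chi_{\nu,p}(t)+\chi_{-\nu,-p}(t)}{|D(t)|^{1/2}}.
\]
Applying Weyl integration to $\mathrm{tr}\,\pi_{\nu,p}(f)$ and invoking both the symmetry $F(t)=F(t^{-1})$ and the equivalence $\pi_{\nu,p}\cong\pi_{-\nu,-p}$ collapses $\mathrm{tr}\,\pi_{\nu,p}(f)$ to $\hat F(\chi_{\nu,p}^{-1})$, producing the main spectral sum. The auxiliary term $\tfrac12\int_T|D(t^{-1})|^{1/2}F(t)\,\dd t$ on the spectral side arises from the ambiguity in extending $f$ across the singular set $G\setminus G^{\mathrm{reg}}$ when prescribing $f$ via $|D|^{-1/2}F$: it is a Plancherel-type correction that compensates for the behavior of $|D|^{1/2}$ as conjugation classes approach $t=1$, with the factor $\tfrac12=1/|W|$ reflecting the Weyl group.

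The main technical obstacle is the careful analytic handling of the identity contribution — that is, the Harish-Chandra limit / Plancherel inversion in the complex rank-one setting — together with consistent bookkeeping of Haar measures on $A$, $T$, and $K$, the sign of $|D|^{1/2}$, and the precise form of the Abel transform, so that every constant in the final formula lands in the right place. Lin and Lipnowski carry out this computation in detail; the plan above is the organizational skeleton on which it rests.
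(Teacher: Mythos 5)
The paper does not prove this theorem: it is stated verbatim as Corollary~2 of Lin--Lipnowski \cite{LinLip} and cited as such, with no argument offered. There is therefore no internal proof to compare your sketch against; what follows is a review of the sketch on its own terms.

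Your high-level plan --- choose $f\in C_c^\infty(G)$ with regular semisimple orbital integrals $O_t(f)=|D(t)|^{-1/2}F(t)$, feed it into the bare pre-trace identity for a co-compact $\Gamma$, match the non-identity geometric orbital integrals to the loxodromic sum, use Plancherel for the identity term, and use the Harish-Chandra character formula together with $\pi_{\nu,p}\cong\pi_{-\nu,-p}$ and the Weyl-invariance of $F$ to reduce $\operatorname{tr}\pi_{\nu,p}(f)$ to $\hat F(\chi_{\nu,p}^{-1})$ --- is the standard and correct skeleton, and it is essentially what Lin and Lipnowski carry out.

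However, there is one genuine conceptual error. You describe the term $\tfrac12\int_T|D(t^{-1})|^{1/2}F(t)\,\dd t$ as ``a Plancherel-type correction that compensates for the behavior of $|D|^{1/2}$ as conjugation classes approach $t=1$,'' attributing it to the ambiguity in extending $f$ across the singular set. That is not what this term is. The spectral sum in the statement runs explicitly over the \emph{unitary principal and complementary series}; the trivial representation, which occurs with multiplicity $1$ in $L^2(\Gamma\backslash G)$, is excluded from that sum and its contribution $\operatorname{tr}\pi_{\mathrm{triv}}(f)=\int_G f(g)\,\dd g$ must appear separately on the spectral side. Applying the Weyl integration formula to the conjugation-invariant $f$ and substituting the prescription $O_t(f)=|D(t)|^{-1/2}F(t)$ gives
\[
\int_G f(g)\,\dd g \;=\; \frac{1}{|W|}\int_T |D(t)|\,O_t(f)\,\dd t \;=\; \frac12\int_T |D(t)|^{1/2}F(t)\,\dd t \;=\; \frac12\int_T |D(t^{-1})|^{1/2}F(t)\,\dd t,
\]
which is precisely the term in question. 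So it is the trivial-representation contribution, not a regularization artifact. Your sketch would also benefit from making explicit that the smooth extension of $|D|^{-1/2}F$ across the non-regular locus is where the assumption that $F$ be smooth and Weyl-symmetric enters (the discriminant vanishes to order two there, so the quotient is smooth precisely when $F$ is Weyl-invariant), but this is a matter of completeness rather than correctness.
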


Strictly speaking, there are two elements, $\gamma_0$ and $\gamma_0^{-1}$, which generate $\gamma$; however, $\ell(\gamma_0) = \ell(\gamma_0^{-1})$ and so here and henceforth we ignore this distinction.

Lin and Lipnowski used Theorem~\ref{LL} with $F(t_{u,\theta}) = g(u) \cos(\theta)$, where $g$ is even, smooth, and compactly supported, and evaluated the above equation to isolate the representations $\pi_{\nu, \pm 1}$, which in turn gives a handle on the first eigenvalue of the Hodge Laplacian acting on coexact 1-forms. From a more analytic perspective, irreducible representations of $K=\PSU_2$ are classified as $(2\ell+1)$-dimensional representations $\tau_{\ell}$ ($\ell\geqslant 0$), and according to the right $K$-action we have the decompositions $\pi_{\nu,p}\vert_K\simeq\bigoplus_{\ell=|p|}^{\infty}\tau_{\ell}$ and
\begin{equation}
\label{decomp_p}
L^2(\Gamma\backslash G)=\bigoplus\nolimits_{\ell=0}^{\infty}L^2(\Gamma\backslash G)_{\ell}.
\end{equation}
In the following two theorems, we similarly specialize Theorem~\ref{LL} to isolate multiplicities of representations $\pi_{\nu,\pm p}\subset L^2(\Gamma\backslash G)$ whose lowest $K$-weight vectors occur in a fixed component $L^2(\Gamma\backslash G)_{|p|}$. Theorems~\ref{EvenTrace} (which we adapt from~\cite{Dever2019}) and \ref{OddTrace} are the ``even'' and ``odd'' trace formulas and should be compared to~\cite[Theorem~6.5]{SarWa}. Indeed, the intrinsic symmetry in Theorem~\ref{LL} imposes two equalities among the four quantities $m_{\Gamma}(\pi_{\pm\nu,\pm p})$, so that two trace formulas provide for the fullest possible spectral resolution in \eqref{decomp_p}. Note that $p=0$ corresponds to the familiar spherical Maass forms on
$\Gamma\backslash\mathbb{H}^3$, 
 in which case Theorem~\ref{EvenTrace} recovers the classical spherical trace formula~\cite[Theorem~5.1]{EGM} for compact hyperbolic 3-manifolds.

\begin{theorem}\label{EvenTrace} Let $\Gamma<\PSL_2\mathbb{C}$ be a discrete, co-compact, torsion-free subgroup, and let $n\in\mathbb{Z}$. Then, for every smooth, even, compactly supported function $g: \mathbb{R}\to\mathbb{C}$,
\begin{equation}
\label{tracen}
\begin{aligned}
&\frac12\sum\limits_{\nu} (m_\Gamma(\pi_{\nu,n})+ m_\Gamma(\pi_{\nu,-n})) \int_{-\infty}^\infty g(u) e^{u \nu} \, \dd u\\
&\qquad\quad+\delta_0(n)\int_{-\infty}^{\infty}g(u)\ e^u \,\dd u-\frac{1}{2}\delta_{\pm 1}(n)\hat g(0)\\
	& =  \frac{1}{2 \pi}\vol(\Gamma \backslash G)(n^2 g(0) - g''(0))\\
	&\qquad\quad+ \sum\limits_{[\gamma]} \ell(\gamma_0) |1-e^{\mathbb{C} \ell(\gamma)}|^{-1}  |1-e^{-\mathbb{C} \ell(\gamma)}|^{-1} g(\ell(\gamma)) \cos(n\hol(\gamma)).
	\end{aligned}
\end{equation}
The first sum is over unitary principal (for $n=0$, also complementary) series representations $\pi_{\nu,\pm n}$, $m_\Gamma(\pi_{\nu,\pm n})$ refers to the multiplicity of a representation $\pi_{\nu, \pm n}$ in $L^2(\Gamma \backslash G)$, and $\delta_0$, $\delta_{\pm 1}$ are
as in \S\ref{notation-subsec}.
The latter sum is over the non-trivial hyperbolic and loxodromic conjugacy classes $[\gamma]$ of $\Gamma$, and $\ell(\gamma_0)$ refers to the length of the geodesic corresponding to the primitive element $\gamma_0$ which generates $\gamma$.
\end{theorem}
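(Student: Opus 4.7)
The plan is to apply Theorem~\ref{LL} to the separable test function $F(t_{u,\theta}) = g(u)\cos(n\theta)$. The symmetry $F(t^{-1}) = F(t)$ holds because $t_{u,\theta}^{-1} = t_{-u,-\theta}$ and both $g$ and $\cos$ are even. The angular cosine factor, via Fourier orthogonality on $\mathbb{R}/2\pi\mathbb{Z}$, will isolate representations with $K$-weight $p = \pm n$ on the spectral side.

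For the spectral side, using $\chi_{\nu,p}^{-1}(t_{u,\theta}) = e^{-u\nu - ip\theta}$ and the Haar measure $\dd u\,\dd\theta/(2\pi)$, the Abel transform factorizes as
\[
\hat F(\chi_{\nu,p}^{-1}) = \int_{\mathbb{R}} g(u) e^{-u\nu}\,\dd u \cdot \int_0^{2\pi}\cos(n\theta)e^{-ip\theta}\,\frac{\dd\theta}{2\pi}.
\]
By evenness of $g$, the $u$-integral equals $\int_{\mathbb{R}} g(u)e^{u\nu}\,\dd u$; the $\theta$-integral equals $\frac12(\delta_{p,n}+\delta_{p,-n})$ for $n\neq 0$ and $\delta_{p,0}$ for $n=0$, so the spectral sum in Theorem~\ref{LL} collapses to the claimed combination involving only $m_\Gamma(\pi_{\nu,\pm n})$. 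For the Weyl-discriminant correction term, the simplification $|1-e^{\pm(u+i\theta)}|^2 = 2e^{\pm u}(\cosh u - \cos\theta)$ yields $|D(t_{u,\theta}^{-1})|^{1/2} = 2(\cosh u - \cos\theta)$; expanding this against $F$ and using $\int g(u)\cosh u\,\dd u = \int g(u)e^u\,\dd u$ (again by evenness of $g$), together with $\int_0^{2\pi}\cos(n\theta)\,\dd\theta/(2\pi) = \delta_0(n)$ and $\int_0^{2\pi}\cos\theta\cos(n\theta)\,\dd\theta/(2\pi) = \frac12\delta_{\pm 1}(n)$, produces exactly the two correction terms $\delta_0(n)\int g(u)e^u\,\dd u - \frac12\delta_{\pm 1}(n)\hat g(0)$ on the left-hand side of \eqref{tracen}.

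The right-hand side requires no surprises: $(\partial_u^2 + \partial_\theta^2)F|_{t=1} = g''(0) - n^2 g(0)$ produces the identity term, and the sum over conjugacy classes follows directly from $|D(t_\gamma^{-1})|^{-1/2} = |1 - e^{\mathbb{C}\ell(\gamma)}|^{-1}|1 - e^{-\mathbb{C}\ell(\gamma)}|^{-1}$ and $F(t_\gamma) = g(\ell(\gamma))\cos(n\hol(\gamma))$; the non-identity classes are precisely hyperbolic and loxodromic because $\Gamma$ is torsion-free cocompact, so no elliptic or parabolic elements appear. The argument is a direct specialization of Theorem~\ref{LL}, with no substantial obstacle: the essential content is the choice of a separable test function and the Fourier orthogonality that isolates a single $K$-type band.
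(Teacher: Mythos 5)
Your proof is correct and follows essentially the same route as the paper: specialize Theorem~\ref{LL} to $F(t_{u,\theta}) = g(u)\cos(n\theta)$, use angular orthogonality to isolate $p=\pm n$, and simplify the Weyl-discriminant correction. One small notational slip: by the paper's definition $\hat F(\chi)=\int_T F\,\chi^{-1}$, so $\hat F(\chi_{\nu,p}^{-1})$ involves integrating $F$ against $\chi_{\nu,p}$, i.e.\ $e^{u\nu+ip\theta}$, not $e^{-u\nu-ip\theta}$ as you wrote; this is harmless here because $g$ is even and $\cos(n\theta)$ is even, so the final expression $\int g(u)e^{u\nu}\,\dd u$ comes out the same either way.
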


\begin{remark}
The left-hand side in Theorem~\ref{EvenTrace} may be rewritten as
\begin{equation}
\label{EvenTrace-alt}
\begin{aligned}
&\delta_0(n)\int_{-\infty}^{\infty}g(u)\,\dd{\varpi}^{\ast}_{\Gamma}(u)\\
&\qquad+\frac12\sum\limits_{\nu\in i\mathbb{R}} (m_\Gamma(\pi_{\nu,n})+ m_\Gamma(\pi_{\nu,-n})) \hat{g}\Big(\frac{i\nu}{2\pi}\Big)-\frac{1}{2}\delta_{\pm 1}(n)\hat g(0),
\end{aligned}
\end{equation}
where the sum is now only over unitary principal series representations $\pi_{\nu,\pm n}$ with $\nu\in i\mathbb{R}$ (including for $n=0$), and ${\varpi}^{\ast}_{\Gamma}$ is the absolutely continuous measure on $\mathbb{R}$ given by
\begin{equation}
\label{density-star-def}
\dd{\varpi}^{\ast}_{\Gamma}(u)=\bigg(e^u+\sum_{\nu\in(0,1)}m_{\Gamma}(\pi_{\nu,0})e^{u\nu}\bigg)\,\dd u,
\end{equation}
with the latter sum being over the complementary spectrum $\pi_{\nu,0}$ occurring in $L^2(\Gamma\backslash G)$. The form \eqref{EvenTrace-alt} is particularly well suited to geodesic counting, with $\varpi_{\Gamma}^{\ast}$ acting as the density of the length spectrum $[\gamma]$ of $\Gamma$ (not necessarily primitive, and weighted by $\ell(\gamma_0)w(\gamma)$ with $w(\gamma)\asymp_{\Gamma} e^{-\ell(\gamma)}$ as in \eqref{tracen} and \eqref{w-gamma-def}).
\end{remark}

\begin{proof}
Consider the function $F_n:T\to\mathbb{C}$ defined as $F_n (t_{u,\theta}) = g(u) \cos(n \theta)$. This is a smooth, compactly supported function on $T$ invariant under inverses. Therefore, Theorem~\ref{LL} applies to $F_n$; we will explicate each term.
	
On the spectral side,
\begin{align*}
	\hat F_{n}(\chi_{\nu,p}^{-1}) 
	&=\frac{1}{4\pi} \int_{-\infty}^\infty \int_0^{2 \pi} g(u) \big(e^{u\nu+ i(p+n)\theta} + e^{u\nu + i(p-n) \theta}\big) \,\dd\theta\,\dd u\\
	&=\frac{1}{4\pi} \int_{-\infty}^\infty g(u) e^{u\nu} \bigg( \int_0^{2\pi} \big(e^{i(p+n)\theta} + e^{i(p-n)\theta}\big)\,\dd\theta \bigg)\,\dd u.
	\end{align*}
Therefore, for $n \neq 0$, $\hat F_n(\chi_{\nu, \pm n}^{-1}) = \frac{1}{2} \int_{-\infty}^\infty g(u) e^{u \nu} \,\dd u$, and $\hat F_0(\chi_{\nu,0}^{-1}) = \int_{-\infty}^\infty g(u) e^{u \nu} \,\dd u$. If $p \neq \pm n$, then  $\hat F_n(\chi_{\nu,p}^{-1}) = 0$. The contribution of the trivial representation is
\begin{equation}
\label{TDu-theta}
\begin{aligned}
	\frac{1}{2}\int_T |D(t_{u,\theta}^{-1})|^{1/2} F_n(t_{u,\theta}) \,\dd t_{u,\theta}
	&= \frac{1}{4\pi} \int_{-\infty}^\infty \int_0^{2\pi} |e^{u+ i \theta}|\big |1- e^{-(u+i\theta)}\big|^2 g(u) \cos(n\theta) \, \dd\theta \,\dd u\\
	&= \frac{1}{4\pi} \int_{-\infty}^\infty \int_0^{2\pi} (e^u + e^{-u}- 2 \cos \theta)g(u) \cos(n\theta) \,\dd\theta \, \dd u\\
	 &=\delta_0(n)\int_{-\infty}^{\infty}g(u)\cosh u\,\dd u-\frac12\delta_{\pm 1}(n)\int_{-\infty}^{\infty}g(u)\,\dd u,
	\end{aligned}
	\end{equation}
by orthogonality. Note that $\int_{-\infty}^\infty g(u) \cosh u \, \dd u = \int_{-\infty}^\infty g(u) e^{u} \, \dd u$ since $g$ is even.
	
On the geometric side, the contribution of the non-trivial hyperbolic and loxodromic elements is
\[\ell(\gamma_0)|D(t_\gamma^{-1})|^{-1/2} F_n(t_\gamma) = \ell(\gamma_0) |1-e^{\mathbb{C} \ell(\gamma)}|^{-1}  |1-e^{-\mathbb{C} \ell(\gamma)}|^{-1} g(\ell(\gamma)) \cos(n\hol(\gamma)).\]
For the contribution of the identity element on the geometric side, we have
\[- \frac{1}{2 \pi}\vol(\Gamma \backslash G) \Big(\p{^2}{u^2} + \p{^2}{\theta^2}\Big)g(u) \cos(n\theta)\Big|_{u = 0, \theta = 0} = \frac{1}{2 \pi}\vol(\Gamma \backslash G)(n^2 g(0) - g''(0)). \qedhere\]
\end{proof}

Now that we have a formula for the sum of multiplicities $m_{\Gamma}(\pi_{\nu,n})+m_{\Gamma}(\pi_{\nu,-n})$, we also require an understanding of the difference in these multiplicities (both subject to the symmetry $\pi_{\nu,n}\cong\pi_{-\nu,-n}$). This can be achieved by capturing both the length and the holonomy with an odd function.

\begin{theorem}\label{OddTrace} Let $\Gamma<\PSL_2\mathbb{C}$ be a discrete, co-compact, torsion-free subgroup, and let $n\in\mathbb{Z}$. Then, for every smooth, odd, compactly supported function $h:\mathbb{R}\to\mathbb{C}$,
\begin{align*}
&\frac{1}{2} i \sum_{\nu} (m_{\Gamma}(\pi_{\nu,n}) - m_\Gamma(\pi_{\nu,-n})) \int_{-\infty}^\infty h(u) e^{u \nu} \,\dd u\\
	&\qquad = \sum\limits_{[\gamma]} \ell(\gamma_0) |1-e^{\mathbb{C} \ell(\gamma)}|^{-1}  |1-e^{-\mathbb{C} \ell(\gamma)}|^{-1} h(\ell(\gamma)) \sin(n\hol(\gamma)),
\end{align*}
where the terms are defined as in Theorem~\ref{EvenTrace}.

\end{theorem}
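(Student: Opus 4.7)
The plan is to follow exactly the template of the proof of Theorem~\ref{EvenTrace}, applying Theorem~\ref{LL} to the odd test function $F_n:T\to\mathbb{C}$ defined by $F_n(t_{u,\theta}) = h(u)\sin(n\theta)$, in place of the even function $g(u)\cos(n\theta)$ used there. Since $h$ is smooth, odd, and compactly supported, $F_n$ is smooth and compactly supported on $T$; and since both $h(\,\cdot\,)$ and $\sin(n\,\cdot\,)$ are odd, the two sign changes cancel under $t\mapsto t^{-1}$, yielding $F_n(t)=F_n(t^{-1})$. Thus Theorem~\ref{LL} is applicable.

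Next I compute each term of the trace formula. On the spectral side, unpacking $\sin(n\theta)=(e^{in\theta}-e^{-in\theta})/(2i)$ and using the formula $\hat F(\chi)=\int_T F(t)\chi^{-1}(t)\,\dd t$, the Abel transform becomes
\[
\hat F_n(\chi_{\nu,p}^{-1}) = \frac{1}{4\pi i}\int_{-\infty}^\infty h(u)e^{u\nu}\bigg(\int_0^{2\pi}\big(e^{i(p+n)\theta}-e^{i(p-n)\theta}\big)\,\dd\theta\bigg)\,\dd u.
\]
By orthogonality of the characters $\{e^{ip\theta}\}$ on $\mathbb{R}/2\pi\mathbb{Z}$, the inner integral vanishes unless $p=\pm n$ (with $n\neq 0$), in which case it equals $\mp 2\pi$, giving $\hat F_n(\chi_{\nu,\pm n}^{-1})=\mp\frac{1}{2i}\int h(u)e^{u\nu}\,\dd u$. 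Pairing with multiplicities then produces $\frac{i}{2}\sum_\nu (m_\Gamma(\pi_{\nu,n})-m_\Gamma(\pi_{\nu,-n}))\int h(u)e^{u\nu}\,\dd u$, matching the claimed left-hand side. The case $n=0$ is trivial, as $F_0\equiv 0$.

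I then verify that the remaining terms vanish. The trivial-representation term $\frac{1}{2}\int_T|D(t^{-1})|^{1/2}F_n(t)\,\dd t$ factors, after expanding $|D(t_{u,\theta}^{-1})|^{1/2}$ as in the proof of Theorem~\ref{EvenTrace}, through $\theta$-integrals of $\sin(n\theta)$ and $\cos\theta\sin(n\theta)$ against constants, both of which vanish by orthogonality. The identity-element contribution on the geometric side, $-\frac{1}{2\pi}\vol(\Gamma\backslash G)(\partial_u^2+\partial_\theta^2)F_n|_{u=\theta=0}$, vanishes because every second derivative of $F_n$ retains a factor of $\sin(n\theta)$, which is zero at $\theta=0$. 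What remains on the geometric side is precisely $\sum_{[\gamma]}\ell(\gamma_0)|D(t_\gamma^{-1})|^{-1/2}h(\ell(\gamma))\sin(n\hol(\gamma))$, which gives the desired right-hand side upon inserting the Weyl-discriminant identity $|D(t_\gamma^{-1})|^{-1/2}=|1-e^{\mathbb{C}\ell(\gamma)}|^{-1}|1-e^{-\mathbb{C}\ell(\gamma)}|^{-1}$.

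There is no real obstacle; the only subtlety is tracking the factor of $i$ and its sign arising from $\sin(n\theta)=(e^{in\theta}-e^{-in\theta})/(2i)$, which is exactly what produces the leading $\frac{i}{2}$ and the \emph{difference} $m_\Gamma(\pi_{\nu,n})-m_\Gamma(\pi_{\nu,-n})$ (rather than the sum in Theorem~\ref{EvenTrace}). Elliptic and parabolic classes do not contribute because $\Gamma$ is torsion-free and cocompact, so the geometric sum indeed ranges over hyperbolic and loxodromic classes only, as in Theorem~\ref{EvenTrace}.
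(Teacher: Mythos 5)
Your proposal is correct and follows essentially the same route as the paper: apply Theorem~\ref{LL} to $H_n(t_{u,\theta})=h(u)\sin(n\theta)$, compute the Abel transform to isolate the $p=\pm n$ multiplicities with the factor $\pm\frac{i}{2}$, and observe that the trivial-representation and identity terms vanish. Your write-up actually supplies more detail than the paper's terse verification of the vanishing terms, and the sign bookkeeping is accurate throughout.
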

 
 \begin{proof}
This time we consider the function $H_n:T\to\mathbb{C}$ given by $H_n (t_{u,\theta}) = h(u) \sin(n \theta)$. This is a smooth, compactly supported function on $T$ invariant under inverses. Therefore, Theorem~\ref{LL} applies to $H_n$, and we explicate all terms.
As for the spectral terms, we compute
 	\begin{align*}
 	\hat H_n(\chi_{\nu,p}^{-1}) &= \frac{1}{2\pi} \int_{-\infty}^\infty \int_0^{2 \pi} H_n(t_{u,\theta}) e^{u\nu+ ip \theta}\,\dd\theta\,\dd u\\
	& = \frac{1}{2 \pi} \hat h \Big(\frac{i \nu}{2\pi}\Big) \int_0^{2 \pi} \frac{1}{2i} \big(e^{i (p + n) \theta} - e^{i (p - n)\theta}\big)\,\dd \theta.
\end{align*}
By orthogonality, this vanishes unless 
$p=\pm n\neq 0$, in which case
\[ \hat{H}_n(\chi_{\nu,n}^{-1})=\frac12i\hat{h}\Big(\frac{i\nu}{2\pi}\Big),\quad\hat{H}_n(\chi_{\nu,-n}^{-1})=-\frac12i\hat{h}\Big(\frac{i\nu}{2\pi}\Big). \]
Thus the spectral terms contribute $\frac{1}{2} i \sum_{\nu} (m_{\Gamma}(\pi_{\nu,n}) - m_\Gamma(\pi_{\nu,-n})) \hat h({i \nu}/{2 \pi})$, as advertised.
	
The terms corresponding to the trivial representation and the identity element vanish. Indeed, using the evaluation \eqref{TDu-theta} from the proof of Theorem~\ref{EvenTrace} (which in particular shows that $|D(t_{u,\theta}^{-1})|^{1/2}$ is an even function of $\theta$),
\[ \frac{1}{2} \int_T |D(t_{u,\theta}^{-1})|^{1/2} H_n(t_{u,\theta}) \,\dd t_{u,\theta} = \frac{1}{4\pi} \int_{-\infty}^\infty \int_0^{2\pi} (e^u + e^{-u}- 2 \cos \theta)h(u) \sin(n\theta) \, \dd \theta \, \dd u=0, \]
while
\[- \frac{1}{2 \pi}\vol(\Gamma \backslash G) \Big(\p{^2}{u^2} + \p{^2}{\theta^2}\Big)h(u) \sin(n\theta)\Big|_{u = 0, \theta = 0} = 0.
\qedhere \]
\end{proof}

 \subsection{Plancherel measure and bounds on spectral densities}
 
Selberg's trace formula gives a handle on the distribution of the spectrum as the spectral parameters (such as the Laplace eigenvalue) increase, which on quotients of a group such as $G=\PSL_2\mathbb{C}$ is guided by a fixed Plancherel measure, 
depending on $G$ only and supported on the tempered spectrum of $L^2(G)$. In the present situation, keeping in mind the classification from \S\ref{background_sec}, let $\mu_{\pl}$ be the absolutely continuous measure on $i\mathbb{R}\times\mathbb{Z}$ given by
\begin{equation}
\label{plancherel}
\dd\mu_{\pl}(\nu,n)=
\frac1{4\pi^2}(|\nu|^2+n^2)\,|\dd\nu|.
\end{equation}
Then the identity term in Theorem~\ref{EvenTrace} may be rewritten (using Fourier inversion) in the form
\[ \vol(\Gamma\backslash G)\cdot\frac12\int_{i\mathbb{R}}\bigg(\int_{-\infty}^{\infty}g(u)e^{u\nu}\,\dd u\bigg)(\dd\mu_{\pl}(\nu,n)+\dd\mu_{\pl}(\nu,-n)), \]
which should be compared to the cuspidal term (the first sum on the left-hand side of \eqref{tracen}) and may be understood as the leading (or global) term in its geometric expansion, accounting for the spectral density of $G$, and similarly in Theorem~\ref{LL}.

All we need for our application to Theorem~\ref{JointSmallIntervals}
is a uniform estimate on the cardinality of the spectrum in a short window (a familiar step in the derivation of Weyl's law). In this section, we use the even trace formula from Theorem~\ref{EvenTrace} to prove such a local bound for the density of a particular representation type. We emphasize that, while the implied constants in Proposition ~\ref{Weyl} depend on the discrete subgroup $\Gamma$, they are independent of $n$. 
\begin{prop}\label{Weyl}
Let $\Gamma<\PSL_2\mathbb{C}$ be a discrete, co-compact, torsion-free subgroup.
Then, for every $n\in\mathbb{Z}$ and $R\in\mathbb{R}$,
the multiplicities $m_{\Gamma}(\pi_{\nu,n})$ of representations $\pi_{\nu,n}$ in $L^2(\Gamma\backslash G)$
satisfy
\begin{align*}
\sum\limits_{R-1 \leqslant |\nu| \leqslant R+1} m_\Gamma(\pi_{\nu,n})
&\ll\vol(\Gamma\backslash G)\cdot\int_{R-1\leqslant|\nu|\leqslant R+1}\dd\mu_{\pl}(\nu,n)+\mathrm{O}_{\Gamma}(1)\\
&\asymp 
\vol(\Gamma\backslash G)\cdot (R^2+n^2)+\mathrm{O}_{\Gamma}(1).
\end{align*}
\end{prop}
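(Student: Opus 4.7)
The plan is to apply the even trace formula of Theorem~\ref{EvenTrace} with a carefully-chosen smooth test function $g$ whose spectral weight is non-negative on $\mathbb{R}$ and bounded below by $1$ on the window $|\nu|\in[R-1,R+1]$, and whose geometric side is controlled by the identity term of order $\vol(\Gamma\backslash G)(R^2+n^2)$. Non-negativity of the spectral weight allows the spectral side to serve as an upper bound (up to a factor of $2$) for $\sum_{|\nu|\in[R-1,R+1]}m_\Gamma(\pi_{\nu,n})$.

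Concretely, fix once and for all a non-negative, even, smooth, compactly supported function $\phi$ on $\mathbb{R}$, scaled narrowly enough that $\tilde\phi(t):=\int\phi(u)e^{iut}\,\dd u$ satisfies $\tilde\phi(t)\geqslant 1$ on $[-1,1]$. Put $\psi:=\phi\ast\phi$, so that $\tilde\psi=\tilde\phi^{\,2}\geqslant 0$ on $\mathbb{R}$, with $\tilde\psi\geqslant 1$ on $[-1,1]$ and $\psi(0)=\int\phi^2>0$. Set
\[ g(u):=2\psi(u)\cos(Ru), \]
an even, smooth, compactly-supported function whose support and $L^1,L^\infty$-norms are all independent of $R$. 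A direct computation gives
\[ \int_{-\infty}^{\infty}g(u)e^{iut}\,\dd u=\tilde\psi(t-R)+\tilde\psi(t+R)\geqslant 0, \]
bounded below by $1$ whenever $|t|\in[R-1,R+1]$, together with $g(0)=2\psi(0)$ and $g''(0)=2\psi''(0)-2R^2\psi(0)$.

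Feeding this $g$ into Theorem~\ref{EvenTrace}, the principal-series weights $\int g(u)e^{u\nu}\,\dd u$ (for $\nu\in i\mathbb{R}$) are non-negative, and discarding representations outside the target window yields
\[ \sum_{|\nu|\in[R-1,R+1]}m_\Gamma(\pi_{\nu,n})\leqslant 2\cdot\bigl(\text{principal-series sum on the LHS of~\eqref{tracen}}\bigr). \]
Rearranging \eqref{tracen}, the latter equals the geometric side minus the correction terms (the finitely many complementary-series contributions present only when $n=0$, the term $\delta_0(n)\int g(u)e^u\,\dd u$, and the term $\tfrac12\delta_{\pm 1}(n)\hat g(0)$), each of which is $O_\Gamma(1)$ uniformly in $R$ and $n$, since $g$ has fixed compact support and bounded $L^1$-norm and the complementary spectrum of $\Gamma$ is finite. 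The geometric identity term equals
\[ \frac{1}{2\pi}\vol(\Gamma\backslash G)(n^2g(0)-g''(0))=\frac{\psi(0)}{\pi}\vol(\Gamma\backslash G)(R^2+n^2)-\frac{\psi''(0)}{\pi}\vol(\Gamma\backslash G), \]
and the hyperbolic/loxodromic sum involves only the finitely many conjugacy classes $[\gamma]$ with $\ell(\gamma)$ in the (fixed compact) support of $\psi$, contributing $O_\Gamma(1)$ via $w(\gamma)\asymp_\Gamma e^{-\ell(\gamma)}$ and $|\cos(n\hol\gamma)|\leqslant 1$. Assembling these estimates yields the claimed bound, with the main order given by the integral $\int_{R-1\leqslant|\nu|\leqslant R+1}\dd\mu_{\pl}(\nu,n)\asymp R^2+n^2$.

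The sole genuine technical point is ensuring non-negativity of the full spectral weight $\tilde\psi(\cdot-R)+\tilde\psi(\cdot+R)$, so that one may discard representations outside the window; this is precisely why $\psi$ is taken as a self-convolution $\phi\ast\phi$, which forces $\tilde\psi=\tilde\phi^{\,2}\geqslant 0$. Everything else is routine bookkeeping of the terms in the even trace formula.
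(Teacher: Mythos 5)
Your argument is correct and follows essentially the same route as the paper: take a fixed non-negative bump with non-negative Fourier transform, modulate by $\cos(Ru)$ to shift the spectral weight to $|\nu|\approx R$, use non-negativity to drop the spectrum outside the window, and read off the main term from the identity contribution $\frac{1}{2\pi}\vol(\Gamma\backslash G)(n^2g(0)-g''(0))$ while the finitely many geodesic terms in $\supp g$ and the complementary-series/low-order corrections are $\mathrm{O}_{\Gamma}(1)$. The only cosmetic difference is that you construct the base function explicitly as a self-convolution $\phi*\phi$ (so $\tilde\psi=\tilde\phi^2\geqslant 0$), whereas the paper simply posits a $g$ with $\hat g\geqslant 0$ on $\mathbb{R}\cup i\mathbb{R}$; the two are interchangeable.
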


\begin{proof}
Let $g$ be a smooth, even, non-negative, compactly supported function such that $\hat{g}(t)\geqslant 0$ for all $t\in\mathbb{R}\cup i\mathbb{R}$, as well as $\hat g(t) \geqslant 1$ for $|t|\leqslant 1/(2\pi)$. Consider the function
\[ g_R(x) = 2 \cos (R x) g(x). \]
Its Fourier transform $\hat g_R(t) = \hat g(t-R/2\pi) + \hat g( t+R/2\pi)$ has the property that $\hat g_R(t) \geqslant 1$ for $R-1\leqslant 2\pi|t|\leqslant R+1$. Therefore, we have the bound:
\[ \sum\limits_{R-1 \leqslant |\nu| \leqslant R+1} m_\Gamma(\pi_{\nu,n}) \leqslant \sum\limits_{\nu} m_\Gamma(\pi_{\nu,n}) \hat g_R(i\nu/2\pi) + \mathrm{O}_\Gamma(1). \]	

By Theorem~\ref{EvenTrace}, the right-hand side of this estimate equals
\begin{align*}
&-\delta_0(n)\int_{-\infty}^{\infty}g_R(u)e^u\,\dd u+
\frac{1}{2} \delta_{\pm 1}(n) \hat g_R(0)+ \frac{1}{2\pi} \text{vol}(\Gamma \backslash G) (n^2 g_R(0) - g_R''(0))\\
&\qquad +\sum\limits_{[\gamma]} \ell(\gamma_0) |1 - e ^{\mathbb{C} \ell(\gamma)}|^{-1}  |1 - e ^{-\mathbb{C} \ell(\gamma)}|^{-1} g_R(\ell(\gamma)) \cos (n \hol \gamma)+ \mathrm{O}_\Gamma(1).
\end{align*}
The first two terms contribute $\mathrm{O}(1)$. For the fourth term, $g_R$ is a compactly supported function with $\supp g_R\subseteq\supp g$ and $|g_R|\leqslant|g|$, so this sum contains $\mathrm{O}_{\Gamma}(1)$ terms and contributes $\mathrm{O}_{\Gamma}(1)$. Further, we calculate that
\[ g_R''(x) = 2 \cos(R x) g''(x) - 4 R \sin(R x) g'(x) - 2 R^2 \cos(R x) g(x), \]
so that the third term contributes $\frac1{\pi}\vol(\Gamma\backslash G)\cdot((n^2+R^2)g(0)-g''(0))\ll\vol(\Gamma\backslash G)\cdot (R^2+n^2+1)$. Thus,
\[ \sum_{R-1\leqslant|\nu|\leqslant R+1}m_{\Gamma}(\pi_{\nu,n})\ll \vol(\Gamma\backslash G)\cdot (R^2+n^2+1)+\mathrm{O}_{\Gamma}(1), \]
which completes the proof since the remaining claims are immediate.
\end{proof}

An immediate corollary of Proposition~\ref{Weyl} is the following:

\begin{corr}
Let $\Gamma<\PSL_2\mathbb{C}$ be a discrete, co-compact, torsion-free subgroup.
Then, for every $n\in\mathbb{Z}$ and $R\geqslant 1$, the multiplicities $m_{\Gamma}(\pi_{\nu,n})$ of representations $\pi_{\nu,n}$ in $L^2(\Gamma\backslash G)$ satisfy
\[\sum\limits_{|\nu| \leqslant R} m_\Gamma(\pi_{\nu, n}) \ll \vol(\Gamma\backslash G)\cdot (R^3 + n^2 R)+\mathrm{O}_{\Gamma}(R). \]
\end{corr}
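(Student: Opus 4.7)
The plan is to deduce the corollary from Proposition~\ref{Weyl} by a straightforward dyadic (or rather unit-length) decomposition of the interval $[0,R]$ in the $|\nu|$-parameter.

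First, I would cover the interval $\{|\nu|\leqslant R\}$ by roughly $\lceil R\rceil+1$ windows of the form $\{R_k-1\leqslant |\nu|\leqslant R_k+1\}$ for $R_k=0,2,4,\dots$ up to the first such value exceeding $R$. (Any bounded overlap between consecutive windows contributes only a constant factor.) By Proposition~\ref{Weyl} applied in each window,
\[
\sum_{R_k-1\leqslant |\nu|\leqslant R_k+1} m_{\Gamma}(\pi_{\nu,n})\ll \vol(\Gamma\backslash G)\cdot (R_k^2+n^2)+\mathrm{O}_{\Gamma}(1).
\]

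Summing over the $\mathrm{O}(R)$ windows, the three contributions are
\[
\sum_{k} R_k^2\asymp R^3,\qquad \sum_k n^2\asymp n^2 R,\qquad \sum_k 1\asymp R,
\]
so that the total is bounded by
\[
\sum_{|\nu|\leqslant R} m_{\Gamma}(\pi_{\nu,n})\ll \vol(\Gamma\backslash G)\cdot (R^3+n^2 R)+\mathrm{O}_{\Gamma}(R),
\]
which is the claimed bound. No step here is genuinely hard; the only thing to verify is the bookkeeping on the number of windows and that the implied constants in Proposition~\ref{Weyl} are uniform in $R$ and $n$, both of which are clear from its statement.
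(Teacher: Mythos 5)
Your proof is correct and is exactly the argument the paper has in mind: the paper states the corollary without proof, labeling it an ``immediate corollary'' of Proposition~\ref{Weyl}, and the intended reasoning is precisely this unit-window covering of $\{|\nu|\leqslant R\}$ followed by summation. Your bookkeeping (about $R/2$ windows, $\sum_k R_k^2\asymp R^3$, etc.) checks out, and the uniformity of the implied constants in Proposition~\ref{Weyl} is indeed stated there.
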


\section{Sampling the length spectrum using the trace formula}
\label{trace-formula-sec}

In this section, we prove estimates on certain sums that naturally appear on the geometric side of the trace formulas in Theorems~\ref{EvenTrace} and \ref{OddTrace} when one is interested in sampling geodesics $C_{\gamma}$ on $M$, controlling their length $\ell(\gamma)$ close to or up to a specific length $y$, and detecting their holonomy with a character $e^{in\hol(\gamma)}$.

For suitable even and odd sampling functions $g,h:\mathbb{R}\to\mathbb{R}$, respectively, Theorems~\ref{EvenTrace} and \ref{OddTrace} give a handle on sums
\begin{equation}
\label{smooth-sums}
\begin{aligned}
T_n^{\cos}[g]&=\sum_{[\gamma]}\ell(\gamma_0)g(\ell(\gamma))w(\gamma)\cos(n\hol(\gamma)),\\
T_n^{\sin}[h]&=\sum_{[\gamma]}\ell(\gamma_0)h(\ell(\gamma))w(\gamma)\sin(n\hol(\gamma)),
\end{aligned}
\end{equation}
where the sums are over the non-trivial hyperbolic and loxodromic conjugacy classes $[\gamma]$ of $\Gamma$, $\ell(\gamma_0)$ refers to the length of the geodesic corresponding to the primitive element $\gamma_0$ which generates $\gamma$, and
\begin{equation}
\label{w-gamma-def}
w(\gamma)=|1-e^{\mathbb{C}\ell(\gamma)}|^{-1}|1-e^{-\mathbb{C}\ell(\gamma)}|^{-1}.
\end{equation}

Let $\psi: \mathbb{R}\to\mathbb{R}_{\geqslant 0}$ be a fixed, smooth non-negative function that is compactly supported on $[-1,1]$
and satisfies $\|\psi\|_1=1$ and $\psi\geqslant\frac12$ on $[-\frac12,\frac12]$, and for $\eta>0$, define
\begin{equation}\label{psi-eta-def}
\psi_\eta(t):= \frac{1}{\eta} \psi\Big(\frac{t}{\eta}\Big).
\end{equation}

For $y>0$, define the sampling functions $g_{y,\eta},h_{y,\eta}:\mathbb{R}\to\mathbb{R}$ as convolutions
\begin{equation}
\label{smooth-cutoff-functions}
g_{y,\eta}=\psi_{\eta}\star\chi_{[-y,y]},\quad h_{y,\eta}= \psi_\eta \star (\chi_{[-y,y]} \cdot \sgn).
\end{equation}
In \S\ref{trace-estimates-sec}, we use the non-spherical trace formulas of Theorems~\ref{EvenTrace} and \ref{OddTrace} to prove estimates on $T_n^{\cos}[g_{y,\eta}]$ and $T_n^{\sin}[h_{y,\eta}]$. In \S\ref{sharp-length-sec}, we execute the passage from these smooth counts to the sharp counts
\begin{equation}
\label{sharp-sums}
\begin{alignedat}{5}
&T_n^{\cos}(y)&&:=T_n^{\cos}[\chi_{[-y,y]}]&&=\sum_{\ell(\gamma)\leqslant y}\ell(\gamma_0)w(\gamma)\cos(n\hol(\gamma)),\\
&T_n^{\sin}(y)&&:=T_n^{\sin}[\chi_{[-y,y]}\cdot\sgn] &&= \sum_{\ell(\gamma)\leqslant y}\ell(\gamma_0)w(\gamma)\sin(n\hol(\gamma)).
\end{alignedat}
\end{equation}

\subsection{Trace formula estimates}
\label{trace-estimates-sec}
In this section, we use Theorems~\ref{EvenTrace} and \ref{OddTrace} to prove in Lemma~\ref{trace-estimates-lemma} estimates on the smooth counts $T_n^{\cos}[g_{y,\eta}]$ and $T_n^{\sin}[h_{y,\eta}]$ defined in \eqref{smooth-sums}.

\begin{lemma}
\label{trace-estimates-lemma}
Let $\Gamma<\PSL_2\mathbb{C}$ be a discrete, co-compact, torsion-free subgroup, let $n\in\mathbb{Z}$, and let $y,\eta>0$. Then the sums $T_n^{\cos}[g_{y,\eta}]$ and $T_n^{\sin}[h_{y,\eta}]$ defined in \eqref{smooth-sums}, with $g_{y,\eta}$ and $h_{y,\eta}$ as in \eqref{smooth-cutoff-functions}, satisfy
\begin{align*}
T_n^{\cos}[g_{y,\eta}]&=\delta_0(n)\int_{-\infty}^{\infty}g_{y,\eta}(u)\,\dd\varpi_{\Gamma}^{\ast}(u)+\mathrm{O}_{\Gamma}\left(\frac1{\eta^2}+(1+n^2)\Big(\log^{\ast}\frac1{\eta}+y\Big)\right),\\
T_n^{\sin}[h_{y, \eta}] &= \mathrm{O}_\Gamma\left(\frac{1}{\eta^2} + (1+n^2) \Big(\log^{\ast}\frac{1}{\eta}+y\Big)\right),
\end{align*}
where $\varpi_{\Gamma}^{\ast}$ is as in \eqref{density-star-def} and $\log^{\ast}x=\log(2+x)$.
\end{lemma}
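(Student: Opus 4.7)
The plan is to apply the non-spherical trace formulas directly to the sampling functions $g_{y,\eta}$ and $h_{y,\eta}$ and to bound all non-main terms via the local Plancherel density estimate of Proposition~\ref{Weyl}.

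For the first bound, I would apply Theorem~\ref{EvenTrace} in the form \eqref{EvenTrace-alt} with $g = g_{y,\eta}$. Solving for the geometric sum identifies $T_n^{\cos}[g_{y,\eta}]$ as the main term $\delta_0(n)\int g_{y,\eta}\,\dd\varpi_{\Gamma}^{\ast}$ plus (i)~the principal-series sum $\tfrac{1}{2}\sum_{\nu\in i\mathbb{R}}(m_{\Gamma}(\pi_{\nu,n})+m_{\Gamma}(\pi_{\nu,-n}))\hat{g}_{y,\eta}(i\nu/2\pi)$, (ii)~the boundary correction $-\tfrac{1}{2}\delta_{\pm 1}(n)\hat{g}_{y,\eta}(0)$, and (iii)~minus the identity term $\tfrac{1}{2\pi}\vol(\Gamma\backslash G)(n^2 g_{y,\eta}(0) - g_{y,\eta}''(0))$. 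From $g_{y,\eta}(t)=\int_{t-y}^{t+y}\psi_\eta(s)\,\dd s$ one reads off $g_{y,\eta}(0)\leqslant 1$ and $g_{y,\eta}''(0)=\psi_\eta'(y)-\psi_\eta'(-y)=\mathrm{O}(1/\eta^2)$, so (iii) contributes $\mathrm{O}_{\Gamma}(n^2+1/\eta^2)$, while (ii) contributes $\mathrm{O}(y\delta_{\pm 1}(n))\ll (1+n^2)y$; both are within budget.

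The crux is bounding (i). Writing $\hat{g}_{y,\eta}(\xi)=\hat{\psi}(\eta\xi)\hat{\chi}_{[-y,y]}(\xi)$ yields the pointwise envelope $|\hat{g}_{y,\eta}(\xi)|\ll_N(1+\eta|\xi|)^{-N}\min(y,1/|\xi|)$ for any $N\geqslant 0$. I would then dyadically split the sum by $|\nu|$. The bounded block $|\nu|\leqslant 1$ contains $\mathrm{O}_{\Gamma}(1+n^2)$ representations by Proposition~\ref{Weyl}, each contributing $\mathrm{O}(y)$, for a total of $\mathrm{O}_{\Gamma}((1+n^2)y)$. For a dyadic range $R\leqslant|\nu|\leqslant 2R$ with $1\leqslant R\leqslant 1/\eta$, summing Proposition~\ref{Weyl} across the $\asymp R$ unit windows accounts for $\mathrm{O}_{\Gamma}(R(R^2+n^2)+R)$ representations, each weighted by $\mathrm{O}(1/R)$, for a block contribution of $\mathrm{O}_{\Gamma}(R^2+n^2+1)$; summing dyadically over $R\leqslant 1/\eta$ yields $\mathrm{O}_{\Gamma}(1/\eta^2+(1+n^2)\log^{\ast}(1/\eta))$. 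For $|\nu|>1/\eta$ the Schwartz factor $(1+\eta|\nu|)^{-N}$ absorbs the Plancherel growth for $N$ sufficiently large, contributing an additional $\mathrm{O}_{\Gamma}(1/\eta^2+n^2)$. Collecting all pieces produces the claimed bound.

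For the second bound, I would apply Theorem~\ref{OddTrace} with $h=h_{y,\eta}$. That formula contains neither an identity nor a main term, so only the spectral sum $\tfrac{1}{2}i\sum_{\nu}(m_{\Gamma}(\pi_{\nu,n})-m_{\Gamma}(\pi_{\nu,-n}))\hat{h}_{y,\eta}(i\nu/2\pi)$ needs treatment. Since $\hat{h}_{y,\eta}(\xi)=\hat{\psi}(\eta\xi)\widehat{\chi_{[-y,y]}\cdot\sgn}(\xi)$ obeys the same envelope as $\hat{g}_{y,\eta}$, the identical dyadic argument delivers the claim. I expect the main obstacle to be the dyadic bookkeeping in step~(i): one must carefully balance the $R^2+n^2$ local spectral count against the $\min(y,1/R)$ envelope of $\hat{g}_{y,\eta}$ across the three regimes (bounded range, polynomial range up to $|\nu|\sim 1/\eta$, Schwartz tail) to obtain a bound symmetric between the length-dual parameter $1/\eta$ and the holonomy parameter $|n|$, reflecting the symmetry of the Plancherel measure \eqref{plancherel}.
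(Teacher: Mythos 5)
Your proposal is correct and follows essentially the same route as the paper: apply Theorem~\ref{EvenTrace} (resp.\ Theorem~\ref{OddTrace}) to $g_{y,\eta}$ (resp.\ $h_{y,\eta}$), read off the main and identity/boundary terms, use the envelope $|\hat g_{y,\eta}(\xi)|\ll_N(1+\eta|\xi|)^{-N}\min(y,1/|\xi|)$ from the convolution structure, and then invoke the local density bound of Proposition~\ref{Weyl} over windows of the spectral parameter. The only cosmetic difference is that you organize the spectral sum into dyadic blocks rather than the paper's unit-length windows, and you retain the (harmless) $g_{y,\eta}''(0)=\mathrm{O}(1/\eta^2)$ term explicitly; both choices yield identical final bounds.
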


\begin{proof}
Using Theorem~\ref{EvenTrace} for $g_{y,\eta}$, in the form \eqref{EvenTrace-alt}, we know that
\begin{equation}
\label{Tncos-start}
\begin{aligned}
&T_n^{\cos}[g_{y,\eta}]-\delta_0(n)\int_{-\infty}^{\infty}g_{y,\eta}(u)\,\dd\varpi_{\Gamma}^{\ast}(u)
=\frac{1}{2} \sum\limits_{\nu\in i\mathbb{R}} (m_\Gamma(\pi_{\nu,n}) + m_\Gamma(\pi_{\nu,-n})) \hat g_{y,\eta}\Big(\frac{i\nu}{2 \pi}\Big)\\
&\qquad -\frac{1}{2} \delta_{\pm 1}(n) \hat g_{y,\eta}(0) - \frac{1}{2\pi} \text{ vol}(\Gamma \backslash G)(n^2 g_{y,\eta}(0) - g_{y,\eta}''(0)).
\end{aligned}
\end{equation}
With our choice of $g_{y,\eta}$, the right-hand side of \eqref{Tncos-start} equals
\[ \frac{1}{2} \sum\limits_{\nu\in i\mathbb{R}} (m_\Gamma(\pi_{\nu,n}) + m_\Gamma(\pi_{\nu,-n})) \hat g_{y,\eta}\Big(\frac{i\nu}{2 \pi}\Big)-\delta_{\pm 1}(n)y
+\mathrm{O}\Big(\vol(\Gamma\backslash G)\Big(n^2+\frac1{\eta^2}\Big)\Big). \]
(When $\eta< y$, the error term $\mathrm{O}(\vol(\Gamma\backslash G)/\eta^2)$ is not needed here since then $g_{y,\eta}''(0)=0$; however we incur this term later regardless.)

For the spectral contribution, we first compute that the Fourier transform for $\nu=it$ ($t\in\mathbb{R}$) is
\[ \hat g_{y,\eta}\Big(\frac{-t}{2\pi}\Big) = 2 \hat \psi\Big(\frac{-\eta t}{2\pi}\Big) \frac{\sin( ty)}{ t}. \]
Using the Schwartz bound $|\hat \psi(-\eta t/2\pi)|\ll_m1/(1+\eta |t|)^m$ with (say) $m=3$, we obtain
		
	\begin{align*}
	\frac{1}{2} \sum\limits_{\nu=it} m_\Gamma(\pi_{\nu,\pm n})\hat g_{y,\eta}\Big(\frac{i\nu}{2\pi}\Big)
	&=\bigg(\sum_{0\leqslant |t|<1}+\sum_{1\leqslant |t|\leqslant 1/\eta}+\sum_{|t|>1/\eta}\bigg) m_\Gamma(\pi_
	{it,\pm n})
\hat \psi\Big(\frac{-\eta t}{2 \pi}\Big) \frac{\sin(ty)}{ t} \\
	&\ll y\sum_{0\leqslant |t|<1}m_{\Gamma}(\pi_{it,\pm n})+\sum_{1\leqslant k \leqslant 1/\eta} \frac{1}{k} \sum_{k\leqslant |t| < k+1}m_\Gamma(\pi_{it,\pm n})\\
	&\qquad+ \sum_{k >1/{\eta}} \frac{1}{k} \frac{1}{(\eta k)^3} \sum_{k\leqslant |t| < k+1} m_\Gamma(\pi_{it,\pm n}).
	\end{align*}
	
Bounding the multiplicities $m_{\Gamma}(\pi_{\nu,\pm n})$ using the uniform local bound of Proposition ~\ref{Weyl}, we find that the above is 
\begin{align*}
	&\ll\vol(\Gamma\backslash G)\bigg[y(1+n^2)+\sum_{k \leqslant \frac{1}{\eta}} \Big(k + \frac{n^2}{k}\Big) +  \sum_{k > \frac{1}{\eta}} \Big(\frac{1}{\eta^3 k^2} + \frac{n^2}{\eta^3 k^4}\Big)\bigg]\\
	&\qquad+\mathrm{O}_{\Gamma}\bigg(y+\sum_{1\leqslant k\leqslant 1/\eta}\frac1k+\sum_{k>1/\eta}\frac1{\eta^3k^4}\bigg)\\
	&\ll\vol(\Gamma\backslash G)\bigg[y(1+n^2)+\frac{1}{\eta^2} + n^2 \log^{\ast}\frac{1}{\eta}\bigg]+\mathrm{O}_{\Gamma}\Big(y+\log^{\ast}\frac1{\eta}\Big).
	\end{align*}
Putting everything together completes the proof for the even case.

The odd case is completely analogous and in fact easier, since the trace formula from Theorem \ref{OddTrace} has only the principal series spectral and non-identity geometric terms. Indeed, we compute that
\[ \hat{h}_{y,\eta}\Big(\frac{-t}{2\pi}\Big)=4\hat{\psi}\Big(\frac{-\eta t}{2\pi}\Big)\frac{\sin^2(ty/2)}{it}, \]
so that using the Schwartz bound $|\hat \psi(-\eta t/2\pi)|\ll_m1/(1+\eta |t|)^m$ and $|\sin^2(ty/2)/it|\ll\min(ty^2,1/t)\ll\min(y,1/t)$, the estimates proceed as above.
\end{proof}

\begin{remark}
Theorems~\ref{EvenTrace} and \ref{OddTrace} allow for good control over the dependence in $\Gamma$, as we show in Proposition ~\ref{Weyl} and then using this result in the proof of Lemma~\ref{trace-estimates-lemma}. For example, the dependence in the leading terms is often guided by $\vol(\Gamma\backslash G)$ only. This is a very important feature when the group $\Gamma$ varies or where uniformity in $\Gamma$ is required (say, for a varying level in a congruence group). Since $\Gamma$ is fixed for us, from now on we combine all dependence on $\Gamma$, as in the statement of Lemma~\ref{trace-estimates-lemma} and beyond.
\end{remark}

\subsection{Passage to sharp cutoff in length}
\label{sharp-length-sec}
In this section, we pass from the smooth counts for $T_n^{\cos}[g_{y,\eta}]$ and $T_n^{\sin}[h_{y,\eta}]$ of Lemma~\ref{trace-estimates-lemma} to a sharp count for $T_n^{\cos}(y)=T_n^{\cos}[\chi_{[-y,y]}]$ and $T_n^{\sin}(y) = T_n^{\sin}[\chi_{[-y,y]}\cdot\sgn]$ as shown in \eqref{sharp-sums}. This passage requires further use of Theorems~\ref{EvenTrace} and \ref{OddTrace} as well as taking $\eta>0$ small in Lemma~\ref{trace-estimates-lemma}, which typically ends up being the main source of the error terms.

The passage to the sharp geodesic count 
relies primarily on estimating contributions from classes in the transition zone $y-\eta\leqslant\ell(\gamma)\leqslant y+\eta$, which contains the range where $g_{y,\eta}(\ell(\gamma))\neq\chi_{[-y,y]}$. This is achieved in the following key lemma, which is the geometric side analogue, in the length aspect, of Proposition ~\ref{Weyl}.

\begin{lemma}
\label{geodesics-upper-bound}
Let $\Gamma<\PSL_2\mathbb{C}$ be a discrete, co-compact, torsion-free subgroup, and let $y,\eta>0$. Then,
\begin{equation}
\label{geodesics-upper-bound-formula}
\sum_{y-\eta\leqslant\ell(\gamma)\leqslant y+\eta}\ell(\gamma_0)w(\gamma)\ll \int_{y-2\eta}^{y+2\eta}\dd\varpi_{\Gamma}^{\ast}(u)+\mathrm{O}_{\Gamma}\Big(\frac1{\eta^2}+\eta\Big),
\end{equation}
where $w(\gamma)$ is as in \eqref{w-gamma-def} and $\varpi_{\Gamma}^{\ast}$ is as in \eqref{density-star-def}.
\end{lemma}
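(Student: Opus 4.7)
The plan is to bound the sum in \eqref{geodesics-upper-bound-formula} by the smooth count $T_0^{\cos}[\phi_{y,\eta}]$ from \eqref{smooth-sums} for a suitable smooth, even, non-negative majorant $\phi_{y,\eta}$, and then to evaluate this count via Theorem~\ref{EvenTrace} with $n=0$ (in the form \eqref{EvenTrace-alt}) and estimate the resulting spectral terms using the Schwartz decay of $\hat\phi_{y,\eta}$ together with the local density bound of Proposition~\ref{Weyl}. This is the geometric-side, length-aspect analogue of the smooth-to-sharp passage carried out spectrally in Proposition~\ref{Weyl}.

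Concretely, I would fix an even $\Psi \in C_c^{\infty}(\mathbb{R})$ with $0 \leqslant \Psi \leqslant 1$, $\Psi \equiv 1$ on $[-1,1]$, and $\supp \Psi \subseteq [-2,2]$. Assuming $y > 2\eta$ (the complementary regime involves only $\mathrm{O}_\Gamma(1)$ classes with $\ell(\gamma) \leqslant 3\eta$, harmlessly absorbed since $\max(1/\eta^2, \eta) \geqslant 1$), I set
$$\phi_{y,\eta}(t) := \Psi\Big(\frac{t-y}{\eta}\Big) + \Psi\Big(\frac{t+y}{\eta}\Big).$$
This function is smooth, even, non-negative, majorizes $\chi_{[y-\eta, y+\eta]}$ on $[0,\infty)$, vanishes to all orders at the origin, and is supported in $[-y-2\eta,-y+2\eta]\cup[y-2\eta,y+2\eta]$. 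Since $w(\gamma), \ell(\gamma_0) \geqslant 0$, the left-hand side of \eqref{geodesics-upper-bound-formula} is bounded above by $T_0^{\cos}[\phi_{y,\eta}]$.

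Applying Theorem~\ref{EvenTrace} with $n=0$ and $g = \phi_{y,\eta}$, and using $\phi_{y,\eta}(0) = \phi_{y,\eta}''(0) = 0$ to kill the identity contribution, I obtain
$$T_0^{\cos}[\phi_{y,\eta}] = \int_{\mathbb{R}} \phi_{y,\eta}(u)\,\dd\varpi_\Gamma^{\ast}(u) + \sum_{\nu \in i\mathbb{R}} m_\Gamma(\pi_{\nu,0})\,\hat\phi_{y,\eta}\Big(\frac{i\nu}{2\pi}\Big).$$
The geometric integral splits by the support of $\phi_{y,\eta}$: on the positive branch, $\phi_{y,\eta} \leqslant 1$ gives at most $\int_{y-2\eta}^{y+2\eta}\dd\varpi_\Gamma^{\ast}$, which is the target main term; on the negative branch, the density $e^u + \sum_{\nu \in (0,1)} m_\Gamma(\pi_{\nu,0}) e^{u\nu}$ from \eqref{density-star-def} is uniformly $\mathrm{O}_\Gamma(1)$ for $u < 0$, contributing only $\mathrm{O}_\Gamma(\eta)$.

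For the spectral sum, I would compute
$$\hat\phi_{y,\eta}\Big(-\frac{t}{2\pi}\Big) = 2\eta\,\hat\Psi\Big(-\frac{\eta t}{2\pi}\Big)\cos(ty),$$
apply the Schwartz bound $|\hat\Psi(-\eta t/2\pi)| \ll_m (1+\eta|t|)^{-m}$ (say $m=4$), break the sum over $\nu = it$ dyadically, and invoke Proposition~\ref{Weyl} at $n=0$ (which gives $\sum_{k \leqslant |t| \leqslant k+1} m_\Gamma(\pi_{it,0}) \ll k^2 + \mathrm{O}_\Gamma(1)$). The bulk range $k \leqslant 1/\eta$ yields $\eta \sum_{k \leqslant 1/\eta} k^2 \ll 1/\eta^2$, the tail $k > 1/\eta$ contributes the same order via $\eta^{-3}\sum_{k>1/\eta}k^{-2} \ll \eta^{-2}$, and the $\mathrm{O}_\Gamma(1)$ part of Weyl produces an $\mathrm{O}_\Gamma(1)$ term. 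Collecting everything and absorbing $\mathrm{O}_\Gamma(1)$ into $\mathrm{O}_\Gamma(1/\eta^2 + \eta)$ (using $\max(1/\eta^2,\eta) \geqslant 1$) gives the bound. The one point to check carefully is that the oscillatory factor $\cos(ty)$ does not inject any $y$-dependence into the spectral estimate; this is precisely guaranteed by the Schwartz decay of $\hat\Psi$, which bounds $|\hat\phi_{y,\eta}|$ uniformly in $y$.
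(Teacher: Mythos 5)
Your proof follows essentially the same route as the paper's: construct a smooth, even, non-negative majorant supported in $\pm[y-2\eta,y+2\eta]$, feed it into the $n=0$ case of Theorem~\ref{EvenTrace} (the spherical trace formula), and estimate the resulting spectral sum via the Schwartz decay of the majorant's Fourier transform combined with the local density bound of Proposition~\ref{Weyl}. The paper's majorant is $f_{y,2\eta}(x)=\psi((x-y)/2\eta)+\psi((x+y)/2\eta)$, with $\psi$ from \eqref{psi-eta-def} (so $f_{y,2\eta}\gg 1$ on $\pm[y-\eta,y+\eta]$); your $\Psi$ with $\Psi\equiv 1$ on $[-1,1]$ and $\supp\Psi\subseteq[-2,2]$ is a cosmetic variant, and the Fourier transform/Weyl/dyadic estimate is identical.

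The one genuine flaw is your dismissal of the regime $y\leqslant 2\eta$. You assert that this "involves only $\mathrm{O}_\Gamma(1)$ classes with $\ell(\gamma)\leqslant 3\eta$, harmlessly absorbed since $\max(1/\eta^2,\eta)\geqslant 1$." That is false when $\eta$ is large: since $\ell(\gamma_0)w(\gamma)\asymp_\Gamma\ell(\gamma_0)e^{-\ell(\gamma)}$ and the number of classes with $\ell(\gamma)\leqslant R$ grows like $e^{2R}/R$, one has $\sum_{\ell(\gamma)\leqslant 3\eta}\ell(\gamma_0)w(\gamma)\asymp_\Gamma e^{3\eta}$, which is far from $\mathrm{O}_\Gamma(1)$ and is not absorbed into $\mathrm{O}_\Gamma(1/\eta^2+\eta)$. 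The lemma as stated allows arbitrary $\eta>0$. In fact the case split is unnecessary: the identity term is $\frac{1}{2\pi}\vol(\Gamma\backslash G)\,\phi_{y,\eta}''(0)$, and since $\phi_{y,\eta}''(0)=(2/\eta^2)\Psi''(y/\eta)=\mathrm{O}(1/\eta^2)$ regardless of whether $y>2\eta$, it is always absorbed into $\mathrm{O}_\Gamma(1/\eta^2)$; likewise the negative branch of $\int\phi_{y,\eta}\,\dd\varpi_\Gamma^\ast$ contributes $\mathrm{O}_\Gamma(\eta)$ plus, when $y<2\eta$, a positive-$u$ piece that is already dominated by $\int_{y-2\eta}^{y+2\eta}\dd\varpi_\Gamma^\ast$. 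This is exactly how the paper proceeds (it notes the identity term "may be absorbed in $\mathrm{O}_\Gamma(1/\eta^2)$", vanishing only "in a typical application with $\eta<y/2$"). Replacing your case split by this unconditional bound fixes the gap and yields the paper's proof.
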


\begin{proof}
We will sample the geodesics in the range $\ell(\gamma)\in [y-\eta,y+\eta]$ using an even, majorant function
\begin{equation}
\label{majorant-f}
f_{y,2\eta}(x)=\psi\Big(\frac{x-y}{2\eta}\Big)+\psi\Big(\frac{x+y}{2\eta}\Big),
\end{equation}
with a smooth, non-negative bump function $\psi$ as in \eqref{psi-eta-def}. By the definition of $\psi$, $f_{y,2\eta}:\mathbb{R}\to\mathbb{R}$ is a smooth, even, non-negative, absolutely bounded function supported on $\pm[y-2\eta,y+2\eta]$ and satisfying $f_{y,2\eta}\gg 1$ on $\pm [y-\eta,y+\eta]$, so that
\[ \sum_{y-\eta\leqslant\ell(\gamma)\leqslant y+\eta}\ell(\gamma_0)w(\gamma)\ll\sum_{[\gamma]}\ell(\gamma_0)w(\gamma)f_{y,2\eta}(\ell(\gamma)). \]
	
Using the even trace formula of Theorem~\ref{EvenTrace}, with $n=0$ and in the form \eqref{EvenTrace-alt}, we obtain
\begin{align*}
&\sum_{[\gamma]}\ell(\gamma_0)w(\gamma)f_{y,2\eta}(\ell(\gamma))\\
&\qquad =
\int_{-\infty}^\infty f_{y,2\eta}(u)\,\dd\varpi_{\Gamma}^{\ast}(u) + \sum_{\nu\in i\mathbb{R}}m_\Gamma(\pi_{\nu,0}) \hat f_{y, 2 \eta}\Big(\frac{i \nu}{2\pi}\Big) + \frac{1}{2 \pi} \text{ vol}(\Gamma \backslash G) f_{y, 2 \eta}''(0).
\end{align*}
Note that, at this point, we are only using the familiar spherical trace formula. We estimate the first term using $f_{y,2\eta}\ll 1$ and the support condition on $f_{y,2\eta}$. The third term may be absorbed in $\mathrm{O}_{\Gamma}(1/\eta^2)$. (In fact, in a typical application with $\eta<y/2$, this term vanishes.)

For the principal series representations $\pi_{it, 0}$ ($t \in \mathbb{R}$), the Fourier transform $\hat f_{y, 2 \eta}(t)= 4\eta \cos(2 \pi ty) \hat \psi(2\eta t)$ satisfies the Schwartz bound $|\hat f_{y, 2 \eta}(-t/2\pi)|\ll \eta/(1+\eta|t|)^4$. Bounding the multiplicities $m_{\Gamma}(\pi_{it,0})$ using the uniform bound of Proposition ~\ref{Weyl}, we may finally bound the contribution of the principal series representations as
\begin{align*}
\sum_{\nu=it}\hat f_{y, 2 \eta}(-t/2\pi) m_\Gamma(\pi_{it,0}) &=
\bigg(\sum_{0\leqslant k<1/\eta}+\sum_{k>1/\eta}\bigg)\sum\limits_{k\leqslant |t| < k+1} \hat f_{y, 2 \eta}(-t/2\pi) m_\Gamma(\pi_{it,0})\\
&\ll_{\Gamma}\eta \sum_{0\leqslant k< 1/\eta} (k^2+1) + \frac{1}{\eta^3} \sum_{k>1/{\eta}} \frac{1}{k^2}\ll\frac1{\eta^2}+\eta.
\end{align*}
Combining everything completes the proof.
\end{proof}

\begin{remark}
\label{typical-regime}
In the typical regime for the application of Lemma~\ref{geodesics-upper-bound}, when $\eta\ll 1\ll y-\eta$, we have in \eqref{geodesics-upper-bound-formula} simply $w(\gamma)\asymp_\Gamma e^{-\ell(\gamma)}$ and $|\dd\varpi_{\Gamma}^{\ast}(u)/\dd u|\asymp_{\Gamma} e^u$, so that Lemma~\ref{geodesics-upper-bound} states that
\[ \sum_{y-\eta\leqslant\ell(\gamma)\leqslant y+\eta}\ell(\gamma_0)\ll_{\Gamma}e^y\Big(\eta e^y+\frac1{\eta^2}\Big). \]
This will be the case, in particular, in the proof of Proposition ~\ref{trace-estimates-sharp}.
\end{remark}

\begin{prop}
\label{trace-estimates-sharp}
Let $\Gamma<\PSL_2\mathbb{C}$ be a discrete, co-compact, torsion-free subgroup, and let $n\in\mathbb{Z}$. Then, for every $y>0$, the sums $T_n^{\cos}(y)$ and $T_n^{\sin}(y)$ defined in \eqref{sharp-sums} satisfy
\begin{align*}
T_n^{\cos}(y)&=\delta_0(n)\int_{-y}^y\dd\varpi_{\Gamma}^{\ast}(u)+\mathrm{O}_{\Gamma}\big(e^{2y/3}+n^2y\big),\\
T_n^{\sin}(y)&=\mathrm{O}_{\Gamma}\big(e^{2y/3}+n^2y\big).
\end{align*}
\end{prop}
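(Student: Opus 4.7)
The plan is to apply Lemma~\ref{trace-estimates-lemma} to the smooth sums $T_n^{\cos}[g_{y,\eta}]$ and $T_n^{\sin}[h_{y,\eta}]$, pass to the sharp cutoff by estimating contributions from the transition zone $\ell(\gamma)\in[y-\eta,y+\eta]$ via Lemma~\ref{geodesics-upper-bound}, and finally choose the parameter $\eta$ to balance the competing errors.

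For the even case, I would write $T_n^{\cos}(y)=T_n^{\cos}[g_{y,\eta}]+T_n^{\cos}[\chi_{[-y,y]}-g_{y,\eta}]$. Since $g_{y,\eta}=\psi_\eta\star\chi_{[-y,y]}$ agrees with $\chi_{[-y,y]}$ outside $\pm[y-\eta,y+\eta]$, is bounded by $1$, and hyperbolic/loxodromic classes have $\ell(\gamma)>0$, the passage error is controlled by $\sum_{y-\eta\leqslant\ell(\gamma)\leqslant y+\eta}\ell(\gamma_0)w(\gamma)$. For $\eta\leqslant 1$ and $y$ above the systole $\eta_0(\Gamma)$, Lemma~\ref{geodesics-upper-bound} together with Remark~\ref{typical-regime} shows this is $\mathrm{O}_{\Gamma}(\eta e^y+\eta^{-2})$. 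The corresponding main-term discrepancy $\int_{\mathbb{R}}(g_{y,\eta}-\chi_{[-y,y]})\,\dd\varpi_{\Gamma}^{\ast}$ is supported in the same transition zones and bounded by $\mathrm{O}_{\Gamma}(\eta e^y)$, using that $\dd\varpi_{\Gamma}^{\ast}/\dd u\asymp_{\Gamma} e^u$ near $+y$ and is bounded near $-y$.

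Combining these with the smooth estimate from Lemma~\ref{trace-estimates-lemma}, the total error is $\mathrm{O}_{\Gamma}(\eta e^y+\eta^{-2}+(1+n^2)(\log^{\ast}(1/\eta)+y))$. Balancing the first two terms forces $\eta=e^{-y/3}$, giving $\eta e^y=\eta^{-2}=e^{2y/3}$ and $\log^{\ast}(1/\eta)\ll y$. The bound collapses to $\mathrm{O}_{\Gamma}(e^{2y/3}+(1+n^2)y)=\mathrm{O}_{\Gamma}(e^{2y/3}+n^2 y)$, as claimed; for $y$ below $\eta_0(\Gamma)$, the sums vanish and the inequality is trivial.

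The odd case proceeds identically: the function $(\chi_{[-y,y]}\cdot\sgn)-h_{y,\eta}$ is bounded and supported in $\pm[y-\eta,y+\eta]$, so the same geometric boundary estimate controls the passage, and the odd trace formula carries no identity or complementary main term to adjust. The main obstacle is the careful bookkeeping of the various boundary contributions and the verification that $\eta=e^{-y/3}$ satisfies the smallness hypothesis of Remark~\ref{typical-regime} for the relevant range of $y$; once these are in hand, the optimization $\eta=e^{-y/3}$ is forced by the symmetry between the geometric length-transition error $\eta e^y$ and the spectral truncation error $\eta^{-2}$ inherited from Lemma~\ref{trace-estimates-lemma}.
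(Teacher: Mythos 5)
Your proof follows the same approach as the paper's: decompose the sharp count into the smooth count plus a boundary term, use Lemma~\ref{trace-estimates-lemma} for the smooth count, majorize the boundary term via Lemma~\ref{geodesics-upper-bound} (cf.~Remark~\ref{typical-regime}), balance the competing errors $\eta e^y$ and $\eta^{-2}$ to choose $\eta = e^{-y/3}$, and treat small $y$ trivially. One small inaccuracy to flag in your odd case: you assert that $(\chi_{[-y,y]}\cdot\sgn) - h_{y,\eta}$ is supported in $\pm[y-\eta,y+\eta]$, but $\chi_{[-y,y]}\cdot\sgn$ is also discontinuous at $0$, so the difference is additionally supported on $[-\eta,\eta]$. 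This is harmless, but the correct fix (which the paper makes explicitly) is to require $2\eta < \eta_0(\Gamma)$, so that no geodesic has length in that near-zero window; the choice $\eta = e^{-y/3}$ satisfies this once $y$ exceeds a $\Gamma$-dependent constant, and the remaining range is vacuous. With that clarification, your argument is correct and matches the paper's.
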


\begin{proof}
We will use Lemmata~\ref{trace-estimates-lemma} and \ref{geodesics-upper-bound}, with a parameter $\eta>0$ to be suitably chosen momentarily. According to the definition \eqref{smooth-cutoff-functions}, we have that the functions $g_{y,\eta}$ and $\chi_{[-y,y]}$ agree outside the set $\pm [y-\eta,y+\eta]$, on which $|g_{y,\eta}-\chi_{[-y,y]}|=\mathrm{O}(1)$. Therefore,
\begin{align*}
\big|T_n^{\cos}(y)-T_n^{\cos}[g_{y,\eta}]\big|
&=\bigg|\sum_{[\gamma]}\ell(\gamma_0)\big(g_{y,\eta}(\ell(\gamma))-\chi_{[-y,y]}(\ell(\gamma))\big)w(\gamma)\cos(n\hol(\gamma))\bigg|\\
&\ll\sum_{y-\eta\leqslant\ell(\gamma)\leqslant y+\eta}\ell(\gamma_0)w(\gamma).
\end{align*}

Therefore, using Lemmata~\ref{trace-estimates-lemma} and \ref{geodesics-upper-bound}, we find that
\begin{align*}
&\bigg|T_n^{\cos}(y)-\delta_0(n)\int_{-y}^y\dd\varpi_{\Gamma}^{\ast}(u)\bigg|\\
&\qquad\leqslant \big|T_n^{\cos}(y)-T_n^{\cos}[g_{y,\eta}]\big|+\bigg|T_n^{\cos}[g_{y,\eta}]-\delta_0(n)\int_{-\infty}^{\infty}g_{y,\eta}(u)\,\dd\varpi_{\Gamma}^{\ast}(u)\bigg|\\
&\qquad\qquad+\delta_0(n)\int_{-\infty}^{\infty}\big(g_{y,\eta}(u)-\chi_{[-y,y]}(u)\big)\,\dd\varpi_{\Gamma}^{\ast}(u)\\
&\qquad\ll\int_{y-2\eta}^{y+2\eta}\dd\varpi_{\Gamma}^{\ast}(u)+\mathrm{O}_{\Gamma}\bigg(\frac1{\eta^2}+(1+n^2)\Big(\log^{\ast}\frac1{\eta}+y\Big)+\eta\bigg).
\end{align*}

The statement of Proposition ~\ref{trace-estimates-sharp} is vacuously true for $y=\mathrm{O}_{\Gamma}(1)$, so we may assume that $y\gg_{\Gamma}1$. As already mentioned in Remark~\ref{typical-regime} and is clear from the definition \eqref{density-star-def}, $\dd\varpi_{\Gamma}^{\ast}(u)/\dd u\asymp_{\Gamma} e^u$ for $u\geqslant 0$ (or $u=\mathrm{O}_{\Gamma}(1)$). We will choose $\eta\ll_{\Gamma}1$; then,
\[ \int_{y-2\eta}^{y+2\eta}\dd\varpi_{\Gamma}^{\ast}(u)\asymp_{\Gamma} \eta e^y. \]
The admissible choice $\eta=e^{-y/3}$ optimizes the error terms and yields Proposition ~\ref{trace-estimates-sharp} for $T_n^{\cos}$.

Similarly, if we additionally require $2\eta$ to be less than the minimal geodesic length $\eta_0(\Gamma)$, we can show that 
\[\big|T_n^{\sin}(y) - T_n^{\sin}[h_{y,\eta}]\big| \ll \sum_{y-\eta\leqslant\ell(\gamma)\leqslant y+\eta}\ell(\gamma_0)w(\gamma), \]
and the proof is identical from here on.
\end{proof}

\subsection{Passage to sharp cutoff in holonomy}
\label{sharp-holonomy-sec}
In this section, we prepare the ground for passage to the sharp count in holonomy. The path to the sharp count is again the geometric counterpart to Proposition ~\ref{Weyl}, but this time in the holonomy aspect.

\begin{lemma}
\label{passage-to-sharp-holonomy}
Let $\Gamma<\PSL_2\mathbb{C}$ be a discrete, co-compact, torsion-free subgroup, and let $y>0$, $\theta_0\in\mathbb{R}$, and $0<\eta' \leqslant 2\pi$. Then,
\begin{equation}
\label{holonomy-upper-bound-formula}
\sum_{\substack{\ell(\gamma)\leqslant y\\ \theta_0-\eta'\leqslant\hol(\gamma)\leqslant\theta_0+\eta'}}\ell(\gamma_0)w(\gamma)\ll \eta' \int_{-y}^y\dd\varpi_{\Gamma}^{\ast}(u)+\mathrm{O}_{\Gamma}\Big(\frac y{\eta'^2}\Big),
\end{equation}
where $w(\gamma)$ is as in \eqref{w-gamma-def} and $\varpi_{\Gamma}^{\ast}$ is as in \eqref{density-star-def}.
\end{lemma}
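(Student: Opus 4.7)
The plan is to mirror Lemma~\ref{geodesics-upper-bound} with the roles of length and holonomy exchanged: I construct a smooth majorant for the holonomy window, pair it with a smooth length cutoff, and Fourier-expand the holonomy majorant to reduce everything to $T_n^{\cos}$ and $T_n^{\sin}$ sums that Lemma~\ref{trace-estimates-lemma} already controls.

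First, I pick the two majorants. For holonomy I take a smooth periodic $F_{\theta_0,\eta'}\geq\chi_{[\theta_0-\eta',\theta_0+\eta']}$ supported (mod $2\pi$) in a $2\eta'$-neighborhood of $\theta_0$, with $\|F_{\theta_0,\eta'}\|_\infty\ll 1$ and Fourier coefficients $|\hat F_{\theta_0,\eta'}(n)|\ll_M \eta'(1+|n|\eta')^{-M}$, obtained by periodizing a fixed smooth bump rescaled by $\eta'$. For length I take $\tilde G:=g_{y+1,1}$ from \eqref{smooth-cutoff-functions}, which satisfies $\chi_{[-y,y]}\leq\tilde G\leq 1$, is supported in $[-y-2,y+2]$, and has precisely the form to which Lemma~\ref{trace-estimates-lemma} directly applies. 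Nonnegativity of $\tilde G F_{\theta_0,\eta'}$ then gives
\[ \sum_{\substack{\ell(\gamma)\leq y\\ |\hol(\gamma)-\theta_0|\leq\eta'}} \ell(\gamma_0)w(\gamma) \leq \sum_{[\gamma]} \ell(\gamma_0)w(\gamma)\,\tilde G(\ell(\gamma))\,F_{\theta_0,\eta'}(\hol(\gamma)). \]

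Next, expanding $F_{\theta_0,\eta'}$ in Fourier series and pairing $\pm n$ via reality writes the right-hand side as $(2\pi)^{-1}\hat F_{\theta_0,\eta'}(0)\, T_0^{\cos}[\tilde G] + \pi^{-1}\sum_{n\geq 1}\mathrm{Re}\bigl(\hat F_{\theta_0,\eta'}(n)\bigl(T_n^{\cos}[\tilde G]+i S_n[\tilde G]\bigr)\bigr)$, where $S_n[\tilde G] := \sum_{[\gamma]}\ell(\gamma_0)w(\gamma)\tilde G(\ell(\gamma))\sin(n\hol(\gamma))$. Lemma~\ref{trace-estimates-lemma} with $(y,\eta)=(y+1,1)$ immediately gives $T_n^{\cos}[\tilde G] = \delta_0(n)\int\tilde G\,\dd\varpi_\Gamma^{\ast} + O_\Gamma((1+n^2)y)$. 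For $S_n[\tilde G]$ I observe that $\tilde G(x)=h_{y+1,1}(x)$ for all $x\geq 1$, and the discrepancy on $[0,1)$ involves only the finitely many short closed geodesics (each with $\ell(\gamma_0)w(\gamma)\ll_\Gamma 1$), so $S_n[\tilde G] = T_n^{\sin}[h_{y+1,1}] + O_\Gamma(1) \ll_\Gamma (1+n^2)y$ by the same lemma.

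Finally, I assemble. Since $\hat F_{\theta_0,\eta'}(0)\asymp\eta'$ and $\int\tilde G\,\dd\varpi_\Gamma^{\ast}\asymp_\Gamma \int_{-y}^y\dd\varpi_\Gamma^{\ast}$, the $n=0$ term contributes $\ll\eta'\int_{-y}^y\dd\varpi_\Gamma^{\ast} + O_\Gamma(\eta' y)$. For $n\geq 1$, splitting the sum at $n\sim 1/\eta'$ and using Schwartz decay (with $M=4$) yields $\sum_{n\geq 1}|\hat F_{\theta_0,\eta'}(n)|\ll 1$ and $\sum_{n\geq 1}n^2|\hat F_{\theta_0,\eta'}(n)|\ll 1/\eta'^2$, so the $n\neq 0$ contribution is $\ll_\Gamma y/\eta'^2$. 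Since $\eta'\leq 2\pi$ forces $\eta' y\ll y/\eta'^2$, the $O_\Gamma(\eta' y)$ is absorbed, giving \eqref{holonomy-upper-bound-formula}. The main obstacle is the sine piece: $\tilde G$ does not exactly equal $h_{y+1,1}$ on $[0,1)$, which is relevant only when $\eta_0(\Gamma)<1$, but this discrepancy is absorbed into $O_\Gamma(1)$ via the finiteness of the short length spectrum.
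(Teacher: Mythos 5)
Your proof is correct and follows the same overall strategy as the paper: majorize the length cutoff with $g_{y+1,1}$, majorize the holonomy window with a periodized smooth bump, Fourier-expand the holonomy majorant, and estimate the resulting trace-formula sums via Lemma~\ref{trace-estimates-lemma} together with Schwartz decay of $\hat\psi$. The one substantive difference is the choice of holonomy majorant. The paper takes the \emph{symmetrized} function
\[
f_{\theta_0,2\eta'}(t)=\sum_{n\in\mathbb{Z}}\Big[\psi\Big(\tfrac{t+2n\pi-\theta_0}{2\eta'}\Big)+\psi\Big(\tfrac{t+2n\pi+\theta_0}{2\eta'}\Big)\Big],
\]
with bumps at both $\theta_0$ and $-\theta_0$. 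Since this is even in $t$, its Fourier expansion has only cosine modes, and the computation closes entirely through $T_m^{\cos}[g_{y+1,1}]$ and the even trace formula. Your one-sided bump at $\theta_0$ is also a valid majorant, but its Fourier series carries both cosine and sine modes, which forces you to estimate the auxiliary sums $S_n[\tilde G]$. Your bridge --- noting that $g_{y+1,1}$ and $h_{y+1,1}$ agree for arguments $\geq 1$, so $S_n[\tilde G]=T_n^{\sin}[h_{y+1,1}]+\mathrm{O}_\Gamma(1)$ with the $\mathrm{O}_\Gamma(1)$ coming from the finitely many classes with $\ell(\gamma)<1$ --- is correct, but it is extra bookkeeping that the paper sidesteps by symmetrizing (at the harmless cost of a constant factor, since one is only after an upper bound). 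Both routes yield the same error term $\eta'\int_{-y}^y\dd\varpi_\Gamma^\ast+\mathrm{O}_\Gamma(y(\eta')^{-2})$; the paper's choice is cleaner, while yours illustrates that the odd trace formula provides a fallback when the majorant is not chosen to be even.
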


\begin{proof}
Since $\Gamma$ is discrete, the claim is vacuously true for $y<\eta_0(\Gamma)$, so we may assume that $y\geqslant\eta_0(\Gamma)$. Consider a majorant function $f_{\theta_0,2\eta'}:\mathbb{R}/2\pi\mathbb{Z}\to\mathbb{R}$ given by
\[ f_{\theta_0,2\eta'}(t)=\sum_{n\in\mathbb{Z}}\Big[\psi\Big(\frac{t+2n\pi-\theta_0}{2\eta'}\Big)+\psi\Big(\frac{t+2n\pi+\theta_0}{2\eta'}\Big)\Big], \]
where $\psi$ is a smooth, non-negative bump function as in \eqref{psi-eta-def}. This is simply a $2\pi\mathbb{Z}$-periodization of the majorant \eqref{majorant-f} used in the proof of Lemma~\ref{geodesics-upper-bound}; it is a smooth, even, non-negative absolutely bounded function supported on $\pm [\theta_0-2\eta',\theta_0+2\eta']+2\pi\mathbb{Z}$ and satisfying $f_{\theta_0,2\eta'}\gg 1$ on $\pm [\theta_0-\eta',\theta_0+\eta']+2\pi\mathbb{Z}$. Since the non-negative sampling function $g_{y+1,1}=\psi_1\star\chi_{[-y-1,y+1]}$ given in \eqref{smooth-cutoff-functions} also satisfies $g_{y+1,1}\gg 1$ for $[-y,y]$, we have
\[ \sum_{\substack{\ell(\gamma)\leqslant y\\ \theta_0-\eta'\leqslant\hol(\gamma)\leqslant\theta_0+\eta'}}\ell(\gamma_0)w(\gamma)\ll\sum_{[\gamma]}\ell(\gamma_0)w(\gamma)g_{y+1,1}(\ell(\gamma))f_{\theta_0,2\eta'}(\hol(\gamma)). \]

Using the cosine Fourier expansion for $f_{\theta_0,2\eta'}$ shows that, after a standard unfolding calculation, the right-hand side equals
\begin{align*}
&\sum_{[\gamma]}\ell(\gamma_0)w(\gamma)g_{y+1,1}(\ell(\gamma))\cdot\frac{4\eta'}{2\pi}\sum_{m\in\mathbb{Z}}\cos(m\theta_0)\hat{\psi}\big(\frac{\eta' m}{\pi}\big)\cos(m\hol(\gamma))\\
&\qquad=\frac{2\eta'}{\pi}\sum_{m\in\mathbb{Z}}\cos(m\theta_0)\hat{\psi}\big(\frac{\eta' m}{\pi}\big)T_m^{\cos}[g_{y+1,1}],
\end{align*}
using the definition \eqref{smooth-sums}. Applying Lemma~\ref{trace-estimates-lemma} and the Schwartz estimate $|\hat{\psi}(\eta' t/\pi)|\ll_k 1/(1+\eta'|t|)^k$ with (say) $k=4$, this sum evaluates as
\begin{align*}
&\frac{2\eta'}{\pi}\hat{\psi}(0)\int_{-\infty}^{\infty}g_{y+1,1}(u)\,\dd\varpi_{\Gamma}^{\ast}(u)
+\mathrm{O}_{\Gamma}\bigg(\eta'\sum_{m\in\mathbb{Z}}\Big|\hat{\psi}\Big(\frac{\eta' m}{
\pi}\Big)\Big|(1+m^2)y\bigg)\\
&\qquad\ll\eta'\int_{-y-2}^{y+2}\,\dd\varpi_{\Gamma}^{\ast}(u)+\mathrm{O}_{\Gamma}\bigg(
\eta' y\bigg[1+\sum_{1\leqslant m\leqslant 1/\eta'}m^2+\sum_{m>1/\eta'}\frac{m^2}{(\eta' m)^4}\bigg]\bigg)\\
&\qquad\ll\eta'\int_{-y}^y\,\dd\varpi_{\Gamma}^{\ast}(u)+\mathrm{O}_{\Gamma}\Big(\frac{y}{\eta'^2}\Big),
\end{align*}
which completes the proof.
\end{proof}

\begin{remark}
\label{symmetry-remark}
The reader will notice that this device for passage to sharp count in holonomy requires the use of Lemma~\ref{trace-estimates-lemma} (and thus Theorem~\ref{EvenTrace}) with large $n$. This mirrors the fact that the passage to sharp count in length requires the use of Lemma~\ref{trace-estimates-lemma} with small $\eta>0$, which in turn relies on using Theorem~\ref{EvenTrace} with large spectral parameter $\nu$. In both cases, the proof boils down to estimates on the density of the automorphic spectrum $\pi_{\nu,n}$ with spectral parameters increasing in different directions; these are in turn provided by Proposition ~\ref{Weyl} which works over any ball of spectral parameters of radius $\mathrm{O}(1)$. This structural parallel underlies the agreement between our results in the length and holonomy aspects.
\end{remark}

\section{Primitivity and weights}\label{PreliminariesSection}

For $y>0$ and $n\in\mathbb{Z}$, consider the sums
\begin{equation}
\label{SnSnP}
S_n(y)= \sum_{\ell(\gamma) \leqslant y} \ell(\gamma) e^{-\ell(\gamma)+ i n \hol \gamma}, \qquad
S_n^{P}(y)= \sump_{\ell(\gamma) \leqslant y} \ell(\gamma) e^{-\ell(\gamma)+i n \hol \gamma},
\end{equation}
where the summation is over the non-trivial conjugacy classes $[\gamma]$ of $\Gamma$, and (here and throughout) the superscript $^P$ indicates that summation is restricted to primitive non-trivial conjugacy classes. The sum $S_n^P(y)$ is of primary interest for counting primitive geodesics with control on holonomy.

On the other hand, an application of trace formula as in Section~\ref{trace-formula-sec} (see \eqref{sharp-sums} and Proposition ~\ref{trace-estimates-sharp}) naturally gives a handle on sums such as $T_n(y)$ and its cousin $T_n^P(y)$ defined by
\begin{equation}
\label{TnTnP}
T_n(y) = \sum_{\ell(\gamma) \leqslant y} \ell(\gamma_0) w(\gamma) e^{i n \hol(\gamma)}, \qquad 
T_n^P(y) = \sump_{\ell(\gamma) \leqslant y} \ell(\gamma) w(\gamma) e^{i n \hol(\gamma)},
\end{equation}
where $\gamma_0$ is the primitive hyperbolic or loxodromic element that generates $\gamma$ and the weight $w(\gamma)\asymp_{\Gamma}e^{-\ell(\gamma)}$ is as in \eqref{w-gamma-def}. The main result of this section, Lemma~\ref{weights-primitivity} shows that all four sums defined in \eqref{SnSnP} and \eqref{TnTnP} agree up to a very small error term. In Lemma~\ref{weights-primitivity-smooth}, we record a similar result for sums with a more general length cutoff.

\begin{lemma}
\label{weights-primitivity}
Let $\Gamma<\PSL_2\mathbb{C}$ be a discrete, co-compact, torsion-free subgroup, and let $y>0$, $n\in\mathbb{Z}$. Then the sums defined in \eqref{SnSnP} and \eqref{TnTnP} satisfy:
\begin{alignat}{3}
\label{G=GP} S_n^P(y) &= S_n(y) &&+ \mathrm{O}_\Gamma(y),\\
\label{HP=GP} S_n^P(y) &= T_n^P(y) &&+ \mathrm{O}_\Gamma(y),\\
\label{Hn=HP} T_n^P(y) &= T_n(y) &&+ \mathrm{O}_\Gamma(y),
\end{alignat}
and, consequently,
\begin{equation}
\label{Hn=Gn} S_n^P(y)=T_n(y)+\mathrm{O}_{\Gamma}(y).
\end{equation}
\end{lemma}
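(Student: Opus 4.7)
The plan is to isolate the three sources of discrepancy---primitivity in $S$, primitivity in $T$, and the difference between the weights $e^{-\ell(\gamma)}$ and $w(\gamma)$---and reduce each to a single auxiliary density bound on the primitive length spectrum.  Note that \eqref{Hn=Gn} follows from \eqref{G=GP}--\eqref{Hn=HP} by the triangle inequality, so I focus on those three.

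For \eqref{HP=GP}, I would expand the weight as
\[ w(\gamma) = \frac{e^{-\ell(\gamma)}}{1 - 2 e^{-\ell(\gamma)} \cos\hol(\gamma) + e^{-2\ell(\gamma)}} = e^{-\ell(\gamma)} + \mathrm{O}_\Gamma\bigl(e^{-2\ell(\gamma)}\bigr), \]
valid for any non-trivial $[\gamma]$ since $\ell(\gamma)\geqslant \eta_0(\Gamma) > 0$, which gives
\[ \bigl| S_n^P(y) - T_n^P(y) \bigr| \ll_\Gamma \sump_{\ell_0 \leqslant y} \ell_0 e^{-2 \ell_0}. \]
For \eqref{G=GP} and \eqref{Hn=HP}, the difference in each case is the imprimitive contribution indexed by $\gamma = \gamma_0^k$ with $k \geqslant 2$ and $\gamma_0$ primitive.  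Using $\ell(\gamma_0^k) = k\ell_0$, the relation $w(\gamma_0^k) \asymp_\Gamma e^{-k\ell_0}$, and summing the resulting convergent geometric series in $k$ (which works because $\ell_0 \geqslant \eta_0(\Gamma)$ is bounded below, so the series is dominated by its $k=2$ term), each such difference is bounded in absolute value by $\sump_{\ell_0 \leqslant y/2} \ell_0 e^{-2\ell_0}$.

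Thus all three estimates follow once one establishes the auxiliary density bound
\[ \sump_{\ell_0 \leqslant A} \ell_0 e^{-2 \ell_0} \ll_\Gamma A \qquad (A \geqslant 1), \]
which is the genuine substance of the proof.  I would derive it from Lemma~\ref{geodesics-upper-bound}:  applied with $y = a + \tfrac12$ and $\eta = \tfrac12$, the right-hand side of \eqref{geodesics-upper-bound-formula} is $\mathrm{O}_\Gamma(e^a)$, and restricting to primitive classes (using $w(\gamma_0) \asymp_\Gamma e^{-\ell_0}$) yields $\sump_{a\leqslant \ell_0\leqslant a+1} \ell_0 e^{-\ell_0} \ll_\Gamma e^a$.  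Partitioning $[0,A]$ into unit intervals and using $\ell_0 e^{-2\ell_0} \leqslant e^{-a}\cdot\ell_0 e^{-\ell_0}$ on $[a, a+1]$ gives
\[ \sump_{\ell_0 \leqslant A} \ell_0 e^{-2 \ell_0} \leqslant \sum_{0 \leqslant a \leqslant A} e^{-a} \sump_{a \leqslant \ell_0 \leqslant a+1} \ell_0 e^{-\ell_0} \ll_\Gamma \sum_{0 \leqslant a \leqslant A} 1 \ll A, \]
as claimed.  The only substantive input is Lemma~\ref{geodesics-upper-bound} (which itself rests on the trace formula via Proposition~\ref{Weyl}); no further obstacle appears, since the $\mathrm{O}_\Gamma(y)$ errors in \eqref{G=GP}--\eqref{Hn=Gn} arise cleanly as a sum of $\mathrm{O}(A)$ ones.
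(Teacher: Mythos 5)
Your proof is correct, and it reaches the same conclusion by a somewhat different organization than the paper. The paper isolates a single auxiliary quantity $J_k(y):=\sum^P_{\ell(\gamma)\leqslant y}\ell(\gamma)e^{-k\ell(\gamma)}$ (for $k\geqslant 2$), bounds $J^{\ast}(y):=\sum^P_{\ell(\gamma)\leqslant y}\ell(\gamma)e^{-\ell(\gamma)}\ll_{\Gamma}e^y$ via Proposition~\ref{trace-estimates-sharp} with $n=0$, and then uses a Stieltjes integration-by-parts against $J^{\ast}$ to get $J_k(y)\ll_{\Gamma}\delta_2(k)\,y+e^{-k\eta_0(\Gamma)}$; the three estimates \eqref{G=GP}--\eqref{Hn=HP} are then read off by reducing each discrepancy to sums of the form $\sum_k kJ_k(y/k)$, $J_2(y)$, or $\sum_k J_k(y/k)$. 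You instead collapse all three to the single bound $\sum^P_{\ell_0\leqslant A}\ell_0 e^{-2\ell_0}\ll_{\Gamma}A$ (which is exactly $J_2(A)$ up to the constant term), and you prove it directly from the short-window estimate of Lemma~\ref{geodesics-upper-bound}, applied on unit intervals $[a,a+1]$ and summed with the trivial weight $e^{-a}$. Both routes rest on the same trace-formula input through Proposition~\ref{Weyl}; yours bypasses the explicit partial summation against $J^{\ast}$ and avoids re-using Proposition~\ref{trace-estimates-sharp}, at the price of hiding the $k$-dependence inside a geometric-series bound that only records the leading $k=2$ contribution. The gains are modest---a more visibly unified reduction---and the losses are also modest---you do not get the sharper information that the $k\geqslant 3$ contributions are $\mathrm{O}_{\Gamma}(1)$ rather than $\mathrm{O}_{\Gamma}(y)$, though this is not needed here. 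One small stylistic note: you should state explicitly (as the paper does) that all sums vanish and the claim is vacuous for $y<\eta_0(\Gamma)$, so that the restriction $A\geqslant 1$ in your auxiliary bound, or more properly $A\gg_{\Gamma}1$, is harmless in the application.
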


\begin{remark}
The bound \eqref{Hn=Gn}, which follows from \eqref{HP=GP} and \eqref{Hn=HP}, can be thought of as a statement about removal of unwieldy weights $w(\gamma)$ and imprimitive classes from $T_n(y)$. Since our bounds on $T_n(y)$ in Proposition ~\ref{trace-estimates-sharp} are exponential in $y$ (in particular $T_0(y)\asymp_{\Gamma}e^y$), Lemma~\ref{weights-primitivity} shows that the error terms introduced by these maneuvers are very small in comparison, and also that (cf.~\eqref{G=GP}) all our statements hold if extended to include imprimitive  geodesics. The sources of leading error terms in all our principal results are elsewhere, notably in the passage from smooth to sharp cutoff.

Lemma~\ref{weights-primitivity} is similar, both in spirit and quantitative strength, to the relations between 
Chebyshev's functions in the proof of the Prime Number Theorem. The estimate $S_n^P(y)=S_n(y)+\mathrm{O}_{\Gamma}(y)$ should be compared to the classical estimate $|\psi(x)-\theta(x)|\ll x^{1/2}$ for $\psi(x),\theta(x)\sim x$ as $x\to\infty$, bearing in mind the weights $\ell(\gamma)e^{-\ell(\gamma)}$ in \eqref{SnSnP}. If those weights were removed by summation by parts as in the proof of Proposition~\ref{holonomy-sums-prop}, the corresponding unweighted sums (say $\tilde{S}_n(y)$ and $\tilde{S}_n^P(y)=K_n(y)$ in \eqref{holonomy-character-sums}) would satisfy $|\tilde{S}_n(y)-\tilde{S}_n^P(y)|\ll_{\Gamma,n}e^y$ as compared to the main term $\tilde{S}_0^P(y)\sim_{\Gamma} e^{2y}/2y$.

Essential ideas for Lemma~\ref{weights-primitivity} are due to Sarnak--Wakayama~\cite[Lemmata~7.1, 7.2]{SarWa}. As for us, a key element of their proof of the equidistribution result~\eqref{SarnakWakayamaResult} is to approximate $S_n^P(y)$, which appears in the spectral (Fourier) decomposition of the sum over primitive conjugacy classes, by $T_n(y)$, which can be approximated by the hyperbolic/loxodromic term in the non-spherical trace formula with an appropriate choice of test function. For a general rank one locally symmetric space of finite volume and negative curvature, Sarnak and Wakayama show that all four sums \eqref{SnSnP}--\eqref{TnTnP} have the same asymptotic growth, with a sub-exponential error term. Lemma~\ref{weights-primitivity} explicates and sharpens this error term in the context of compact hyperbolic 3-manifolds.
\end{remark}

\begin{proof}
Note that since $\Gamma$ is discrete, there is a minimum geodesic length $\eta_0(\Gamma)$. When $y <\eta_0(\Gamma)$, all of the sums \eqref{SnSnP}--\eqref{TnTnP} vanish, and Lemma~\ref{weights-primitivity} holds vacuously; thus we may assume that $y\geqslant\eta_0(\Gamma)$.

We will require, for $k\geqslant 2$ and $y>0$, an estimate on the sum $J_k(y)$ defined by
\[ J_k(y) := \sump_{\ell(\gamma) \leqslant y} \ell(\gamma) e^{-k \ell(\gamma)}. \] 
Since $w(\gamma)\asymp_{\Gamma} e^{-\ell(\gamma)}$, Proposition ~\ref{trace-estimates-sharp} with $n=0$ shows that
\[ J^{\ast}(y):=\sump_{\ell(\gamma) \leqslant y} \ell(\gamma) e^{-\ell(\gamma)}\ll_{\Gamma} e^y. \]
In fact, a more precise bound $J^{\ast}(y)\ll e^y+\mathrm{O}_{\Gamma}\big(e^{\frac23y}+e^{\nu_1y}\big)$, with $\nu_1$ as in \eqref{PGT} and to be omitted if $\Gamma$ admits no complementary spectrum, follows from Proposition ~\ref{trace-estimates-sharp} with $n=0$ or (essentially) from the Prime Geodesic Theorem~\eqref{PGT}, but we will not need this. Using integration by parts,
\begin{equation}
\label{Jky}
\begin{aligned}
J_k(y)&=\int_{\eta_0(\Gamma)^{-}}^{y+}e^{-(k-1)t}\,\dd J^{\ast}(t)
=e^{-(k-1)t}J^{\ast}(t)\bigg|_{\eta_0(\Gamma)^{-}}^{y+}\!\!+(k-1)\int_{\eta_0(\Gamma)}^yJ^{\ast}(t)e^{-(k-1)t}\,\dd t\\
&\ll_{\Gamma}\delta_{2}(k)y+e^{-k\eta_0(\Gamma)},
\end{aligned}
\end{equation}
where $\delta_2$ is as in \S\ref{notation-subsec}.
With the estimate \eqref{Jky} at our disposal, we proceed to prove \eqref{G=GP}--\eqref{Hn=HP}.

\textbf{Proof of \eqref{G=GP}:} Suppose $\tilde \gamma \in \Gamma$ is not primitive; then $\tilde \gamma = \gamma^k$ for some primitive $\gamma \in \Gamma$ and $k\geqslant 2$. The length and holonomy are $\ell(\gamma^k) = k \ell(\gamma)$ and $\hol(\gamma^k) = k \hol(\gamma)$, respectively. Therefore,
	\begin{align*}
	|S_n(y) - S_n^P(y)| &= \Big|\sum_{k \geqslant 2}\,\,\, \sump_{\ell(\gamma) \leqslant y/k}  k \ell(\gamma) e^{-k\ell(\gamma) + i kn \hol(\gamma)}\Big|\\
	&\leqslant \sum_{k \geqslant 2} k \sump_{\ell(\gamma) \leqslant y/k} \ell(\gamma) e^{-k \ell(\gamma)}
	=\sum_{k \geqslant 2} k J_k(y/k),
	\end{align*}
where in fact the sum truncates at $k\leqslant y/\eta_0(\Gamma)$. Using the estimate \eqref{Jky}, we find that
\[ |S_n(y) - S_n^P(y)|\leqslant\sum_{k \geqslant 2} k J_k(y/k)  = \mathrm{O}_{\Gamma}(y), \]
as required.

\textbf{Proof of \eqref{HP=GP}:} First, we have that
\[ T_n^P(y) - S_n^P(y) = \sump_{\ell(\gamma) \leqslant y} \ell(\gamma) (w(\gamma)- e^{-\ell(\gamma)}) e^{i n \hol(\gamma)}. \]
To simplify, observe that 
\begin{align*}
w(\gamma)&=|1- e^{\mathbb{C}\ell(\gamma)}|^{-1}|1- e^{-\mathbb{C}\ell(\gamma)}|^{-1} = e^{- \ell(\gamma)} |1 - e^{-\mathbb{C} \ell(\gamma)}|^{-2}\\
&=e^{-\ell(\gamma)}\big(1+\mathrm{O}_{\Gamma}(e^{-\ell(\gamma)})\big).
\end{align*}
Therefore, using again the estimate \eqref{Jky},
\[ |T_n^P(y) - S_n^P(y)| \ll_{\Gamma} \sump_{\ell(\gamma) \leqslant y} \ell(\gamma) e^{-2\ell(\gamma)}=J_2(y)\ll_{\Gamma} y. \]

\textbf{Proof of \eqref{Hn=HP}:} As in the proof of \eqref{G=GP}, we have
\[ T_n(y) - T_n^P(y)=\sum_{k\geqslant 2}\,\,\,\sump_{\ell(\gamma) \leqslant y/k} 
\ell(\gamma) w(\gamma^k) e^{ikn\hol(\gamma)}. \]
Since $w(\gamma^k)=e^{-k\ell(\gamma)}(1+\mathrm{O}_{\Gamma}(e^{-k\ell(\gamma)}))$, we thus have that
\[ T_n(y) - T_n^P(y)\ll_{\Gamma}\sum_{k \geqslant 2} 
\,\,\,
\sump_{\ell(\gamma) \leqslant y/k} \ell(\gamma) e^{-k \ell(\gamma)}
	=\sum_{k \geqslant 2} 
	J_k(y/k)=\mathrm{O}_{\Gamma}(y), \]
as in the proof of \eqref{G=GP}.
\end{proof}

For future reference, we also include a version of Lemma~\ref{weights-primitivity} with more arbitrary (such as smooth) cutoffs. This presents no serious distinction, as the proof of Lemma~\ref{weights-primitivity} uses the cutoff only to control the set of geodesics entering the estimates, followed by term-wise estimates. For $n\in\mathbb{Z}$ and a bounded, compactly supported function $g:\mathbb{R}\to\mathbb{R}$, define
\begin{equation}
\label{SnTn-smooth}
\begin{aligned}
S_n^P[g]&=\sump_{[\gamma]}\ell(\gamma)g(\ell(\gamma))e^{-\ell(\gamma)+i n \hol \gamma},\\
T_n[g]&=\sum_{[\gamma]}\ell(\gamma_0)g(\ell(\gamma))w(\gamma)e^{i n \hol \gamma},
\end{aligned}
\end{equation}
with notation as in \eqref{SnSnP} and \eqref{TnTnP}. The following lemma shows that these two sums are also comparatively very close.

\begin{lemma}
\label{weights-primitivity-smooth}
Let $\Gamma<\PSL_2\mathbb{C}$ be a discrete, co-compact, torsion-free subgroup, and let $g:\mathbb{R}\to\mathbb{R}$ be a bounded function supported in $[-y,y]$. Then the sums $S_n^P[g]$ and $T_n[g]$ defined in \eqref{SnTn-smooth} satisfy
\[ S_n^P[g]=T_n[g]+\mathrm{O}_{\Gamma}(\|g\|_{\infty}y). \]
\end{lemma}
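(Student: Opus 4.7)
The strategy is to mimic the proof of Lemma~\ref{weights-primitivity} almost verbatim, noting that the only role of the sharp cutoff $\chi_{[-y,y]}(\ell(\gamma))$ in that proof was to restrict the range of the summation entering the term-wise estimates. Since $g$ is bounded by $\|g\|_{\infty}$ and supported in $[-y,y]$, replacing the cutoff by $g$ and pulling $\|g\|_{\infty}$ out of the estimates in absolute value should cost exactly one factor of $\|g\|_{\infty}$ and nothing else.

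First, I would split $T_n[g]$ into contributions from primitive conjugacy classes and their iterates $\gamma=\gamma_0^k$ for $k\geqslant 2$, writing
\[ T_n[g] = T_n^P[g] + \sum_{k\geqslant 2}\ \sump_{[\gamma]} \ell(\gamma)\, g(k\ell(\gamma))\, w(\gamma^k)\, e^{ikn\hol(\gamma)}, \]
where $T_n^P[g]=\sump_{[\gamma]} \ell(\gamma)\,g(\ell(\gamma))\,w(\gamma)\,e^{in\hol(\gamma)}$. For the non-primitive part, the support condition on $g$ forces $\ell(\gamma)\leqslant y/k$, and applying $w(\gamma^k)\ll_{\Gamma} e^{-k\ell(\gamma)}$ together with the estimate \eqref{Jky} gives a bound
\[ \|g\|_{\infty}\sum_{k\geqslant 2} J_k(y/k)\ll_{\Gamma} \|g\|_{\infty}\Big(\tfrac{y}{2}+\sum_{k\geqslant 2}e^{-k\eta_0(\Gamma)}\Big)\ll_{\Gamma} \|g\|_{\infty}\, y, \]
exactly as in the proof of \eqref{Hn=HP}.

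For the remaining comparison between $T_n^P[g]$ and $S_n^P[g]$, I use the expansion $w(\gamma)=e^{-\ell(\gamma)}\bigl(1+\mathrm{O}_{\Gamma}(e^{-\ell(\gamma)})\bigr)$ term-wise on primitive classes to obtain
\[ \bigl|T_n^P[g]-S_n^P[g]\bigr|\ll_{\Gamma} \|g\|_{\infty}\sump_{\ell(\gamma)\leqslant y} \ell(\gamma)\, e^{-2\ell(\gamma)} = \|g\|_{\infty}\, J_2(y)\ll_{\Gamma} \|g\|_{\infty}\, y, \]
once again by \eqref{Jky}. Combining the two bounds yields the claim $S_n^P[g] = T_n[g] + \mathrm{O}_{\Gamma}(\|g\|_{\infty} y)$. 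There is no real obstacle to overcome: both steps in Lemma~\ref{weights-primitivity} are already pure term-wise bounds after the cutoff is imposed, and inserting $|g|\leqslant \|g\|_{\infty}\chi_{[-y,y]}$ passes this uniform weight through each estimate essentially for free.
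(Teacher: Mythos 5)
Your proof is correct and takes essentially the same approach as the paper, which simply observes that $|g|\leqslant\|g\|_{\infty}\chi_{[-y,y]}$ lets one pass the uniform weight through the term-wise estimates of Lemma~\ref{weights-primitivity} unchanged. You have in fact written out in full detail the ``\emph{mutatis mutandis}'' bootstrapping step that the paper only sketches.
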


\begin{proof}
Defining $S_n[g]$ and $T_n^P[g]$ in the obvious way, we find that
\begin{align*}
\big|T_n[g]-T_n^P[g]\big|&=\bigg|\Big(\sum-\sump\Big)_{[\gamma]}\ell(\gamma_0)g(\ell(\gamma))w(\gamma)e^{in\hol(\gamma)}\bigg|\\
&\leqslant\|g\|_{\infty}\Big(\sum-\sump\Big)_{\ell(\gamma)\leqslant y}\ell(\gamma_0)w(\gamma).
\end{align*}
From here, an identical proof to the proof of \eqref{Hn=HP} in Lemma~\ref{weights-primitivity} shows that
\[ T_n[g]-T_n^P[g]\ll_{\Gamma} \|g\|_{\infty}y. \]
The proofs that all four sums $S_n[g]$, $S_n^P[g]$, $T_n^P[g]$ and $T_n[g]$ are within $\mathrm{O}_{\Gamma}(\|g\|_{\infty}y)$ of each other follow in the same way by bootstrapping the proof of Lemma~\ref{weights-primitivity}, \emph{mutatis mutandis}.
\end{proof}

\section{Holonomy character sums}\label{character sums}

In this section, we prove estimates on the ``holonomy character sums''
\begin{equation}
\label{holonomy-character-sums}
K_n[g_{y,\eta}]=\sump_{[\gamma]}g_{y,\eta}(\ell(\gamma))e^{in\hol(\gamma)},\qquad K_n(y)=\sump_{\ell(\gamma)\leqslant y} e^{in\hol(\gamma)},
\end{equation}
where $y,\eta>0$, the cutoff function $g_{y,\eta}$ is as in \eqref{smooth-cutoff-functions}, and the summation is over all non-trivial primitive hyperbolic and loxodromic conjugacy classes $[\gamma]$ of $\Gamma$.

The sums $K_n[g_{y,\eta}]$ and $K_n(y)$ capture the primitive length spectrum of $\Gamma$ with a smooth and sharp cutoff up to around $y>0$, respectively, weighted by characters $\chi_{0,n}$ (see \eqref{characters-T}) on the holonomy group $T\cap \PSU_2
\simeq\mathbb{R}/2\pi\mathbb{Z}$. They play an analogous role to that of Dirichlet character sums in the context of the Prime Number Theorem in Arithmetic Progressions; in particular, asymptotics for the sum $K_0(y)$ recover the Prime Geodesic Theorem, while for $n\neq 0$ the sums $K_n(y)$ feature substantial cancellation, as shown in the following proposition.

\begin{prop}[Holonomy character sums]
\label{holonomy-sums-prop}
Let $\Gamma<\PSL_2\mathbb{C}$ be a discrete, co-compact, torsion-free subgroup, and let $y>0$, $n\in\mathbb{Z}$, and $0<\eta\leqslant\eta_0$. Then the sums $K_n[g_{y,\eta}]$ and $K_n(y)$ defined in \eqref{holonomy-character-sums} satisfy
\begin{equation}
\label{hol-char-sum-result}
\begin{aligned}
K_n[g_{y,\eta}] &=\delta_0(n)\int_2^{\infty}g_{y,\eta}(u)\,\dd\varpi_{\Gamma}(u)+\mathrm{O}_{\Gamma,\eta_0}\bigg(e^{ y} \Big(\frac1{y\eta^2}+n^2+1 \Big)\bigg),\\
K_n(y) &= \delta_0(n)\int_2^y\dd\varpi_{\Gamma}(u)+\mathrm{O}_{\Gamma}\Big(\frac{e^{5y/3}}y + n^2 e^y\Big),
\end{aligned}
\end{equation}
where $\delta_0$ and $\varpi_{\Gamma}$ are as in \S\ref{notation-subsec} and \eqref{ei-varpi}.
\end{prop}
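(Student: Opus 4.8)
The plan is to relate the holonomy character sums $K_n$ in \eqref{holonomy-character-sums} to the weighted sums $S_n^P$ of \eqref{SnSnP} via partial summation, then invoke the already-established asymptotics for $T_n$ (Proposition~\ref{trace-estimates-sharp}) together with the weight-and-primitivity comparison of Lemma~\ref{weights-primitivity} (and its smooth analogue Lemma~\ref{weights-primitivity-smooth}). The point is that $K_n[g_{y,\eta}]$ differs from $S_n^P[g_{y,\eta}]$ only by the factor $\ell(\gamma)e^{-\ell(\gamma)}$ attached to each term, which is smooth and monotone on the support, so Abel summation converts a bound on the partial sums $S_n^P(t)$ into a bound on $K_n(t)$, gaining (rather than losing) a factor roughly $e^{-y}/y$ since $S_n^P$ is weighted by $\ell e^{-\ell}$.

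Concretely, for the sharp cutoff I would first combine \eqref{Hn=Gn} with Proposition~\ref{trace-estimates-sharp} to record
\[
S_n^P(t)=\delta_0(n)\int_{-t}^{t}\dd\varpi_{\Gamma}^{\ast}(u)+\mathrm{O}_{\Gamma}\big(e^{2t/3}+n^2 t\big),
\]
valid for all $t>0$ (for $n=0$ the main term is $\asymp_\Gamma e^t$). Then I would write
\[
K_n(y)=\sump_{\ell(\gamma)\leqslant y}e^{in\hol\gamma}
=\int_{\eta_0(\Gamma)^-}^{y+}\frac{e^{t}}{t}\,\dd S_n^P(t),
\]
and integrate by parts: the boundary term at $t=y$ is $S_n^P(y)e^{y}/y$, and the remaining integral $\int \big(e^t/t\big)' S_n^P(t)\,\dd t$ is controlled using $\big(e^t/t\big)'=e^t/t+\mathrm{O}(e^t/t^2)$ and the above bound on $S_n^P(t)$. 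Substituting the main term $\delta_0(n)\int_{-t}^t\dd\varpi_{\Gamma}^{\ast}(u)$ and simplifying $e^t/t$ against $\dd\varpi_{\Gamma}^{\ast}$ recovers (after a short computation using \eqref{ei-varpi} and \eqref{density-star-def}, noting $\dd\varpi_{\Gamma}(t)=(e^t/t)\,\dd\varpi_\Gamma^\ast(t)\cdot(\text{const})$ up to the lower endpoint bookkeeping, with the constants $\nu_j\leftrightarrow$ complementary $\nu$ matching) the main term $\delta_0(n)\int_2^y\dd\varpi_{\Gamma}(u)$; the error terms integrate to $\mathrm{O}_\Gamma\big(e^{5y/3}/y+n^2 e^y\big)$, since $\int_{\eta_0}^{y}(e^t/t)(e^{2t/3}+n^2 t)\,\dd t\ll e^{5y/3}/y+n^2 e^y$. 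For the smooth version $K_n[g_{y,\eta}]$ I would proceed in exactly the same way, starting from the smooth-cutoff bound obtained by combining Lemma~\ref{weights-primitivity-smooth} with Lemma~\ref{trace-estimates-lemma} to estimate $S_n^P[g_{y,\eta}]$, and writing $K_n[g_{y,\eta}]=\int (e^t/t)\,\dd S_n^P(t)$ with $S_n^P(t)$ replaced by the relevant smoothly-truncated partial sum; here the error picks up the $1/(y\eta^2)$ from the $1/\eta^2$ in Lemma~\ref{trace-estimates-lemma} after the gain of $e^{-y}/y$, plus the $n^2+1$ from the $(1+n^2)(\log^\ast(1/\eta)+y)$ term, and the restriction $\eta\leqslant\eta_0$ is what allows the $\log^\ast(1/\eta)$ to be absorbed.

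The main obstacle I anticipate is purely bookkeeping rather than conceptual: carefully matching the main terms, i.e. checking that partial summation of $\delta_0(n)\int_{-t}^t\dd\varpi_{\Gamma}^{\ast}(u)$ against $e^t/t$ reproduces exactly $\delta_0(n)\int_2^y\dd\varpi_{\Gamma}(u)$ with the correct constant and the correct (harmless, $\mathrm{O}_\Gamma(1)$-absorbed) treatment of the lower limit $\eta_0(\Gamma)$ versus $2$, given the two different normalizations of the density measures in \eqref{ei-varpi} and \eqref{density-star-def}. A secondary point requiring care is that in the smooth case $g_{y,\eta}$ is supported on $[-y-\eta,y+\eta]$ and is not monotone, so the partial-summation variable should be set up against the monotone profile $\ell e^{-\ell}$ with $g_{y,\eta}(\ell)$ carried inside the measure $\dd S_n^P$; equivalently one applies Lemma~\ref{weights-primitivity-smooth} with $g$ replaced by $g_{y,\eta}(t)\cdot t e^{-t}$-type weights and tracks that $\|g_{y,\eta}\|_\infty\ll 1$ so the error term $\mathrm{O}_\Gamma(\|g\|_\infty y)$ stays of size $\mathrm{O}_\Gamma(y)$, which is negligible after multiplication by $e^y/y$.
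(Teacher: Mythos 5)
Your treatment of the sharp count $K_n(y)$ coincides with the paper's own proof: combining Proposition~\ref{trace-estimates-sharp} with Lemma~\ref{weights-primitivity} to get $S_n^P(t)=\delta_0(n)\int_{-t}^{t}\dd\varpi_{\Gamma}^{\ast}(u)+\mathrm{O}_{\Gamma}(e^{2t/3}+n^2t)$, writing $K_n(y)=\int_{\eta_0(\Gamma)-}^{y+}(e^t/t)\,\dd S_n^P(t)$, and integrating by parts; the $\varpi_{\Gamma}$ versus $\varpi_{\Gamma}^{\ast}$ bookkeeping is indeed harmless, exactly as in \eqref{Kny-rewritten}--\eqref{Kny-part2}. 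The gap is in the smooth asymptotic for $K_n[g_{y,\eta}]$, which you claim follows ``in exactly the same way'' from ``the relevant smoothly-truncated partial sum''. This is precisely the step the paper singles out as the difficulty, and neither reading of your sketch yields the claimed error $\mathrm{O}_{\Gamma,\eta_0}\bigl(e^y(1/(y\eta^2)+n^2+1)\bigr)$, which for fixed $\eta=\eta_0$ is far stronger than the sharp bound. (i) If the intermediate truncations are sharp --- i.e.\ you Abel-sum against $t\mapsto\sump_{\ell(\gamma)\leqslant t}\ell(\gamma)g_{y,\eta}(\ell(\gamma))e^{-\ell(\gamma)+in\hol\gamma}$, which equals $S_n^P(t)$ for all $t\leqslant y-\eta$ --- then each intermediate value carries only the sharp-cutoff error $e^{2t/3}+n^2t$ of \eqref{snP-estimate}, and partial summation against $e^t/t$ returns $\mathrm{O}_{\Gamma}(e^{5y/3}/y+n^2e^y)$: you merely reprove the second line of \eqref{hol-char-sum-result}, not the first. (ii) If instead you use the smooth family $t\mapsto S_n^P[g_{t,\eta}]$, the reconstruction fails: in $\int_{\eta_0}^{y}(e^t/t)\tfrac{\dd}{\dd t}S_n^P[g_{t,\eta}]\,\dd t$ each class $[\gamma]$ receives the weight $\ell(\gamma)e^{-\ell(\gamma)}\int(e^t/t)\psi_{\eta}(\ell(\gamma)-t)\,\dd t$, which differs from the target weight $g_{y,\eta}(\ell(\gamma))$ by relative errors of order $\eta$ (the mollifier does not commute with the unbounded factor $e^t/t$); summed over the $\asymp e^{2y}/y$ classes involved, this produces an error as large as $\eta e^{2y}/y$, which swamps the target and cannot be controlled without the very estimate being proved. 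Saying that the error $\mathrm{O}_{\Gamma}(\|g\|_{\infty}y)$ of Lemma~\ref{weights-primitivity-smooth} ``is negligible after multiplication by $e^y/y$'' presumes one may simply multiply the top-end bound by $e^y/y$; Abel summation does not work that way --- it needs uniform control of a full one-parameter family of truncations, and no family with smooth-quality errors that exactly reconstructs $K_n[g_{y,\eta}]$ is available in your setup.

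This is exactly why the paper introduces the twisted mollifiers of \eqref{def-modified-g}: with $\psi_{\eta}^{\lambda}(x)=\psi_{\eta}(x)e^{\lambda x}$ and $g_{t,\eta}^{\lambda}=\chi_{[-\eta,t]}\star\psi_{\eta}^{\lambda}+\chi_{[-t,\eta]}\star\psi_{\eta}^{-\lambda}$ one has $e^{\lambda t}\tfrac{\dd}{\dd t}g_{t,\eta}^{\lambda}(\ell)=e^{\lambda\ell}\psi_{\eta}(\ell-t)+e^{-\lambda\ell}\psi_{\eta}(\ell+t)$, so the $t$-integration reconstructs \emph{exactly} $\sump_{[\gamma]}\ell(\gamma)g_{y,\eta}(\ell(\gamma))e^{(\lambda-1)\ell(\gamma)+in\hol\gamma}$ up to $\mathrm{O}_{\Gamma,\eta_0}(1)$ (this is \eqref{Kn-tilde-nontilde}); this requires re-running Lemma~\ref{trace-estimates-lemma} and its odd counterpart for $g_{t,\eta}^{\lambda}$ and $h_{t,\eta}^{\lambda}$, and finally integrating over $\lambda\in[0,1]$ and using the identity $\ell e^{-\ell}\bigl(\int_0^1e^{\lambda\ell}\,\dd\lambda+1/\ell\bigr)=1$ to strip the weight exactly. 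Your proposal contains no analogue of this mechanism (nor any other device making the weight removal exact while retaining smooth-cutoff error terms), so the first asymptotic in \eqref{hol-char-sum-result} is not established by it; the second one is.
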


\begin{proof}
We begin with $K_n(y)$, which is technically simpler. Recall the sum $S_n^P(y)$ defined in \eqref{SnSnP}. Combining Proposition ~\ref{trace-estimates-sharp} and Lemma~\ref{weights-primitivity}, we may write
\begin{gather}
S_n^P(y)=\sump_{\ell(\gamma)\leqslant y}\ell(\gamma)e^{-\ell(\gamma)+in\hol(\gamma)}
=\delta_0(n)\int_{-y}^y\dd\varpi_{\Gamma}^{\ast}(u)+s_n^P(y),\nonumber\\
s_n^P(y)=\mathrm{O}_{\Gamma}\big(e^{2y/3}+n^2y\big).\label{snP-estimate}
\end{gather}
Since $\Gamma$ is discrete, there is a minimum geodesic length $\eta_0(\Gamma)$. With an eye toward summation by parts, we first rewrite $K_n(y)$ as an integral, separating out the principal part:
\begin{equation}
\label{Kny-rewritten}
K_n(y)=\int_{\eta_0(\Gamma)-}^{y+}\frac{e^t}t\,\dd S_n^P(t)=\delta_0(n)\int_{\eta_0(\Gamma)-}^{y+}\frac{e^t}t\frac{\dd}{\dd t}\int_{-t}^t\dd\varpi_{\Gamma}^{\ast}(u)\,\dd t+\int_{\eta_0(\Gamma)-}^{y+}\frac{e^t}t\,\dd s_n^P(t).
\end{equation}
Recalling \eqref{ei-varpi} and \eqref{density-star-def}, the first term equals
\begin{equation}
\label{Kny-part1}
\delta_0(n)\!\int_{\eta_0(\Gamma)
}^{y}\frac{e^t}t\Big(\frac{\dd \varpi_\Gamma^{\ast}}{\dd u}\Big|_{u = t} + \frac{\dd \varpi_\Gamma^{\ast}}{\dd u}\Big|_{u = -t} \Big) \, \dd t
=\delta_0(n)\!\int_2^y\dd\varpi_{\Gamma}(u)+ 
\mathrm{O}_{\Gamma}\Big(\log^{\ast}y+\frac{e^{(1-\nu_k)y}}y\Big),
\end{equation}
recalling from \eqref{density-star-def} the definition of $\dd \varpi_\Gamma^\ast$ and from \eqref{ei-varpi} the notation $1-\nu_j^2$ for the exceptional eigenvalues of the Laplacian, with $0 < \nu_k \leqslant \dots \leqslant \nu_1 <1$ (and we set formally $\nu_0=1$ if $k=0$). Using integration by parts and the estimate \eqref{snP-estimate}, the second term in \eqref{Kny-rewritten} is
\begin{equation}
\label{Kny-part2}
\begin{aligned}
\frac{e^t}{t} s_n^P(t)\bigg|_{\eta_0(\Gamma)-}^{y+} \!\! - \int_{\eta_0(\Gamma)}^y \Big(\frac{e^t}{t}- \frac{e^t}{t^2}\Big) s_n^P(t)\,\dd t
&\ll_{\Gamma} \frac{e^{5y/3}}y+n^2e^y+\int_{\eta_0(\Gamma)}^y\Big(\frac{e^{5t/3}}t+n^2e^t\Big)\,\dd t\\
&\ll_{\Gamma} \frac{e^{5y/3}}y+n^2e^y.
\end{aligned}
\end{equation}
Combining \eqref{Kny-part1} and \eqref{Kny-part2} gives the desired asymptotic for $K_n(y)$.

Now, we turn to $K_n[g_{y,\eta}]$. This case presents a minor technical difficulty in that the first step in summation by parts \eqref{Kny-rewritten}
does not work as cleanly. To address this, we rework the proof of Lemma \ref{trace-estimates-lemma} 
by adjusting the choice of the test function $g_{y,\eta}$ from \eqref{smooth-cutoff-functions}
to a slightly different test function $g^{\lambda}_{t,\eta}:\mathbb{R}\to\mathbb{C}$ given, for $0<\eta\leqslant \eta_0\leqslant t$ and $\lambda\in [0,1]$ as follows:
\begin{equation}
\label{def-modified-g}
\psi_{\eta}^{\lambda}(x)=\psi_{\eta}(x)e^{\lambda x},\quad g_{t,\eta}^{\lambda}=\chi_{[-\eta,t]}\star\psi_{\eta}^{\lambda}\,+\,\chi_{[-t,\eta]}\star\psi_{\eta}^{-\lambda},
\end{equation}
where $\psi_\eta$ is as defined in $\eqref{psi-eta-def}$. We remark that the asymptotic for $K_n[g_{y,\eta}]$ in \eqref{hol-char-sum-result} holds trivially for $y=\mathrm{O}(1)$ (due to the discreteness of $\Gamma$), so from now on we may assume that $y\geqslant\eta_0$.
We compute, using the Schwartz bound for $\hat{\psi}$,
\begin{align*}
\hat{g}_{t,\eta}^{\lambda}(\xi)=\sum\nolimits_{\pm}\hat{\chi}_{\pm[-\eta,t]}(\xi)\hat{\psi}^{\pm\lambda}_{\eta}(\xi)
&=\sum\nolimits_{\pm}\frac{e^{\pm 2\pi i\eta\xi}-e^{\mp 2\pi it\xi}}{\pm 2\pi i\xi}\hat{\psi}\Big(\eta\xi\pm\frac{i \eta \lambda}{2\pi}\Big)\\
&\ll_{\eta_0,N}\min\Big(t,\frac1{|\xi|},\frac1{|\xi|(\eta|\xi|)^N}\Big).
\end{align*}

The function $g^{\lambda}_{t,\eta}:\mathbb{R}\to\mathbb{R}$ defined in \eqref{def-modified-g}
is smooth, even, and compactly supported, so it may be used in the trace formula of Theorem~\ref{EvenTrace}.
It also satisfies $\hat{g}^{\lambda}_{t,\eta}(0)=2(t+\eta)\hat{\psi}(i\eta \lambda/2\pi)$, $g^{\lambda}_{t,\eta}(0)=2\hat{\psi}(i\lambda\eta/2\pi)= \mathrm{O}_{\eta_0}(1)$,
$(g^{\lambda}_{t,\eta})''(0)=0$. Running the proof of Lemma \ref{trace-estimates-lemma}
with $g^{\lambda}_{t,\eta}$ in place of $g_{y,\eta}$ gives
\begin{equation}
\label{twisted-asymp}
T_n^{\cos}[g^{\lambda}_{t, \eta}]=\delta_0(n)\int_{-\infty}^{\infty}g^{\lambda}_{t,\eta}(u)\,\dd\varpi_{\Gamma}^{\ast}(u)
+\mathrm{O}_{\Gamma,\eta_0}\left(\frac1{\eta^2}+(1+n^2)\Big(\log^{\ast}\frac1{\eta}+t\Big)\right).
\end{equation}
Defining analogously $h_{t,\eta}^{\lambda}=\chi_{[-\eta,t]}\star\psi_{\eta}^{\lambda}\,-\,\chi_{[-t,\eta]}\star\psi_{\eta}^{-\lambda}$, we find as in Lemma~\ref{trace-estimates-lemma} that
\begin{equation}
\label{twisted-asymp-2}
T_n^{\sin}[h^{\lambda}_{t, \eta}]=\mathrm{O}_{\Gamma,\eta_0}\left(\frac1{\eta^2}+(1+n^2)\Big(\log^{\ast}\frac1{\eta}+t\Big)\right)
\end{equation}
and note that $h_{t,\eta}^{\lambda}=g_{t,\eta}^{\lambda}\cdot\sgn$ outside $[-2\eta_0,2\eta_0]$.

Note that $\|g^{\lambda}_{t,\eta}\|_{\infty}\leqslant 2\hat{\psi}(i\eta\lambda/2\pi)=\mathrm{O}_{\eta_0}(1)$ and $\mathrm{supp}\,g^{\lambda}_{t,\eta}\subseteq [-t-\eta,t+\eta]$, and recall the sum $S_n^P[g^{\lambda}_{t,\eta}]$ defined in \eqref{SnTn-smooth}.
Combining \eqref{twisted-asymp}, \eqref{twisted-asymp-2}, and Lemma~\ref{weights-primitivity-smooth}, we may write
\begin{gather}
S_n^P[g^{\lambda}_{t,\eta}]=\sump_{[\gamma]}\ell(\gamma)g^{\lambda}_{t,\eta}(\ell(\gamma))e^{-\ell(\gamma)+in\hol(\gamma)}
=\delta_0(n)\int_{-\infty}^{\infty}g^{\lambda}_{t,\eta}(u)\,\dd \varpi^{\ast}_{\Gamma}(u)+s_n^P[g^{\lambda}_{t,\eta}],\nonumber\\
s_n^P[g^{\lambda}_{t,\eta}]=\mathrm{O}_{\Gamma,\eta_0}\left(\frac1{\eta^2}+(1+n^2)\Big(\log^{\ast}\frac1{\eta}+t\Big)\right). \label{snPg-estimate}
\end{gather}
From the definition \eqref{def-modified-g}, we have that
\begin{align*}
(\mathrm{d}g^{\lambda}_{t,\eta}/\mathrm{d}t)(\ell)
&=\psi^{\lambda}_{\eta}(\ell-t)+\psi_{\eta}^{-\lambda}(\ell+t)\\
&=\psi_{\eta}(\ell-t)e^{\lambda(\ell-t)}+\psi_{\eta}(\ell+t)e^{-\lambda(\ell+t)}.
\end{align*}
Therefore,
\begin{equation}
\label{Kn-tilde-nontilde}
\widetilde{K}_n^{\lambda}[g_{y,\eta}]:=
\int_{\eta_0+}^y e^{\lambda t} \frac{\mathrm{d}}{\mathrm{d}t} S_n^P[g_{t,\eta}^\lambda] \, \dd t=K_n^\lambda[g_{y, \eta}]+\mathrm{O}_{\Gamma,\eta_0}(1),
\end{equation}
where
\[ K_n^{\lambda}[g_{y,\eta}] = \sump_{[\gamma]}\ell(\gamma) g_{y, \eta}(\ell(\gamma)) e^{(\lambda - 1)\ell(\gamma) + i n \hol \gamma} \]
and the $\mathrm{O}_{\Gamma,\eta_0}(1)$ term accounts for classes $[\gamma]$ with $\ell(\gamma)\leqslant 2\eta_0$ (in particular, this harmless term may be omitted if $\eta_0\leqslant\frac12\eta_0(\Gamma)$).
Following the argument in \eqref{Kny-rewritten}, we separate the integral representing $\widetilde{K}_n^{\lambda}[g_{y,\eta}]$ as
\[ \widetilde{K}_n^{\lambda}[g_{y,\eta}]=\delta_0(n)\int_{\eta_0}^{y}e^{\lambda t}\frac{\mathrm{d}}{\mathrm{d}t}\int_{-\infty}^{\infty}g_{t,\eta}^{\lambda}(u)\,\dd\varpi_{\Gamma}^{\ast}(u)\,\dd t+\int_{\eta_0+}^{y}e^{\lambda t}\frac{\mathrm{d}}{\mathrm{d}t}s_n^P[g^{\lambda}_{t,\eta}]\,\dd t, \]
where the first term equals, with notation as in \eqref{Kny-part1},
\begin{equation}
\label{first-term-twisted}
\delta_0(n)\int_2^{\infty}g_{y,\eta}(u)e^{\lambda u}\,\dd\varpi_{\Gamma}^{\ast}(u)+\mathrm{O}_{\Gamma,\eta_0}\Big(y+\mathbf{1}_{\lambda>\nu_k+1/y}\frac{e^{(\lambda-\nu_k)y}}{\lambda-\nu_k}\Big).
\end{equation}

For the second term, we use integration by parts and estimate \eqref{snPg-estimate} to find that it equals
\begin{equation}
\label{second-term-twisted}
e^{\lambda t}s_n^P[g^{\lambda}_{t,\eta}]\bigg|_{\eta_0+}^{y}-\lambda\int_{\eta_0}^{y}s_n^P[g^{\lambda}_{t,\eta}]e^{\lambda t}\,\dd t
\ll_{\Gamma,\eta_0}e^{\lambda y} \bigg(\frac1{\eta^2}+(1+n^2)\Big(\log^{\ast}\frac1{\eta}+y\Big) \bigg).
\end{equation}

Combining \eqref{Kn-tilde-nontilde}, \eqref{first-term-twisted} and \eqref{second-term-twisted} gives an asymptotic for $K_n^{\lambda}[g_{y,\eta}]$. Finally, we recover the desired sum $K_n[g_{y,\eta}]$ defined in \eqref{holonomy-character-sums} as
 \begin{align}
 K_n[g_{y,\eta}]
&=\sump_{[\gamma]}e^{in\hol(\gamma)}g_{y,\eta}(\ell(\gamma))\ell(\gamma)e^{-\ell(\gamma)}\bigg[\int_0^1e^{\lambda\ell(\gamma)}\,\dd\lambda+\frac1{\ell(\gamma)}\bigg]\nonumber\\
&=\int_0^1 K_n^{\lambda}[g_{y,\eta}]\,\dd\lambda+\mathrm{O}_{\Gamma,\eta_0}\Big(\frac{e^y}y\Big)\nonumber\\
&=\delta_0(n)\int_2^{\infty}g_{y,\eta}(u)\,\dd\varpi_{\Gamma}(u)+
\mathrm{O}_{\Gamma, \eta_0}
\bigg(\frac{e^y}y \bigg(\frac1{\eta^2}+(1+n^2)\Big(\log^{\ast}\frac1{\eta}+y\Big) \bigg)\bigg),
 \label{Kn-obtained}
 \end{align}
keeping in mind the definition \eqref{ei-varpi}, and using, for example, the already proved asymptotic \eqref{PGT-for-record} for $K_0(y)$ at the second step. We may assume that $\log^{\ast}(1/\eta)=\mathrm{O}_{\Gamma}(y)$ since otherwise (in light of \eqref{ei-varpi} and \eqref{PGT-for-record}) the error term for $K_n[g_{y,\eta}]$ in \eqref{hol-char-sum-result} clearly dominates all other terms. With this, \eqref{Kn-obtained} completes the proof of Proposition~\ref{holonomy-sums-prop}.
\end{proof}

\begin{remark}
\label{remark-holonomy-sums}
As already remarked, Proposition~\ref{holonomy-sums-prop} contains as a special case the Prime Geodesic Theorem in the form
\begin{equation}
\label{PGT-for-record}
\pi_{\Gamma}(y)=K_0(y)=\int_2^y\dd\varpi_{\Gamma}(u)+\mathrm{O}_{\Gamma}\Big(\frac{e^{5y/3}}y\Big),
\end{equation}
which we record here for future reference.

It is also instructive to consider how the asymptotic obtained in Proposition~\ref{holonomy-sums-prop} for $K_n[g_{y,\eta}]$ evolves as $\eta>0$ varies from $\eta$ of constant size, a case we may think of as a model of summation with a smooth cutoff in length on a $\mathrm{O}(1)$ scale, down to $\eta=e^{-y/3}$, which essentially corresponds to the sharp cutoff in $K_n(y)$ (cf.~proof of Proposition ~\ref{trace-estimates-sharp}). For $\eta=\eta_0$ of constant size, Proposition~\ref{holonomy-sums-prop} states that
\begin{equation}
\label{Kn-eta0-fixed}
K_n[g_{y,\eta_0}]=\delta_0(n)\int_2^{\infty}g_{y,\eta_0}(u)\,\dd\varpi_{\Gamma}(u)+\mathrm{O}_{\Gamma,\eta_0}\big((n^2+1)e^y\big).
\end{equation}
Thus, for a fixed $n\neq 0$, the sum $K_n[g_{y,\eta_0}]$ consisting of $\sim_{\Gamma, \eta_0}e^{2y}/2y$ terms of unit size exhibits essentially square-root cancellation as $y\to\infty$. In fact, power-saving cancellation in $K_n[g_{y,\eta}]$ and $K_n(y)$ persists in the range $n\ll e^{(1/2-\delta)y}$, a statement which should be compared to the range of uniformity in the prime number theorem for arithmetic progressions to large moduli.

As $\eta$ decreases and the cutoff in $K_n[g_{y,\eta}]$ becomes steeper, the first error term (which is independent of $n$) becomes more pronounced and, for $\eta=e^{-y/3}$, essentially recovers the first error term in the asymptotic for the sharp count $K_n(y)$. This term, which dominates for a fixed $n\in\mathbb{Z}$ and $y\to\infty$, is rooted in the passage to the sharp count. The second error term detects for large $n\in\mathbb{Z}$ the influence of the oscillating holonomy factor $e^{in\hol(\gamma)}$.
\end{remark}

\section{Ambient prime geodesic theorems}\label{ambient-pgt-section}
In this section, we prove our principal results providing counts for prime geodesics on $M$ with control on their length and holonomy simultaneously.

The first of the two main results of this section is the Ambient Prime Geodesic Theorem (Theorem ~\ref{sharpsharp}),
 which features a sharp cutoff both in length $\ell(\gamma)\leqslant y$
 and holonomy, as in \eqref{SW-effective}. For many analytic purposes, including the existence and properties of various limiting distributions (see, for example, Corollary~\ref{equid-corr} of Theorem~\ref{SarnakTheorem}), smooth-cutoff results suffice.  Since the passage to the sharp count is the leading contributor to error terms, and to emphasize the parallel between length and holonomy aspects, we provide $4=2\times 2$ propositions in \S\ref{smooth-count-subsec} and \S\ref{ambient-pgt-sharp-section},
featuring each combination of smooth/sharp counts in length/holonomy, with explicit error terms depending on the steepness of the smooth cutoff. Further, in the concluding \S\ref{APGT-shrinking-subsec}, we consider the short-range ambient counting problems and show how a consistent ambiental passage from smooth to sharp counting leads to stronger corresponding short-range  counts with smooth/sharp cutoffs, including our second main result, Theorem~\ref{short-range-ambient-theorem}.

To effectuate the transition from a smooth to sharp cutoff in holonomy, we will use, for an arbitrary interval $J\subseteq\mathbb{R}/2\pi\mathbb{Z}$ and $0<\eta'\leqslant 2\pi$ 
 the function $f_{J,\eta'}:\mathbb{R}/2\pi\mathbb{Z}\to\mathbb{R}$ given by
\begin{equation}
\label{f-I-eps}
f_{J,\eta'}(t)=\sum_{n\in\mathbb{Z}}\int_J\psi_{\eta'}(t+2n\pi-\theta)\,\dd\theta,
\end{equation}
with $\psi_{\eta'}$ as in \eqref{psi-eta-def}. In other words, $f_{J,\eta'}$ is the $2\pi\mathbb{Z}$-periodic convolution $\chi_J\star\psi_{\eta'}$ and plays for holonomy the role of $g_{y,\eta}$ and $h_{y,\eta}$ of \eqref{smooth-cutoff-functions} for lengths. In particular, $f_{[\theta,\theta'],\eta'}$ is a smooth, non-negative cutoff function of height 1, which is supported on $[\theta-\eta',\theta'+\eta']+2\pi\mathbb{Z}$ and agrees with $\chi_{[\theta,\theta']}$ outside the set $([\theta-\eta',\theta+\eta']\cup[\theta'-\eta',\theta'+\eta'])+2\pi\mathbb{Z}$.

\subsection{Smooth count}
\label{smooth-count-subsec}
We prove our prime geodesic theorems in holonomy classes by spectrally decomposing the holonomy and then invoking estimates on holonomy character sums in Proposition~\ref{holonomy-sums-prop}. We observed in Proposition \ref{holonomy-sums-prop} and Remark~\ref{remark-holonomy-sums} that, when lengths are weighted with a smooth function $g_{y,\eta}$ with not too small $\eta>0$ (say, of fixed size), so that the cutoff is not too steep, one obtains strong asymptotics for holonomy character sums with very modest error terms, such as essentially square-root cancellation in \eqref{Kn-eta0-fixed}.

Our first proposition is the baseline count for sampling geodesics with a smooth function in both the length and the holonomy. For cutoffs of fixed steepness (equivalently with fixed ``uncertainty windows,'' which we denote by $\eta,\eta'>0$) in both the length and the holonomy, Proposition~\ref{SmoothSmooth} features an error term of essentially square-root strength, well sharper than the sharp-cutoff counts in either direction in Proposition~\ref{SharpHolSmoothLength} and Theorems~\ref{SarnakTheorem} and \ref{sharpsharp} below.

\begin{prop}
\label{SmoothSmooth}
Let $\Gamma<\PSL_2\mathbb{C}$ be a discrete, co-compact, torsion-free subgroup. Then, for every smooth function $f:\mathbb{R}/2\pi\mathbb{Z}\to\mathbb{C}$ and every $y>0$ and $0<\eta\leqslant\eta_0$,
\begin{align*}
\pi_{\Gamma}(g_{y,\eta},f)&:=
\sump_{[\gamma]} f(\hol(\gamma)) g_{y,\eta}(\ell(\gamma))\\
&= \frac1{2\pi}\int_0^{2\pi}f(\theta)\,\dd\theta\cdot\int_2^{\infty}g_{y,\eta}(u)\,\dd\varpi_{\Gamma}(u)+ \mathrm{O}_{\Gamma,\eta_0}\Big(\frac{e^y}{y\eta^2}\|\hat{f}\|_1+e^y\|\hat{f}\|_{2,1}\Big)\\
&=\frac1{2\pi}\int_0^{2\pi}f(\theta)\,\dd\theta\cdot\pi_{\Gamma}(g_{y,\eta})+ \mathrm{O}_{\Gamma,f,\eta}(e^y),
\end{align*}
where $\varpi_{\Gamma}$ is as in \eqref{ei-varpi} and $\|\hat{f}\|_{2,1}=\|\hat{f}\|_1+\|\widehat{f''}\|_1$.
In particular, for every interval $J\subseteq\mathbb{R}/2\pi\mathbb{Z}$, $0<\eta'\leqslant 2\pi$, and $f_{J,\eta'}:\mathbb{R}/2\pi\mathbb{Z}\to\mathbb{R}$ as in \eqref{f-I-eps},
\[ \pi_{\Gamma}(g_{y,\eta},f_{J,\eta'})
=\frac{|J|}{2\pi}\int_2^{\infty}g_{y,\eta}(u)\dd\varpi_{\Gamma}(u)+\mathrm{O}_{\Gamma,\eta_0}\Big(\frac{e^y}{y\eta^2}\log^{\ast}\frac1{\eta'}+\frac{e^y}{\eta'{}^2}\Big). \]
\end{prop}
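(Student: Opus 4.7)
The plan is to spectrally decompose $f$ on $\mathbb{R}/2\pi\mathbb{Z}$ using its Fourier series $f(\theta) = \frac{1}{2\pi}\sum_{n\in\mathbb{Z}} \hat f(n) e^{in\theta}$, interchange summation, and recognize the interior sums as the holonomy character sums $K_n[g_{y,\eta}]$ of \eqref{holonomy-character-sums}. Specifically, this gives
\[
\pi_\Gamma(g_{y,\eta},f) = \frac{1}{2\pi}\sum_{n\in\mathbb{Z}} \hat f(n) K_n[g_{y,\eta}],
\]
at which point Proposition~\ref{holonomy-sums-prop} supplies an asymptotic for each $K_n[g_{y,\eta}]$, with main term present only when $n=0$.

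The term $n=0$ contributes $\frac{\hat f(0)}{2\pi} \int_2^{\infty} g_{y,\eta}(u)\,\dd\varpi_\Gamma(u) = \frac{1}{2\pi}\int_0^{2\pi} f(\theta)\,\dd\theta\cdot\int_2^{\infty} g_{y,\eta}\,\dd\varpi_\Gamma$, which is the claimed main term. The error terms from \eqref{hol-char-sum-result} contribute
\[
\ll_{\Gamma,\eta_0} e^y \sum_{n\in\mathbb{Z}} |\hat f(n)|\Bigl(\frac{1}{y\eta^2} + n^2 + 1\Bigr) \ll \frac{e^y}{y\eta^2}\|\hat f\|_1 + e^y\bigl(\|\hat f\|_1 + \|\widehat{f''}\|_1\bigr),
\]
using $\widehat{f''}(n) = -n^2 \hat f(n)$, which is precisely $\frac{e^y}{y\eta^2}\|\hat f\|_1 + e^y\|\hat f\|_{2,1}$. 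The second form in the proposition follows by writing $\int_2^{\infty} g_{y,\eta}\,\dd\varpi_\Gamma = \pi_\Gamma(g_{y,\eta}) + \mathrm{O}_{\Gamma,\eta_0}\bigl(e^y/(y\eta^2)\bigr)$, applying again the $n=0$ case of Proposition~\ref{holonomy-sums-prop}, and absorbing all $f$-dependence into the implied constants.

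For the final assertion, apply the smooth-smooth count to $f=f_{I,\eta'}$. Since $f_{I,\eta'} = \chi_I \star \psi_{\eta'}$ (as a $2\pi\mathbb{Z}$-periodic convolution), its Fourier coefficients factor as $\hat{f}_{I,\eta'}(n) = \hat{\chi}_I(n)\hat\psi(\eta' n/(2\pi))$, with $\hat{f}_{I,\eta'}(0) = |I|$ (which immediately gives the desired main term). Using the elementary bounds $|\hat{\chi}_I(n)| \ll \min(|I|, 1/|n|)$ for $n\neq 0$, together with the Schwartz decay of $\hat\psi$, a routine dyadic estimate splitting at $|n| \asymp 1/\eta'$ yields $\|\hat{f}_{I,\eta'}\|_1 \ll \log^{\ast}(1/\eta')$ and $\|\hat{f}_{I,\eta'}\|_{2,1} \ll 1/\eta'^{2}$, which upon insertion into the general bound produces the stated error term. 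There is no real obstacle here; the main subtlety is simply keeping track of which error in Proposition~\ref{holonomy-sums-prop} is $n$-dependent versus $n$-independent so that the two bounds $\|\hat f\|_1$ and $\|\hat f\|_{2,1}$ appear in the right places.
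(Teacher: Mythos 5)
Your proof is correct and follows essentially the same route as the paper: Fourier-expand $f$, interchange summation to produce the holonomy character sums $K_n[g_{y,\eta}]$, apply Proposition~\ref{holonomy-sums-prop} to extract the $n=0$ main term and absorb the rest into the stated error, then specialize to $f_{I,\eta'}$ using the factored Fourier coefficients and Schwartz decay of $\hat\psi$. One very small slip: the $n=0$ case of Proposition~\ref{holonomy-sums-prop} gives $\int_2^{\infty}g_{y,\eta}\,\dd\varpi_\Gamma = \pi_\Gamma(g_{y,\eta}) + \mathrm{O}_{\Gamma,\eta_0}(e^y(1/(y\eta^2)+1))$, not just $\mathrm{O}(e^y/(y\eta^2))$, but this is harmless since it is still absorbed into $\mathrm{O}_{\Gamma,f,\eta}(e^y)$.
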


\begin{proof}
	Using the Fourier expansion $f(\theta) = (1/2\pi)\sum_{n\in\mathbb{Z}}\hat f(n) e^{in\theta}$ and applying Proposition~\ref{holonomy-sums-prop} to estimate the resulting holonomy character sums $K_n[g_{y,\eta}]$ defined in \eqref{holonomy-character-sums}, we find that
\begin{align*}
\sump_{[\gamma]} f(\hol(\gamma)) g_{y,\eta}(\ell(\gamma))&=\frac1{2\pi}\sump_{[\gamma]} g_{y,\eta}(\ell(\gamma)) \sum_{n\in\mathbb{Z}}\hat{f}(n)e^{in\hol\gamma}=\frac1{2\pi}\sum_{n\in\mathbb{Z}}\hat{f}(n)K_n[g_{y,\eta}]\\
&=\frac1{2\pi}\hat{f}(0)\pi_{\Gamma}(g_{y,\eta})+\mathrm{O}_{\Gamma,\eta_0}\bigg(\sum_{n\neq 0}|\hat{f}(n)|e^y\Big(\frac1{y\eta^2}+n^2+1\Big)\bigg),
\end{align*}
which completes the proof for a general smooth $f:\mathbb{R}/2\pi\mathbb{Z}\to\mathbb{C}$, keeping in mind that $\pi_{\Gamma}(g_{y,\eta})=K_0[g_{y,\eta}]$ and Proposition~\ref{holonomy-sums-prop}. For the specific function $f_{J,\eta'}$ defined in \eqref{f-I-eps}, we compute its Fourier coefficients by the usual unfolding argument as
\[ \hat{f}_{J,\eta'}(n)=\hat{\chi}_J(n)\hat{\psi}(\eta' n)=\frac{e^{-in\theta'}-e^{-in\theta}}{-in}\hat{\psi}(\eta' n), \]
where $J=[\theta,\theta']$. 	Using the Schwartz estimate $|\hat{\psi}(\eta' n)|\ll_k1/(1+\eta'|n|)^k$, we can bound
\begin{equation}
\label{fourier-sum-bounds}
\begin{alignedat}{7}
	&\sum_{n\in\mathbb{Z}}|\hat{f}_{J,\eta'}(n)|&&\ll|J|+\sum_{n\neq 0}\frac{|\hat{\psi}(\eta' n)|}n&&\ll\sum_{n\leqslant 1/\eta'}\frac1n+\sum_{n>1/\eta'}\frac1{\eta'{}^2n^3}&&\ll\log^{\ast}\frac1{\eta'},\\
	&\sum_{n\in\mathbb{Z}}(1+n^2)|\hat{f}_{J,\eta'}(n)|&&\ll|J|+\sum_{n\neq 0}n|\hat{\psi}(\eta' n)|&&\ll\sum_{n\leqslant 1/\eta'}n+\sum_{n>1/\eta'}\frac{n}{\eta'{}^4n^4}&&\ll\frac1{\eta'{}^2}.
\end{alignedat}
\end{equation}
This completes the proof.
\end{proof}

\subsection{Passage to sharp counts and Prime Geodesic Theorems}\label{ambient-pgt-sharp-section}
In this section, we replace the smooth cutoff in Proposition~\ref{SmoothSmooth} by a sharp cutoff in one or both of the length and holonomy. Theorem~\ref{SarnakTheorem} features the familiar sharp cutoff in length, a hallmark of a traditional Prime Geodesic Theorem, and implies effective equidistribution of holonomy in short intervals of length. To stress the conceptual symmetry between the two parameters, we also prove Proposition~\ref{SharpHolSmoothLength}, an asymptotic count with a sharp cutoff in holonomy and smoothly sampled length. Finally, in Theorem~\ref{sharpsharp} and its Corollary~\ref{ambient-short-range}, we prove ambient prime geodesic counts with a sharp cutoff in both length and holonomy.

\begin{theorem}
	\label{SarnakTheorem}
	Let $\Gamma<\PSL_2\mathbb{C}$ be a discrete, co-compact, torsion-free subgroup, and let $f:\mathbb{R}/2\pi\mathbb{Z}\to\mathbb{C}$ be an arbitrary smooth function. Then, for $y>0$,
	\begin{align*}
	\pi_{\Gamma}(y,f):=\sump_{\ell(\gamma) \leqslant y} f(\hol(\gamma))
	&=\frac1{2\pi}\int_0^{2\pi}f(\theta)\,\dd\theta\cdot\int_2^y\dd\varpi_{\Gamma}(u)+\mathrm{O}_{\Gamma}\Big(\|\hat{f}\|_1\frac{e^{5y/3}}y+\|\widehat{f''}\|_1e^y\Big)\\
	&=\frac1{2\pi}\int_0^{2\pi}f(\theta)\,\dd\theta\cdot\pi_{\Gamma}(y)+\mathrm{O}_{\Gamma,f}\Big(\frac{e^{5y/3}}y\Big),
	\end{align*}
where $\varpi_{\Gamma}$ is as in \eqref{ei-varpi}.
		In particular, for every interval $J\subseteq\mathbb{R}/2\pi\mathbb{Z}$, $0<\eta'\leqslant 2\pi$, and $f_{J,\eta'}:\mathbb{R}/2\pi\mathbb{Z}\to\mathbb{R}$ as in \eqref{f-I-eps},
	\[ \pi_{\Gamma}(y,f_{J,\eta'})
	=\frac{|J|}{2\pi}\pi_{\Gamma}(y)+\mathrm{O}_{\Gamma}\Big(\frac{e^{5y/3}}y\log^{\ast}\frac1{\eta'}+\frac{e^y}{\eta'{}^2}\Big). \]
\end{theorem}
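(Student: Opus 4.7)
The plan is to proceed in direct analogy with Proposition~\ref{SmoothSmooth}, using the sharp-cutoff estimate for $K_n(y)$ in \eqref{hol-char-sum-result} in place of the smooth-cutoff estimate for $K_n[g_{y,\eta}]$. Since $f$ is smooth and $2\pi$-periodic, it admits a rapidly convergent Fourier expansion $f(\theta) = (1/2\pi)\sum_{n\in\mathbb{Z}}\hat{f}(n)e^{in\theta}$; substituting this into the definition of $\pi_\Gamma(y,f)$ and exchanging sums (legitimate because the outer sum is finite, $\Gamma$ being discrete and co-compact) would give
\[ \pi_\Gamma(y,f) = \frac{1}{2\pi}\sum_{n\in\mathbb{Z}}\hat{f}(n)K_n(y), \]
with $K_n(y)$ as in \eqref{holonomy-character-sums}.

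Next, I would separate the $n=0$ term from the rest. The $n=0$ term is $(1/2\pi)\hat{f}(0)K_0(y)$, and using $\hat{f}(0) = \int_0^{2\pi}f(\theta)\,\dd\theta$ together with the Prime Geodesic Theorem in the form \eqref{PGT-for-record}, $K_0(y) = \int_2^y\dd\varpi_\Gamma(u) + \mathrm{O}_\Gamma(e^{5y/3}/y)$, this recovers the main term plus an error absorbed into $\|\hat{f}\|_1\,e^{5y/3}/y$. For $n\neq 0$, I would apply the estimate $K_n(y)\ll_\Gamma e^{5y/3}/y + n^2 e^y$ term by term, yielding
\[ \sum_{n\neq 0}|\hat{f}(n)|\,|K_n(y)| \ll_\Gamma \|\hat{f}\|_1\frac{e^{5y/3}}{y} + e^y\sum_{n\in\mathbb{Z}}n^2|\hat{f}(n)|. \]
The final sum equals $\|\widehat{f''}\|_1$ since $\widehat{f''}(n) = -n^2\hat{f}(n)$. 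This already yields the first (general) display in the theorem, and the second display follows by absorbing $\|\widehat{f''}\|_1 e^y$ into the implicit $f$-dependence.

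For the specialization to $f_{I,\eta'}$ defined in \eqref{f-I-eps}, I would invoke the Fourier coefficient bounds already established in \eqref{fourier-sum-bounds} in the course of proving Proposition~\ref{SmoothSmooth}, namely $\|\hat{f}_{I,\eta'}\|_1 \ll \log^\ast(1/\eta')$ and $\|\widehat{f_{I,\eta'}''}\|_1 = \sum_n n^2|\hat{f}_{I,\eta'}(n)| \ll 1/\eta'^2$. Combined with $\hat{f}_{I,\eta'}(0) = |I|$, these feed directly into the general asymptotic and give the stated error term $\mathrm{O}_\Gamma(e^{5y/3}(\log^\ast(1/\eta'))/y + e^y/\eta'^2)$.

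There is no substantial obstacle in this argument; the result is a direct spectral-side consequence of Proposition~\ref{holonomy-sums-prop}, with the heavy lifting (the trace formula, Weyl's law, passage to sharp length cutoff, removal of weights and imprimitive classes) already done there. The only small point worth verifying is the absolute convergence of the rearranged Fourier series, which holds because $f\in C^2(\mathbb{R}/2\pi\mathbb{Z})$ makes both $\|\hat{f}\|_1$ and $\|\widehat{f''}\|_1$ finite; for smoother $f$ the bounds are of course even better.
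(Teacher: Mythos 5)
Your proof is correct and follows essentially the same route as the paper: Fourier-expand $f$, interchange the (finite) outer sum with the Fourier series, isolate the $n=0$ term via the Prime Geodesic Theorem \eqref{PGT-for-record}, bound the $n\neq 0$ terms with the sharp-cutoff estimate on $K_n(y)$ from Proposition~\ref{holonomy-sums-prop}, identify $\sum_n n^2|\hat f(n)|=\|\widehat{f''}\|_1$, and specialize to $f_{I,\eta'}$ using \eqref{fourier-sum-bounds}. No discrepancies.
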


\begin{proof}
	Using the Fourier expansion $f(\theta) = (1/2\pi)\sum_{n\in\mathbb{Z}}\hat f(n) e^{in\theta}$ and applying Proposition~\ref{holonomy-sums-prop} to estimate the resulting holonomy character sums $K_n(y)$ defined in \eqref{holonomy-character-sums}, we find that
	\begin{align*}
	\sump_{\ell(\gamma) \leqslant y} f(\hol(\gamma))
	& =  \frac1{2\pi}\sump_{\ell(\gamma) \leqslant y} \sum_{n\in\mathbb{Z}}\hat f(n) e^{in \hol \gamma}
	= \frac1{2\pi}\sum_{n\in\mathbb{Z}} \hat f(n) K_n(y)\\
	& = \frac1{2\pi}\hat f(0)\pi_{\Gamma}(y) + \mathrm{O}_{\Gamma}\Big(\frac{e^{5y/3}}y\sum_{n \neq 0} |\hat f(n)| + e^y \sum_{n \neq 0} n^2 |\hat f(n)|\Big),
	\end{align*}
	which completes the proof for a general smooth $f:\mathbb{R}/2\pi\mathbb{Z}\to\mathbb{C}$, keeping in mind \eqref{PGT-for-record}. The final claim follows by specializing these bounds to $f_{J,\eta'}$ and using estimates \eqref{fourier-sum-bounds}.
	\end{proof}

For a fixed smooth $f:\mathbb{R}/2\pi\mathbb{Z}\to\mathbb{C}$, Theorem~\ref{SarnakTheorem} recovers \cite[Theorem 1]{SarWa} in the present setting of compact hyperbolic 3-manifolds. An immediate corollary of Theorem~\ref{SarnakTheorem}, coupled with the Prime Geodesic Theorem~\eqref{PGT-for-record}, is the following equidistribution statement.

\begin{corr}[Equidistribution of holonomy]
\label{equid-corr}
Let $\Gamma<\PSL_2\mathbb{C}$ be a discrete, co-compact, torsion-free subgroup. Then, the holonomy of geodesics on $\Gamma\backslash\mathbb{H}^3$ of length $\ell(\gamma)\leqslant y$ is equidistributed in $\mathbb{R}/2\pi\mathbb{Z}$ as $y\to\infty$. In fact, given any collection of intervals $I_n=[y_n',y_n]$ ($0\leqslant y_n'\leqslant y_n$) satisfying $|I_n|\,/\,e^{-y_n/3}\to\infty$,
\[ \frac1{\pi_{\Gamma}(I_n)}\sum_{\ell(\gamma)\in I_n}\delta_{\{\hol(\gamma)\}}
\,\xrightarrow{\,\mathrm{weak}^{\ast}\,}\,\frac1{2\pi}\dd\theta_{\mathbb{R}/2\pi\mathbb{Z}}\quad (n\to\infty). \]
\end{corr}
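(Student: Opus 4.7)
The plan is to establish weak-$*$ convergence by testing against a dense family of test functions. Since $\mathbb{R}/2\pi\mathbb{Z}$ is compact and a Borel probability measure on it is determined by its values against smooth functions, it suffices to prove that for every smooth $f:\mathbb{R}/2\pi\mathbb{Z}\to\mathbb{C}$,
\begin{equation*}
\frac{1}{\pi_{\Gamma}(I_n)}\sum_{\ell(\gamma)\in I_n}f(\hol(\gamma))\;\longrightarrow\;\frac{1}{2\pi}\int_0^{2\pi}f(\theta)\,\dd\theta\quad(n\to\infty).
\end{equation*}

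First I would apply Theorem~\ref{SarnakTheorem} at the two endpoints of $I_n=[y_n',y_n]$ and subtract. Writing $\pi_{\Gamma}(I_n):=\pi_{\Gamma}(y_n)-\pi_{\Gamma}(y_n')$, the resulting identity reads
\begin{equation*}
\sum_{\ell(\gamma)\in I_n}f(\hol(\gamma))=\frac{1}{2\pi}\int_0^{2\pi}f(\theta)\,\dd\theta\cdot\pi_{\Gamma}(I_n)+\mathrm{O}_{\Gamma,f}\!\left(\frac{e^{5y_n/3}}{y_n}\right),
\end{equation*}
so the corollary reduces to the statement that the ratio of this error term to $\pi_{\Gamma}(I_n)$ tends to $0$.

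Next I would bound $\pi_{\Gamma}(I_n)$ from below using the Prime Geodesic Theorem~\eqref{PGT-for-record} together with the explicit density $\dd\varpi_{\Gamma}(u)/\dd u\asymp e^{2u}/u$ from~\eqref{ei-varpi}. A direct computation gives
\begin{equation*}
\int_{y_n'}^{y_n}\dd\varpi_{\Gamma}(u)\;\asymp\;\frac{e^{2y_n}}{y_n}\bigl(1-e^{-2|I_n|}\bigr)\;\asymp\;\frac{e^{2y_n}}{y_n}\min\bigl(|I_n|,1\bigr),
\end{equation*}
and the hypothesis $|I_n|\,e^{y_n/3}\to\infty$ ensures that this main term strictly dominates the $\mathrm{O}_{\Gamma}(e^{5y_n/3}/y_n)$ error in~\eqref{PGT-for-record}. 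Hence $\pi_{\Gamma}(I_n)\asymp_{\Gamma}\min(|I_n|,1)\,e^{2y_n}/y_n$, and therefore
\begin{equation*}
\frac{e^{5y_n/3}/y_n}{\pi_{\Gamma}(I_n)}\;\ll_{\Gamma}\;\frac{e^{-y_n/3}}{\min(|I_n|,1)}\;\longrightarrow\;0
\end{equation*}
by hypothesis, which yields the desired convergence.

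The only mildly subtle point is establishing the two-sided asymptotic for $\pi_{\Gamma}(I_n)$ in the shrinking regime $|I_n|\to 0$: the condition $|I_n|\gg e^{-y_n/3}$ is precisely the threshold at which \eqref{PGT-for-record} still delivers a genuine asymptotic for the incremental geodesic count, with the power-saving error being outpaced by the increment of the main term. Beyond this observation, the argument is a routine deduction from Theorem~\ref{SarnakTheorem}; note in particular that the baseline equidistribution statement (for the sharp cutoff $\ell(\gamma)\leqslant y$) is recovered as the special case $y_n'=0$.
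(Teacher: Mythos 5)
Your proof is correct, and since the paper offers no explicit argument for this corollary (it is stated as ``an immediate corollary'' of Theorem~\ref{SarnakTheorem} and \eqref{PGT-for-record}), you have filled in exactly the expected deduction: test against smooth $f$, subtract two instances of Theorem~\ref{SarnakTheorem}, lower-bound $\pi_{\Gamma}(I_n)$ via \eqref{PGT-for-record} and the explicit density $\dd\varpi_{\Gamma}(u)/\dd u\asymp_{\Gamma}e^{2u}/u$, and observe that the resulting error-to-main ratio $\ll_{\Gamma}e^{-y_n/3}/\min(|I_n|,1)\to 0$ under the hypothesis $|I_n|e^{y_n/3}\to\infty$. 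Two small points you may wish to make explicit: (i) the hypothesis forces $y_n\to\infty$ (since $|I_n|\leqslant y_n$ and $y_ne^{y_n/3}\to\infty$ implies $y_n\to\infty$), which is needed for the error term at $y_n$ to be controlled; and (ii) when $y_n'$ is small (below, say, $\eta_0(\Gamma)/2$) one should replace $y_n'$ by a fixed positive constant before invoking Theorem~\ref{SarnakTheorem}, which changes neither $\pi_{\Gamma}(y_n')=0$ nor $\pi_{\Gamma}(y_n',f)=0$ and keeps the combined error at $\mathrm{O}_{\Gamma,f}(e^{5y_n/3}/y_n)$, rather than formally applying the theorem at a point where $e^{5y/3}/y$ blows up.
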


Now we transition from a smooth function on the holonomy to a sharp cutoff, while still using a smooth function on the length. We do so using the smooth cutoff function $f_{J,\eta'}$ defined in \eqref{f-I-eps} and then optimizing the choice of $\eta'$.

\begin{prop}
\label{SharpHolSmoothLength}
Let $\Gamma<\PSL_2 \mathbb{C}$ be a discrete, co-compact, torsion-free subgroup. Then, for every interval $J= [\theta, \theta'] \subseteq \mathbb{R} / 2 \pi \mathbb{Z}$ and every $y>0$ and $0<\eta\leqslant\eta_0$,
\begin{align*}
\pi_{\Gamma}(g_{y,\eta},J):=\sump_{\hol(\gamma) \in J} g_{y,\eta}(\ell(\gamma))
&=\frac{|J|}{2\pi}\int_2^{\infty}g_{y,\eta}(u)\,\dd\varpi_{\Gamma}(u)+\mathrm{O}_{\Gamma,\eta_0}\Big(\frac{e^{5y/3}}{y^{2/3}}+\frac{e^y}{\eta^2}\Big)\\
&= \frac{|J|}{2\pi} \pi_{\Gamma}(g_{y,\eta}) +\mathrm{O}_{\Gamma,\eta_0}\Big(\frac{e^{5y/3}}{y^{2/3}}+\frac{e^y}{\eta^2}\Big).
\end{align*}
\end{prop}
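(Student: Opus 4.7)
My plan is to derive Proposition~\ref{SharpHolSmoothLength} from two applications of Proposition~\ref{SmoothSmooth} with a free auxiliary width parameter $\eta'>0$ to be optimized at the end, much in the spirit of how Proposition~\ref{trace-estimates-sharp} was deduced from Lemmata~\ref{trace-estimates-lemma} and~\ref{geodesics-upper-bound}. The sharp holonomy cutoff $\chi_I$ will be replaced by its mollification $f_{I,\eta'}$, and the resulting discrepancy will be controlled by a smooth majorant of the characteristic function of the boundary strip.

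Concretely, I will first apply Proposition~\ref{SmoothSmooth} directly to $f=f_{I,\eta'}$, noting that $\hat f_{I,\eta'}(0)=|I|$, to obtain
\[ \pi_{\Gamma}(g_{y,\eta},f_{I,\eta'}) = \frac{|I|}{2\pi}\int_2^{\infty} g_{y,\eta}(u)\,\dd\varpi_{\Gamma}(u) + \mathrm{O}_{\Gamma,\eta_0}\Big(\frac{e^y}{y\eta^2}\log^{\ast}\frac1{\eta'} + \frac{e^y}{\eta'^2}\Big). \]
Since $g_{y,\eta}\geqslant 0$ and $|\chi_I - f_{I,\eta'}|\leqslant \chi_{I^+\setminus I^-}$, where $I^{\pm}$ denote the $\eta'$-fattening and -shrinking of $I$ along $\partial I$ (so that $I^+\setminus I^-$ is a union of two intervals of length $2\eta'$ about the endpoints of $I$), I have $|\pi_{\Gamma}(g_{y,\eta},\chi_I) - \pi_{\Gamma}(g_{y,\eta},f_{I,\eta'})| \leqslant \pi_{\Gamma}(g_{y,\eta},f^+)$ for any smooth non-negative majorant $f^+$ of $\chi_{I^+\setminus I^-}$. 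Taking $f^+=f_{J_1^{\ast},\eta'}+f_{J_2^{\ast},\eta'}$ with $J_i^{\ast}$ intervals of length $4\eta'$ centered at the endpoints of $I$ (so that $f^+\geqslant 1$ on $I^+\setminus I^-$ by construction of \eqref{f-I-eps}), a second application of Proposition~\ref{SmoothSmooth}, together with $\int_0^{2\pi} f^+\,\dd\theta \asymp \eta'$ and $\int_2^{\infty} g_{y,\eta}\,\dd\varpi_{\Gamma} \asymp_{\Gamma} e^{2y}/y$, gives $\pi_{\Gamma}(g_{y,\eta},f^+) \ll_{\Gamma,\eta_0} \eta' e^{2y}/y + (e^y/(y\eta^2))\log^{\ast}(1/\eta') + e^y/\eta'^2$.

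Combining the two estimates, the discrepancy from the main term $\frac{|I|}{2\pi}\int_2^{\infty}g_{y,\eta}\,\dd\varpi_{\Gamma}$ is $\mathrm{O}_{\Gamma,\eta_0}(\eta' e^{2y}/y + (e^y/(y\eta^2))\log^{\ast}(1/\eta') + e^y/\eta'^2)$. The final step is to optimize over $\eta'$: balancing $\eta' e^{2y}/y$ against $e^y/\eta'^2$ leads to $\eta' \asymp (ye^{-y})^{1/3}$, at which choice each of those two terms becomes $\asymp e^{5y/3}/y^{2/3}$ and $\log^{\ast}(1/\eta') \asymp y$, so the middle term collapses to $e^y/\eta^2$. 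This yields the first form of the proposition. The second form follows from the identity $\int_2^{\infty}g_{y,\eta}\,\dd\varpi_{\Gamma} = \pi_{\Gamma}(g_{y,\eta}) + \mathrm{O}_{\Gamma,\eta_0}(e^y/(y\eta^2))$, which is Proposition~\ref{holonomy-sums-prop} with $n=0$, the correction being absorbed into the existing error.

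The main obstacle is the boundary bound: its $\eta' e^{2y}/y$ contribution reflects the leading (and uncancellable) behavior of the primitive length spectrum inside an $\eta'$-neighborhood of $\partial I$, and its balance with $e^y/\eta'^2$ forces the exponent $5/3$, exactly as in the classical Prime Geodesic Theorem and in the length-aspect passage of Proposition~\ref{trace-estimates-sharp} --- a direct manifestation of the length--holonomy symmetry highlighted in Remark~\ref{symmetry-remark}.
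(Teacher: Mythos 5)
Your proposal is correct and reaches the stated error term, but it takes a slightly different technical route from the paper. The paper controls the boundary discrepancy $|\pi_{\Gamma}(g_{y,\eta},I)-\pi_{\Gamma}(g_{y,\eta},f_{I,\eta'})|$ via Lemma~\ref{passage-to-sharp-holonomy} (which bounds $\sum\ell(\gamma_0)w(\gamma)$ over a holonomy strip directly from the trace formula via the $T_m^{\cos}$-estimates), and only then removes the weights by multiplying by $e^{y+\eta}/y$. You instead reapply Proposition~\ref{SmoothSmooth} to a smooth majorant $f^+$ of $|\chi_I-f_{I,\eta'}|$, which bounds the unweighted boundary count directly through the holonomy character sums $K_n[g_{y,\eta}]$. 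Both paths funnel through the same underlying mechanism (Fourier-expanding the holonomy weight and estimating via the trace formula / Proposition~\ref{Weyl}), so the identical error structure $\eta' e^{2y}/y + e^y/\eta'^2 + e^y\log^{\ast}(1/\eta')/(y\eta^2)$ and the same optimization $\eta'\asymp y^{1/3}e^{-y/3}$ are no coincidence. Your route is slightly more economical in that it dispenses with Lemma~\ref{passage-to-sharp-holonomy} as a separate ingredient, while the paper's formulation makes the length--holonomy symmetry of Remark~\ref{symmetry-remark} (and the pair of boundary lemmata \ref{geodesics-upper-bound}/\ref{passage-to-sharp-holonomy}) structurally explicit.

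Two small points worth tightening if you wrote this up: (i) for $y=\mathrm{O}(1)$ the relation $\int_2^{\infty}g_{y,\eta}\,\dd\varpi_{\Gamma}\asymp_{\Gamma}e^{2y}/y$ is not literally correct (the left side can vanish), so one should note, as the paper does, that the claim is trivial for bounded $y$ and restrict attention to $y\gg_{\Gamma}1$; (ii) Proposition~\ref{holonomy-sums-prop} with $n=0$ gives an error $\mathrm{O}_{\Gamma,\eta_0}(e^y/(y\eta^2)+e^y)$ rather than just $e^y/(y\eta^2)$ --- the extra $e^y$ is harmless since $e^y\ll e^{5y/3}/y^{2/3}$, but it should be acknowledged.
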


\begin{proof}
Our starting point is Proposition~\ref{SmoothSmooth}, with a parameter $0<\eta'\leqslant 2\pi$ to be suitably chosen soon, which yields the estimate
\[ \pi_{\Gamma}(g_{y,\eta},f_{J,\eta'})
=\frac{|J|}{2\pi}\int_2^{\infty}g_{y,\eta}(u)\dd\varpi_{\Gamma}(u)+\mathrm{O}_{\Gamma,\eta_0}\Big(\frac{e^y}{y\eta^2}\log^{\ast}\frac1{\eta'}+\frac{e^y}{\eta'{}^2}\Big). \]
According to definition \eqref{f-I-eps}, the smooth cutoff function $f_{J,\eta'}$ for $J=[\theta,\theta']$ agrees with $\chi_J$ outside the set $([\theta-\eta',\theta+\eta']\cup[\theta'-\eta',\theta'+\eta'])+2\pi\mathbb{Z}$, on which $|f_{J,\eta'}-\chi_J|=\mathrm{O}(1)$. We also recall that, according to definition \eqref{smooth-cutoff-functions}, the cutoff function $g_{y,\eta}$ is supported on $[-y-\eta,y+\eta]$ and satisfies $|g_{y,\eta}|\leqslant 1$. Therefore,
\begin{align*}
\big|\pi_{\Gamma}(g_{y,\eta},J)-\pi_{\Gamma}(g_{y,\eta},f_{J,\eta'})\big|
&=\bigg|\sump_{[\gamma]}(\chi_J-f_{J,\eta'})(\hol(\gamma))g_{y,\eta}(\ell(\gamma))\bigg|\\
&\ll\sump_{\substack{\ell(\gamma)\leqslant y+\eta\\ \theta-\eta'\leqslant\hol(\gamma)\leqslant\theta+\eta'}}1+\sump_{\substack{\ell(\gamma)\leqslant y+\eta\\ \theta'-\eta'\leqslant\hol(\gamma)\leqslant\theta'+\eta'}}1.
\end{align*}
The latter terms are of the form ready to be estimated using Lemma~\ref{passage-to-sharp-holonomy}, which for every $\theta\in\mathbb{R}/2\pi\mathbb{Z}$ yields
\begin{equation}
\label{from-sharp-hol-1}
\sump_{\substack{\ell(\gamma)\leqslant y+\eta\\ \theta-\eta'\leqslant\hol(\gamma)\leqslant\theta+\eta'}}1
\ll
\frac{e^{y+\eta}}y\sum_{\substack{\ell(\gamma)\leqslant y+\eta\\ \theta-\eta'\leqslant\hol(\gamma)\leqslant\theta+\eta'}}\ell(\gamma_0)w(\gamma)\ll_{\Gamma,\eta_0}\eta'\frac{e^{2y}}y+\frac{e^y}{\eta'{}^2}.
\end{equation}

Using Proposition~\ref{SmoothSmooth} and the input from Lemma~\ref{passage-to-sharp-holonomy} in the form \eqref{from-sharp-hol-1}, we have that
\begin{align}
\label{difference}
&\bigg|\pi_{\Gamma}(g_{y,\eta},J)-\frac{|J|}{2\pi}\int_2^yg_{y,\eta}(u)\,\dd\varpi_{\Gamma}(u)\bigg|\\
\nonumber
&\qquad\leqslant \big|\pi_{\Gamma}(g_{y,\eta},J)-\pi_{\Gamma}(g_{y,\eta},f_{J,\eta'})\big|
+\bigg|\pi_{\Gamma}(g_{y,\eta},f_{J,\eta'})-\frac{|J|}{2\pi}\int_2^{\infty}g_{y,\eta}(u)\dd\varpi_{\Gamma}(u)\bigg|\\
\nonumber
&\qquad\ll_{\Gamma,\eta_0}\eta'\frac{e^{2y}}y+\frac{e^y}{\eta'{}^2}+\frac{e^y}{y\eta^2}\log^{\ast}\frac1{\eta'}.
\end{align}
Taking the admissible choice $\eta'=\min(y^{1/3}e^{-y/3},2\pi)$ completes the proof of the first claim, since for $y=\mathrm{O}(1)$ the left-hand side is trivially $\mathrm{O}_{\Gamma,\eta_0}(1)$.

The second claim follows immediately, keeping in mind that by Proposition~\ref{holonomy-sums-prop}
\[  \pi_{\Gamma}(g_{y,\eta})=K_0[g_{y,\eta}]=\int_2^{\infty}g_{y,\eta}(u)\,\dd\varpi_{\Gamma}(u)+\mathrm{O}_{\Gamma,\eta_0}\Big(\frac{e^y}{y\eta^2}+e^y\Big), \]
or alternatively by adapting the use of Proposition~\ref{SmoothSmooth} in \eqref{difference}.
\end{proof}

Our main result, the following Theorem~\ref{sharpsharp}, features a sharp cutoff in both length and holonomy. Such a theorem can be proved by passing from the remaining smooth to sharp cutoff in either Theorem~\ref{SarnakTheorem} or Proposition~\ref{SharpHolSmoothLength}; we choose the former route.

\begin{theorem}[Ambient prime geodesic theorem]
\label{sharpsharp}
Let $\Gamma<\PSL_2\mathbb{C}$ be a discrete, co-compact, torsion-free subgroup. Then, for every $y>0$ and every interval $J\subseteq\mathbb{R}/2\pi\mathbb{Z}$,

\begin{align*}
\pi_{\Gamma}(y,J)
&:=\big|\big\{[\gamma]^P:\ell(\gamma)\leqslant y,\,\hol(\gamma)\in J\big\}\big|\\
&=\iint_{[2,y]\times J}\dd\varpi_{\Gamma}(u)\frac{\dd\theta}{2\pi}+\mathrm{O}_{\Gamma}\big(e^{5y/3}\big)
=\frac{|J|}{2\pi}\pi_{\Gamma}(y)+\mathrm{O}_{\Gamma}\big(e^{5y/3}\big),
\end{align*}
where $\varpi_{\Gamma}$ is as in \eqref{ei-varpi}.
\end{theorem}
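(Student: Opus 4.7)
\textbf{Proof proposal for Theorem~\ref{sharpsharp}.} The plan is to pass from the sharp-in-length, smooth-in-holonomy count of Theorem~\ref{SarnakTheorem} to a sharp cutoff in holonomy, in exact analogy with the passage from Proposition~\ref{SmoothSmooth} to Proposition~\ref{SharpHolSmoothLength}, using Lemma~\ref{passage-to-sharp-holonomy} to control the boundary strip. First I would dispose of the trivial range $y=\mathrm{O}_{\Gamma}(1)$ (where discreteness of $\Gamma$ gives the claim vacuously) and assume $y\geqslant\eta_0(\Gamma)$.

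Let $0<\eta'\leqslant 2\pi$ be a parameter to be chosen, and let $f_{I,\eta'}$ be the smooth holonomy cutoff defined in \eqref{f-I-eps}. Since $|f_{I,\eta'}-\chi_I|=\mathrm{O}(1)$ is supported in two boundary strips of width $2\eta'$ around the endpoints $\theta,\theta'$ of $I=[\theta,\theta']$, writing $\pi_{\Gamma}(y,I)-\pi_{\Gamma}(y,f_{I,\eta'})=\sum^P_{[\gamma]}(\chi_I-f_{I,\eta'})(\hol(\gamma))\chi_{[0,y]}(\ell(\gamma))$ reduces the difference to an unweighted count of primitive classes with $\ell(\gamma)\leqslant y$ and $\hol(\gamma)$ within $\eta'$ of $\theta$ or $\theta'$. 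To convert Lemma~\ref{passage-to-sharp-holonomy}, which weights each class by $\ell(\gamma_0)w(\gamma)\asymp_{\Gamma}\ell(\gamma)e^{-\ell(\gamma)}$ (using primitivity), into an unweighted estimate, I would divide by the minimum of $t\mapsto te^{-t}$ on $[\eta_0(\Gamma),y]$, which for $y$ large is $\asymp ye^{-y}$, obtaining
\[
\sump_{\substack{\ell(\gamma)\leqslant y\\ |\hol(\gamma)-\vartheta|\leqslant\eta'}}\!\!1\ \ll_{\Gamma}\ \frac{e^y}{y}\sum_{\substack{\ell(\gamma)\leqslant y\\ |\hol(\gamma)-\vartheta|\leqslant\eta'}}\!\!\ell(\gamma_0)w(\gamma)\ \ll_{\Gamma}\ \eta'\frac{e^{2y}}{y}+\frac{e^y}{\eta'^2}
\]
for any $\vartheta\in\mathbb{R}/2\pi\mathbb{Z}$, exactly as in the derivation of \eqref{from-sharp-hol-1}.

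Combining this estimate of the boundary terms with the application of Theorem~\ref{SarnakTheorem} to $f_{I,\eta'}$ (which bounds the deviation from the expected main term $(|I|/2\pi)\pi_{\Gamma}(y)$ by $\mathrm{O}_{\Gamma}((e^{5y/3}/y)\log^{\ast}(1/\eta')+e^y/\eta'^2)$) yields
\[
\Bigl|\pi_{\Gamma}(y,I)-\frac{|I|}{2\pi}\pi_{\Gamma}(y)\Bigr|\ \ll_{\Gamma}\ \eta'\frac{e^{2y}}{y}+\frac{e^y}{\eta'^2}+\frac{e^{5y/3}}{y}\log^{\ast}\frac{1}{\eta'}.
\]
The balance between the first and third terms dictates $\eta'\asymp ye^{-y/3}$; with the admissible choice $\eta'=\min(ye^{-y/3},2\pi)$ all three error terms are $\mathrm{O}_{\Gamma}(e^{5y/3})$ (using $\log^{\ast}(1/\eta')\ll y$ and $e^y/\eta'^2\ll e^{5y/3}/y^2$). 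This gives the claimed bound, after which the equivalent integral form of the main term follows from rewriting $(|I|/2\pi)\pi_{\Gamma}(y)=\iint_{[2,y]\times I}\dd\varpi_{\Gamma}(u)\,\dd\theta/(2\pi)+\mathrm{O}_{\Gamma}(e^{5y/3})$ via the Prime Geodesic Theorem \eqref{PGT-for-record}.

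The main obstacle is really already resolved by Lemma~\ref{passage-to-sharp-holonomy} and Theorem~\ref{SarnakTheorem}; the only delicate point is the symmetric optimization in $\eta'$, which must simultaneously absorb the boundary loss from the sharp holonomy cutoff and the spectral loss from the sharp length cutoff. The fact that both sources contribute at the same scale $e^{5y/3}$ when $\eta'\asymp ye^{-y/3}$ reflects the structural symmetry between length and holonomy aspects emphasized in Remark~\ref{symmetry-remark}, and is the reason the final error is a pure exponential $e^{5y/3}$ without any logarithmic loss.
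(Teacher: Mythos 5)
Your argument matches the paper's proof of Theorem~\ref{sharpsharp} essentially line by line: same starting point (Theorem~\ref{SarnakTheorem} applied to $f_{I,\eta'}$), same use of Lemma~\ref{passage-to-sharp-holonomy} to control the boundary strips (exactly as in \eqref{from-sharp-hol}), same combined error $\eta'e^{2y}/y+e^y/\eta'^2+(e^{5y/3}/y)\log^{\ast}(1/\eta')$, and the same final appeal to \eqref{PGT-for-record} to switch between the two forms of the main term. The only cosmetic difference is your choice $\eta'=\min(ye^{-y/3},2\pi)$ versus the paper's $\eta'=\min(e^{-y/3},2\pi)$: you balance the first term against the third while the paper balances the second against the third, but both yield the identical bound $\mathrm{O}_{\Gamma}(e^{5y/3})$, so this is immaterial.
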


\begin{proof}
We begin with Theorem~\ref{SarnakTheorem}, with a parameter $0<\eta'\leqslant 2\pi$ to be suitably chosen soon. Recall from \eqref{f-I-eps} that the smooth cutoff function $f_{J,\eta'}$ for $J=[\theta,\theta']$ agrees with $\chi_J$ outside the set $([\theta-\eta',\theta+\eta']\cup[\theta'-\eta',\theta'+\eta'])+2\pi\mathbb{Z}$, on which $|f_{J,\eta'}-\chi_J|=\mathrm{O}(1)$. Arguing as in the proof of Proposition~\ref{SharpHolSmoothLength}, we find using Lemma~\ref{passage-to-sharp-holonomy} that
\begin{align}
\nonumber \big|\pi_{\Gamma}(y,J)-\pi_{\Gamma}(y,f_{J,\eta'})\big|
&\ll\sump_{\substack{\ell(\gamma)\leqslant y\\ \theta-\eta'\leqslant\hol(\gamma)\leqslant\theta+\eta'}}1+\sump_{\substack{\ell(\gamma)\leqslant y\\ \theta'-\eta'\leqslant\hol(\gamma)\leqslant\theta'+\eta'}}1\\
\label{from-sharp-hol}
&\ll\frac{e^y}y\sum_{\substack{\ell(\gamma)\leqslant y\\ \theta-\eta'\leqslant\hol(\gamma)\leqslant\theta+\eta'\\\text{or }\theta'-\eta'\leqslant\hol(\gamma)\leqslant\theta'+\eta'}}\ell(\gamma_0)w(\gamma)\ll_{\Gamma}\eta'\frac{e^{2y}}y+\frac{e^y}{\eta'{}^2}.
\end{align}

Using Theorem~\ref{SarnakTheorem} and the input from Lemma~\ref{passage-to-sharp-holonomy} in the form \eqref{from-sharp-hol}, we have that
\begin{align*}
&\bigg|\pi_{\Gamma}(y,J)-\iint_{[2,y]\times J}\dd\varpi_{\Gamma}(u)\frac{\dd\theta}{2\pi}\bigg|\\
&\qquad\leqslant \big|\pi_{\Gamma}(y,J)-\pi_{\Gamma}(y,f_{J,\eta'})\big|
+\bigg|\pi_{\Gamma}(y,f_{J,\eta'})-\frac{|J|}{2\pi}\int_{[2,y]}\dd\varpi_{\Gamma}(u)\bigg|\\
&\qquad\ll_{\Gamma}\eta'\frac{e^{2y}}y+\frac{e^y}{\eta'{}^2}+\frac{e^{5y/3}}y\log^{\ast}\frac1{\eta'}.
\end{align*}
Taking the admissible choice $\eta'=\min(e^{-y/3},2\pi)$
 completes the proof, noting that for $y=\mathrm{O}(1)$ the left-hand side is $\mathrm{O}_{\Gamma}(1)$, and keeping in mind \eqref{PGT-for-record}.
\end{proof}

\begin{remark}
Observe that Proposition \ref{SharpHolSmoothLength}, which features a sharp cutoff in holonomy and a smooth cutoff in length, parallels Theorem \ref{SarnakTheorem}, which has a smooth cutoff in holonomy and a sharp cutoff in length. This strengthens the perspective that length and holonomy should be counted as a pair. Compared to the smooth cutoff in Proposition~\ref{SmoothSmooth}, the smooth to sharp transition in either direction yields a significant contribution to the error term; however, the further passage to a sharp cutoff in both directions in Theorem~\ref{sharpsharp} leads to minimal or no increase in the error term.

Conceptually (and tracing through the proofs confirms this rigorously), this is so because using smooth functions \eqref{smooth-cutoff-functions} and \eqref{f-I-eps} to approximate sharp cutoffs essentially mimics what would be the use of Theorem~\ref{LL} with an (ineligible) sharp-cutoff test function $\chi_R(t)$ for a target rectangle $R\subseteq\mathbb{R}\times(\mathbb{R}/2\pi\mathbb{Z})$ of length and holonomy. The effect of a smooth cutoff of wall length $\eta\ll 1$ in one direction is that the essential support (in the sense of Schwartz decay) of the Fourier transform extends up to $\asymp 1/\eta$ in the dual spectral direction. Since typically  $\widehat{\chi_R}(\chi_{\nu,p})\asymp_R ((1+|\nu|)(1+|p|))^{-1}$, the contribution of the principal series terms is guided roughly (up to logarithmic factors) by the supremum of the Plancherel measure \eqref{plancherel} over the said spectral support, which is in turn symmetric in spectral parameters $\nu$ and $p$ and does not increase if the support extends in both parameters rather than just one.
\end{remark}

Already as an immediate consequence of Theorem~\ref{sharpsharp}, we obtain the following ambient short-range count for primitive geodesics on $M$, with the length and holonomy simultaneously restricted to short intervals. We will further improve upon this asymptotic in Theorem~\ref{short-range-ambient-theorem}.

\begin{corr}
\label{ambient-short-range}
Let $\Gamma<\PSL_2\mathbb{C}$ be a discrete, co-compact, torsion-free subgroup. Then, for any intervals $I=[y',y]$ ($0\leqslant y'\leqslant y$) and $J\subseteq\mathbb{R}/2\pi\mathbb{Z}$,
\begin{align*}
\pi_{\Gamma}(I,J)
&:=\big|\big\{[\gamma]^P:(\ell(\gamma),\hol(\gamma))\in  I \times J \big\}\big|\\
&= \iint_{I\times J}\dd\varpi_{\Gamma}(u)\frac{\dd\theta}{2\pi}+\mathrm{O}_{\Gamma}\big(e^{5y/3}\big).
\end{align*}
\end{corr}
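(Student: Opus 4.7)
The plan is to obtain Corollary~\ref{ambient-short-range} as an immediate consequence of Theorem~\ref{sharpsharp} by a subtraction argument applied at the two endpoints of $I_1 = [y', y]$. For any $0 \leqslant y' \leqslant y$, I would decompose the count over the closed interval as
\[
\pi_\Gamma([y',y], I_2) = \pi_\Gamma(y, I_2) - \pi_\Gamma(y', I_2) + D,
\]
where $D := |\{[\gamma]^P : \ell(\gamma) = y',\, \hol(\gamma) \in I_2\}|$ accounts for the fact that the sharp-cutoff quantity $\pi_\Gamma(y', I_2)$ of Theorem~\ref{sharpsharp} uses $\ell(\gamma) \leqslant y'$ whereas we need to include $\ell(\gamma) = y'$ on the right side of $[y', y]$. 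Since the length spectrum is locally finite, I would apply Theorem~\ref{sharpsharp} to the pair of cutoffs $y'$ and $y' - \epsilon$ for $\epsilon > 0$ smaller than the gap in the length spectrum just below $y'$; the geometric side then telescopes to exactly $D$, while the main-term integral $\iint_{(y'-\epsilon, y'] \times I_2}\dd\varpi_\Gamma\,\dd\theta/(2\pi) = O(\epsilon e^{2y'})$ may be made arbitrarily small. This at once gives $D \ll_\Gamma e^{5y'/3}$.

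With $D$ controlled, the main computation is to apply Theorem~\ref{sharpsharp} to $\pi_\Gamma(y, I_2)$ and $\pi_\Gamma(y', I_2)$ and subtract:
\[
\pi_\Gamma([y',y], I_2) = \iint_{[2,y] \times I_2}\dd\varpi_\Gamma(u)\,\frac{\dd\theta}{2\pi} - \iint_{[2,y'] \times I_2}\dd\varpi_\Gamma(u)\,\frac{\dd\theta}{2\pi} + O_\Gamma\bigl(e^{5y/3}\bigr) + O_\Gamma\bigl(e^{5y'/3}\bigr) + D.
\]
Since $y' \leqslant y$, the three error contributions combine into a single $O_\Gamma(e^{5y/3})$, and by linearity of the main term the two integrals combine to $\iint_{[y', y] \times I_2}\dd\varpi_\Gamma(u)\,\dd\theta/(2\pi)$, which is exactly the predicted main term (when $y' \geqslant 2$).

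The only residual technical point is the boundary case $y' < 2$: by definition \eqref{ei-varpi} the density $\dd\varpi_\Gamma/\dd u$ is bounded on $[0, 2]$, so the piece $\iint_{[y', 2] \times I_2}\dd\varpi_\Gamma\,\dd\theta/(2\pi)$ is $O_\Gamma(1)$, and similarly $\pi_\Gamma(y', I_2) = O_\Gamma(1)$ directly since there are only finitely many primitive classes below length~$2$. Both these adjustments are absorbed into $O_\Gamma(e^{5y/3})$. I do not expect any substantive obstacle: once the sharp-cutoff Theorem~\ref{sharpsharp} is in hand, Corollary~\ref{ambient-short-range} is essentially bookkeeping, the only mild issue being the discreteness argument used to bound the jump $D$, which is handled internally by Theorem~\ref{sharpsharp} itself.
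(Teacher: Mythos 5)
Your proposal is correct and follows exactly the route the paper has in mind: the Remark following the Corollary explicitly describes it as ``essentially subtracting two instances of Theorem~\ref{sharpsharp},'' and you supply precisely that subtraction, with the small-$y'$ regime and the additive boundary term handled as absorbable into $\mathrm{O}_{\Gamma}(e^{5y/3})$. The one place you labor more than necessary is the treatment of the endpoint contribution $D = |\{[\gamma]^P : \ell(\gamma) = y',\, \hol(\gamma)\in I_2\}|$: your $\epsilon$-telescoping argument is fine, but a quicker observation is that $D$ is bounded by the left-continuity defect of $\pi_{\Gamma}(\cdot)$ at $y'$, which by the already-established Prime Geodesic Theorem \eqref{PGT-for-record} (the main term being a continuous function of $y'$) is automatically $\mathrm{O}_{\Gamma}(e^{5y'/3}/y')$; alternatively, one can simply read the interval $I_1 = [y',y]$ as half-open at the left endpoint, in which case $\pi_{\Gamma}(I_1,I_2) = \pi_{\Gamma}(y,I_2) - \pi_{\Gamma}(y',I_2)$ identically and no correction is needed. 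Either way, this is bookkeeping at the level of measure-zero sets, consistent with the paper's decision to state the Corollary as immediate.
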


\begin{remark}\label{shrinking-intervals-remark}
Corollary~\ref{ambient-short-range} provides an exponent-saving asymptotic for $\pi_{\Gamma}(I,J)$  as long as $|I\times J|\gg e^{(-1/3+\delta)y}$ for some $\delta>0$. We emphasize that the lengths $|I|$ and $|J|$ may be short \emph{independently of each other} in any regime satisfying this combined lower bound.

When Corollary~\ref{ambient-short-range} gives an asymptotic, this may be rewritten (with obvious shorthand notation) as $\pi_{\Gamma}(I,J)\sim_{\Gamma} \pi_{\Gamma}(I)\cdot |J|/(2\pi)\sim_{\Gamma} \Ei_{\Gamma}(I)\cdot |J|/(2\pi)$, which can in turn be restated as an effective equidistribution statement for either the lengths or the holonomies in shrinking intervals of lengths, holonomies, or both.

\end{remark}

\subsection{Ambient Prime Geodesic Theorems for shrinking intervals}
\label{APGT-shrinking-subsec}
Results of \S \ref{ambient-pgt-sharp-section}
establish structural parallels between the length and holonomy aspects in geodesic counting and indicate that the ``ambiental'' joint count of primitive classes $[\gamma]\in [\Gamma]$ according to the pair $(\ell(\gamma),\hol(\gamma))$ is perhaps the most natural counting object.
 In this section, we demonstrate how consistently executing ``ambiental'' passage from smooth to sharp count leads to ambient prime geodesic theorems which improve upon the results of \S\ref{ambient-pgt-sharp-section} when both intervals of length and holonomy are shrinking. To emphasize the symmetry and analogy to \S\ref{ambient-pgt-sharp-section}, in Proposition \ref{sharp-length-intervals-prop} we prove counts that are smooth (but possibly steep) in one of the parameters and sharp in the other, and in Theorem~\ref{short-range-ambient-theorem} our headline count that is sharp in both aspects.

For the smooth length cutoff, we use
\begin{equation}
\label{smooth-interval-cutoff-def}
g_{I,\eta}=\psi_{\eta}\star(\chi_{I} + \chi_{-I}),
\end{equation}
where $\psi_\eta$ is defined as in (\ref{psi-eta-def}) and $\chi_I$ and $\chi_{-I}$ are  characteristic functions, to sample the geodesics with length in an interval $I \subseteq [0,y]$. For the smooth holonomy cutoff on an interval $J \subseteq \mathbb{R}/2 \pi \mathbb{Z}$, we use the function $f_{J, \eta'}$ defined in (\ref{f-I-eps}), which is the periodization of $\psi_\eta \star \chi_J$. Further, for $I=[a,b]$ and $\eta>0$, we will write $I^{-}_{\eta}=[a-\eta,a+\eta]$, $I^{+}_{\eta}=[b-\eta,b+\eta]$, and $I_{\eta}=I^{-}_{\eta}\cup I\cup I^{+}_{\eta}=[a-\eta,b+\eta]$.

To begin, we modify a consequence of Proposition \ref{SmoothSmooth} to obtain a smooth count in both the length and holonomy aspects.

\begin{lemma}\label{smooth-smooth-intervals}
Let $\Gamma<\PSL_2\mathbb{C}$ be a discrete, co-compact, torsion-free subgroup. Then, for every two intervals $I =[y', y]$ ($0\leqslant y'< y$) and $J \subseteq \mathbb{R}/2\pi\mathbb{Z}$, and every $0< \eta\leqslant\eta_0$ and $0< \eta' \leqslant 2\pi$,
\[ \pi_{\Gamma}(g_{I,\eta},f_{J,\eta'})
=\frac{|J|}{2\pi}\int_2^{\infty}g_{I,\eta}(u)\,\dd\varpi_{\Gamma}(u)+\mathrm{O}_{\Gamma,\eta_0}\Big(\frac{e^y}{y\eta^2}\log^{\ast}\frac1{\eta'}+\frac{e^y}{\eta'{}^2}\Big), \]
where $g_{I,\eta}$ and $f_{J, \eta'}$ are smooth cutoff functions defined in \eqref{smooth-interval-cutoff-def} and \eqref{f-I-eps}, and $\varpi_\Gamma$ is the density \eqref{ei-varpi}.
\end{lemma}

\begin{proof} The result follows immediately from subtracting two instances of Proposition \ref{SmoothSmooth}, noting that $g_{I, \eta} = g_{y, \eta} - g_{y', \eta}$.
\end{proof}

The following corollary, which may be thought of as the ambient analogue of Lemmata~\ref{geodesics-upper-bound} and \ref{passage-to-sharp-holonomy}, will be used to give an upper bound on the boundary terms when transitioning from a smooth to a sharp cutoff. In our typical application of Corollary~\ref{short-range-upper-bound}, at least one of the intervals $I$ and $J$ will be very short, and we pick $\eta\asymp\min(|I|,\eta_0)$ or $\eta'\asymp |J|$.

\begin{corr}
\label{short-range-upper-bound}
Let $\Gamma<\PSL_2\mathbb{C}$ be a discrete, co-compact, torsion-free subgroup. Then, for every two intervals $I =[y', y]$ ($0\leqslant y'< y$) and $J \subseteq \mathbb{R}/2\pi\mathbb{Z}$, we have
\[\pi_\Gamma(I, J) \ll_{\Gamma, \eta_0} (|I| + \eta)(|J|+\eta') \frac{e^{2y}}{y} +\frac{e^y}{y\eta^2}\log^{\ast}\frac1{\eta'}+\frac{e^y}{\eta'{}^2} \]
for every choice of $0< \eta\leqslant\eta_0$ and $0< \eta'\leqslant 2\pi$.
\end{corr}

\begin{proof}
It is clear from definitions \eqref{f-I-eps} and \eqref{smooth-interval-cutoff-def} that
$\chi_{I\cup(-I)}
\leqslant g_{I_{\eta},\eta}$ and $\chi_{J+2\pi\mathbb{Z}}\leqslant f_{J_{\eta'},\eta'}$.
Combining this observation with Lemma~\ref{smooth-smooth-intervals}, and keeping in mind that $g_{I_{\eta},\eta}$ is supported inside $I_{2\eta}\cup (-I_{2\eta})$, we obtain
 \begin{align*}
	\pi_\Gamma(I, J) &
\leqslant\pi_{\Gamma}(g_{I_{\eta},\eta},f_{J_{\eta'},\eta'})\\
	&\ll_{\Gamma,\eta_0} \frac{|J_{\eta'}|}{2\pi}\int_2^{\infty}g_{I_\eta
		,\eta}(u)\,\dd\varpi_{\Gamma}(u)+\frac{e^y}{y\eta^2}\log^{\ast}\frac1{\eta'}+\frac{e^y}{\eta'{}^2}\\
	&\ll_{\Gamma, \eta_0}
	(|I| + \eta)(|J|+\eta') \frac{e^{2y}}{y} +\frac{e^y}{y\eta^2}\log^{\ast}\frac1{\eta'}+\frac{e^y}{\eta'{}^2}.
\qedhere
\end{align*}
\end{proof}

In the following proposition, we pass from the smooth result of Lemma \ref{smooth-smooth-intervals} to asymptotic counts of geodesics with a sharp cutoff in one of the length and holonomy parameters, retaining a smooth cutoff in the other. We pass to the sharp count using Corollary~\ref{short-range-upper-bound}, which
involves finding an upper bound in the two rectangular regions of ambiguity with at least one short side length, and thus improves  on the error term compared to Theorem~\ref{SarnakTheorem} and Proposition~\ref{SharpHolSmoothLength} (see Remark~\ref{concluding-remark} below).

\begin{prop}
\label{sharp-length-intervals-prop}
Let $\Gamma<\PSL_2\mathbb{C}$ be a discrete, co-compact, torsion-free subgroup, and let $\varpi_\Gamma$ be the density \eqref{ei-varpi}. Then, for any intervals $I=[y',y]$ ($0\leqslant y'< y$) and $J \subseteq\mathbb{R}/2\pi\mathbb{Z}$:
\begin{itemize}
\item[(a)]
For every $0< \eta'\leqslant 2\pi$ and for $f_{J, \eta'}$ as in \eqref{f-I-eps},
\begin{align*}
\pi_\Gamma(I, f_{J, \eta'})
&:=\sump_{\ell(\gamma) \in I} f_{J, \eta'}(\hol(\gamma))\\
&= \frac{|J|}{2 \pi}
\int_I\,\dd \varpi_\Gamma(u)
+ \mathrm{O}_{\Gamma}\Big((|J| + \eta')^{2/3} \frac{e^{5y/3}}{y}\Big(\log^{\ast}\frac{1}{\eta'}\Big)^{1/3} + \frac{e^y}{\eta'{}^2} \Big).
\end{align*}
\item[(b)]
For every $0 < \eta\leqslant\eta_0$ and for $g_{I, \eta}$ as in \eqref{smooth-interval-cutoff-def},
\begin{align*}
\pi_\Gamma(g_{I, \eta}, J) &:= \sump_{\hol(\gamma) \in J} g_{I,\eta}(\ell(\gamma))\\
&= \frac{|J|}{2 \pi}  \int_{2}^\infty g_{I, \eta}(u)\,\dd \varpi_\Gamma(u) + \mathrm{O}_{\Gamma, \eta_0}\Big( (|I| + \eta)^{2/3} \frac{e^{5y/3}}{y^{2/3}} +  \frac{e^y}{\eta^2} 
\Big).
\end{align*}
\end{itemize}
\end{prop}

\begin{proof}
We approximate $\pi_\Gamma(I, f_{J, \eta'})$ 
using the smooth cutoff
count $\pi_\Gamma(g_{I, \eta}, f_{J, \eta'})$, with $0<\eta\leqslant\eta_0$ to be suitably chosen later.
These counts differ at most by the number of primitive classes $[\gamma]$ with $(\ell(\gamma),\hol(\gamma))\in (I^{-}_{\eta}\cup I^{+}_{\eta})\times J_{\eta'}$.
By Corollary \ref{short-range-upper-bound}, we have the bound 
	\begin{equation}\label{sharp-length-interval-bound}
	\begin{aligned}
	|\pi_\Gamma(I, f_{J, \eta'}) -\pi_\Gamma(g_{I, \eta}, f_{J, \eta'})| &
	\leqslant\pi_{\Gamma}(I^{-}_{\eta},J_{\eta'})+\pi_{\Gamma}(I^{+}_{\eta},J_{\eta'})\\
	& \ll_{\Gamma,\eta_0} \eta (|J| + \eta') \frac{e^{2y}}{y} + \frac{e^y}{y \eta^2} \log^{\ast} \frac{1}{\eta'} + \frac{e^y}{\eta'{}^2}.
	\end{aligned}
	\end{equation}
	
Combining the estimate of Lemma \ref{smooth-smooth-intervals} for $\pi_\Gamma( g_{I, \eta}, f_{J,\eta'})$ with the estimate \eqref{sharp-length-interval-bound} on the region of ambiguity, we have that
\begin{align*}
\pi_\Gamma(I, f_{J, \eta'}) &= \frac{|J|}{2\pi} \int_2^\infty g_{I, \eta}(u)\,\dd \varpi_\Gamma(u) + \mathrm{O}_{\Gamma,\eta_0}\Big(\eta(|J| + \eta')\frac{e^{2y}}{y} + \frac{e^y}{y \eta^2} \log^{\ast} \frac{1}{\eta'} + \frac{e^y}{\eta'{}^2} \Big)\\
&=\frac{|J|}{2\pi} \int_I\,\dd \varpi_\Gamma(u) + \mathrm{O}_{\Gamma,\eta_0}\Big(\eta(|J| + \eta')\frac{e^{2y}}{y} + \frac{e^y}{y \eta^2} \log^{\ast} \frac{1}{\eta'} + \frac{e^y}{\eta'{}^2} \Big).
\end{align*}
	
Taking, say, $\eta_0 =1$, the essentially optimal choice $\eta = \min\big(e^{-y/3} (\log^{\ast}\frac{1}{\eta'})^{1/3}/(|J| + \eta')^{1/3}, \eta_0 \big)$ completes the proof of (a).

The proof of (b) is similar: approximating $\pi_\Gamma(g_{I, \eta}, J)$ with the smooth count
$\pi_{\Gamma}(g_{I,\eta},f_{J,\eta'})$, with $0<\eta'\leqslant 2\pi$ to be suitably chosen later,
we have that $|\pi_\Gamma(g_{I, \eta}, J) - \pi_\Gamma(g_{I, \eta}, f_{J, \eta'})|
\leqslant\pi_{\Gamma}(I_{\eta},J^{-}_{\eta'})+\pi_{\Gamma}(I_{\eta},J^{+}_{\eta'})$,
so that combining Corollary~\ref{short-range-upper-bound} and Lemma \ref{smooth-smooth-intervals} we conclude
\[\pi_\Gamma(g_{I, \eta}, J) =  \frac{|J|}{2\pi} \int_{2}^\infty g_{I, \eta}(u)\,\dd \varpi_\Gamma(u)  + \mathrm{O}_{\Gamma,\eta_0}\Big( (|I| + \eta) \eta' \frac{e^{2y}}{y} + \frac{e^y}{y \eta^2} \log^{\ast}\frac1{\eta'} + \frac{e^y}{\eta'{}^2}\Big). \]
Choosing $\eta' = \min\big(y^{1/3} e^{-y/3}/(|I| + \eta)^{1/3}, 2\pi\big)$ completes the proof.
\end{proof}

Finally, we present our main theorem, which provides a count of length and holonomy in intervals $I$ and $J$, respectively. When $I = [0,y]$ and $J$ is fixed, this recovers Theorem \ref{sharpsharp}. However, when the lengths of $I$ and $J$ are shrinking, we have a significant improvement.

\begin{theorem}[Ambient short-range prime geodesic theorem]
\label{short-range-ambient-theorem}
Let $\Gamma<\PSL_2\mathbb{C}$ be a discrete, co-compact, torsion-free subgroup. Then, for any intervals $I=[y',y]$ ($0\leq  y'< y$) and $J\subseteq\mathbb{R}/2\pi\mathbb{Z}$,
\begin{align*}
\pi_{\Gamma}(I,J)
&:=\big|\big\{[\gamma]^P:(\ell(\gamma),\hol(\gamma))\in I\times J \big\}\big|\\
&=\iint_{I\times J}\dd\varpi_{\Gamma}(u)\frac{\dd\theta}{2\pi}+\mathrm{O}_{\Gamma}\Big( (|I| + |J|)^{2/3} \frac{e^{5y/3}}{y^{2/3}} + \frac{e^{3y/2}}{y^{1/2}}\Big),
\end{align*}
where $\varpi_\Gamma$ is the density \eqref{ei-varpi}.
\end{theorem}

\begin{proof}
We use the smooth count $\pi_{\Gamma}(g_{I,\eta},f_{J,\eta'})$, with parameters $0<\eta\leqslant\eta_0$ and $0<\eta'\leqslant 2\pi$ to be suitably chosen later, to approximate the sharp cutoff count $\pi_\Gamma(I,J)$. 
Using Corollary~\ref{short-range-upper-bound} to estimate the boundary terms,
as in the proof of Proposition \ref{sharp-length-intervals-prop}, we obtain
\begin{align*}
|\pi_\Gamma(I, J) - \pi_\Gamma(g_{I, \eta}, f_{J, \eta'})|
&\leqslant\pi_{\Gamma}(I^{-}_{\eta},J_{\eta'})+\pi_{\Gamma}(I^{+}_{\eta},J_{\eta'})+
\pi_{\Gamma}(I_{\eta},J^{-}_{\eta'})+\pi_{\Gamma}(I_{\eta},J^{+}_{\eta'})\\
&\ll_{\Gamma} (\eta |J| + \eta' |I| + \eta \eta') \frac{e^{2y}}{y} +\frac{e^y}{y\eta^2}\log^{\ast}\frac1{\eta'}+\frac{e^y}{\eta'{}^2}.
\end{align*}
	
Then, using Lemma \ref{smooth-smooth-intervals} for $\pi_\Gamma(g_{I, \eta}, f_{J, \eta'})$, we achieve the estimate
\begin{align}
\pi_\Gamma(I, J) &=\frac{|J|}{2\pi}\int_2^{\infty}g_{I,\eta}(u)\,\dd\varpi_{\Gamma}(u)+ \mathrm{O}_{\Gamma,\eta_0}\Big( (\eta |J| + \eta' |I| + \eta \eta') \frac{e^{2y}}{y} +  \frac{e^y}{y\eta^2}\log^{\ast}\frac1{\eta'}+\frac{e^y}{\eta'{}^2} \Big)\nonumber\\
\label{eta-etaprime}
&= \frac{|J|}{2\pi} \int_I\,\dd\varpi_{\Gamma}(u) + \mathrm{O}_{\Gamma,\eta_0}\Big( (\eta |J| + \eta' |I| + \eta \eta') \frac{e^{2y}}{y} +  \frac{e^y}{y\eta^2}\log^{\ast}\frac1{\eta'}+\frac{e^y}{\eta'{}^2} \Big).
\end{align}
	
To obtain the result, we use the essentially optimal choices
\[\eta = \min\Big(\frac{y^{1/3} e^{-y/3}}{|J|^{1/3}}, y^{1/4}e^{-y/4},\eta_0\Big), \quad \eta' = \min\Big(\frac{y^{1/3} e^{-y/3}}{|I|^{1/3}}, y^{1/4}e^{-y/4},2\pi\Big),\]
taking a fixed $\eta_0=1$. Here we note that a brief comparison of the three latter summands in the error term in \eqref{eta-etaprime} shows that indeed the term $e^{3y/2}/y^{1/2}$ in Theorem~\ref{short-range-ambient-theorem} is the best possible following \eqref{eta-etaprime}.
\end{proof}

\begin{remark}
\label{concluding-remark}
As already remarked, Theorem~\ref{short-range-ambient-theorem} recovers the long-range Theorem~\ref{sharpsharp} and its Corollary~\ref{ambient-short-range}. It provides a substantial improvement as soon as both $I$ and $J$ are short, which is particularly strong if the total boundary length $\asymp |I|+|J|$ is favorably small. For example, if $|I|\asymp |J|$, Theorem~\ref{short-range-ambient-theorem} gives a power-saving asymptotic as long as $|I|\asymp |J|\gg e^{(-1/4+\delta)y}$, with the error term potentially as small as $\mathrm{O}_{\Gamma,\delta}(e^{(3/2+\delta)y})$. In general, Theorem~\ref{short-range-ambient-theorem} yields a power-saving asymptotic whenever $|I\times J|^3/(|I|+|J|)^2\asymp |I\times J|\cdot\min(|I|^2,|J|^2)\gg e^{(-1+\delta)y}$ for some $\delta>0$.

Proposition~\ref{sharp-length-intervals-prop} similarly improves upon Theorem~\ref{SarnakTheorem} and Proposition~\ref{SharpHolSmoothLength}. For example, when sampling geodesics with a sharp cutoff in length in an interval of size $|I|\asymp 1$ and a ``mild'' holonomy cutoff with $\eta'\asymp |J|$, Proposition~\ref{sharp-length-intervals-prop} improves upon Theorem~\ref{SarnakTheorem} for all $e^{(-1/3+\delta)y}\leqslant |J|\leqslant e^{-\delta y}$, with the error term as good as $\mathrm{O}_{\Gamma}(e^{3y/2})$ for $|I|\asymp 1$ and $\eta',|J|\asymp e^{-y/4}$.
\end{remark}

\bibliographystyle{amsplain}
\bibliography{biblio} 

\end{document}